\newcommand{\noun}[1]{\textsc{#1}}
\numberwithin{equation}{section}
\numberwithin{figure}{section}
\theoremstyle{plain}
\newtheorem{thm}{\protect\theoremname}
  \theoremstyle{definition}
  \newtheorem{defn}[thm]{\protect\definitionname}
  \theoremstyle{remark}
  \newtheorem{rem}[thm]{\protect\remarkname}
  \theoremstyle{plain}
  \newtheorem{prop}[thm]{\protect\propositionname}
  \theoremstyle{definition}
  \newtheorem{example}[thm]{\protect\examplename}
  \theoremstyle{plain}
  \newtheorem{lem}[thm]{\protect\lemmaname}
\renewcommand*{\epsilon}{\varepsilon}
\tikzset{node distance=2.5cm, auto}
\newcommand{\myar}[2]{\ar^-{#1}[#2]}
\newcommand{\myard}[2]{\ar_-{#1}[#2]}
\def\matrixobject@{%
  \edef \next@{={\DirectionfromtheDirection@ }}%
  \expandafter \toks@ \next@ \plainxy@
  \let\xy@@ix@=\xyq@@toksix@
  \xyFN@ \OBJECT@}
\let\xy@entry@@norm=\entry@@norm
\def\entry@@norm@patched{%
  \let\object@=\matrixobject@
  \xy@entry@@norm }
\newcommand{\twocong}[2][0.5]{\ar@{}[#2] \save ?(#1)*{\cong}\restore}
\newcommand{\twoeq}[2][0.5]{\ar@{}[#2] \save ?(#1)*{=}\restore}
\newcommand{\ltwocell}[3][0.5]{\ar@{}[#2] \ar@{=>}?(#1)+/r 0.2cm/;?(#1)+/l 0.2cm/^{#3}}
\newcommand{\rtwocell}[3][0.5]{\ar@{}[#2] \ar@{=>}?(#1)+/l 0.2cm/;?(#1)+/r 0.2cm/^{#3}}
\newcommand{\utwocell}[3][0.5]{\ar@{}[#2] \ar@{=>}?(#1)+/d  0.2cm/;?(#1)+/u 0.2cm/_{#3}}
\newcommand{\dtwocell}[3][0.5]{\ar@{}[#2] \ar@{=>}?(#1)+/u  0.2cm/;?(#1)+/d 0.2cm/^{#3}}
\newcommand{\ultwocell}[3][0.5]{\ar@{}[#2] \ar@{=>}?(#1)+/dr  0.2cm/;?(#1)+/ul 0.2cm/^{#3}}
\newcommand{\urtwocell}[3][0.5]{\ar@{}[#2] \ar@{=>}?(#1)+/dl  0.2cm/;?(#1)+/ur 0.2cm/^{#3}}
\newcommand{\dltwocell}[3][0.5]{\ar@{}[#2] \ar@{=>}?(#1)+/ur  0.2cm/;?(#1)+/dl 0.2cm/^{#3}}
\newcommand{\drtwocell}[3][0.5]{\ar@{}[#2] \ar@{=>}?(#1)+/ul  0.2cm/;?(#1)+/dr 0.2cm/^{#3}}
  \providecommand{\definitionname}{Definition}
  \providecommand{\examplename}{Example}
  \providecommand{\lemmaname}{Lemma}
  \providecommand{\propositionname}{Proposition}
  \providecommand{\remarkname}{Remark}
\providecommand{\theoremname}{Theorem}
\begin{document}

\title{Universal Properties of Bicategories of Polynomials}

\author{Charles Walker}

\keywords{polynomial functors}

\subjclass[2000]{18D05}

\address{Department of Mathematics, Macquarie University, NSW 2109, Australia}

\email{charles.walker1@mq.edu.au}

\date{\today}
\begin{abstract}
We establish the universal properties of the bicategory of polynomials,
considering both cartesian and general morphisms between these polynomials.
A direct proof of these universal properties would be impractical
due to the complicated coherence conditions arising from polynomial
composition; however, in this paper we avoid most of these coherence
conditions using the properties of generic bicategories. 

In addition, we give a new proof of the universal properties of the
bicategory of spans, and also establish the universal properties of
the bicategory of spans with invertible 2-cells; showing how these
properties may be used to describe the universal properties of polynomials. 
\end{abstract}

\maketitle
\tableofcontents{}

\section{Introduction}

In this paper we are interested in two constructions on suitable categories
$\mathcal{E}$: the bicategory of spans $\mathbf{Span}\left(\mathcal{E}\right)$
as introduced by B{\'e}nabou \cite{ben1967}, and the bicategory
of polynomials $\mathbf{Poly}\left(\mathcal{E}\right)$ as introduced
by Gambino and Kock \cite{gambinokock}, and further studied by Weber
\cite{weber} (all to be reviewed in Section \ref{background}). Here
we wish to study the universal properties of these constructions;
that is, for an arbitrary bicategory $\mathscr{C}$ we wish to know
what it means to give a pseudofunctor $\mathbf{Span}\left(\mathcal{E}\right)\to\mathscr{C}$
or $\mathbf{Poly}\left(\mathcal{E}\right)\to\mathscr{C}$. 

In the case of spans, these results have already been established.
In particular, given any category $\mathcal{E}$ with pullbacks, one
can form a bicategory denoted $\mathbf{Span}\left(\mathcal{E}\right)$
whose objects are those of $\mathcal{E}$ and 1-cells are diagrams
in $\mathcal{E}$ of the form below
\[
\xymatrix@=1.5em{ & \bullet\ar[dl]_{s}\ar[rd]^{t}\\
\bullet &  & \bullet
}
\]
called spans. The universal property of this construction admits a
simple description since for every morphism $f$ in $\mathcal{E}$
we have adjunctions
\[
\vcenter{\hbox{\xymatrix@=1.5em{ & \bullet\ar[dl]_{\textnormal{id}}\ar[rd]^{f}\\
\bullet &  & \bullet
}
}}\qquad\dashv\qquad\vcenter{\hbox{\xymatrix@=1.5em{ & \bullet\ar[dl]_{f}\ar[rd]^{\textnormal{id}}\\
\bullet &  & \bullet
}
}}
\]
in $\mathbf{Span}\left(\mathcal{E}\right)$, and these adjunctions
generate all of $\mathbf{Span}\left(\mathcal{E}\right)$.

Indeed, it was proven by Hermida \cite[Theorem A.2]{hermida} that
composing with the canonical embedding $\mathcal{E}\hookrightarrow\mathbf{Span}\left(\mathcal{E}\right)$
describes an equivalence
\[
\begin{array}{c}
\underline{\textnormal{pseudofunctors }\mathbf{Span}\left(\mathcal{E}\right)\to\mathscr{C}}\\
\textnormal{Beck pseudofunctors }\mathcal{E}\to\mathscr{C}
\end{array}
\]
where a pseudofunctor $F_{\Sigma}\colon\mathcal{E}\to\mathscr{C}$
is Beck if for every morphism $f$ in $\mathcal{E}$ the 1-cell $F_{\Sigma}f$
has a right adjoint $F_{\Delta}f$ in $\mathscr{C}$ (such an $F_{\Sigma}$
is also known as a sinister pseudofunctor), and if the induced pair
of pseudofunctors 
\[
F_{\Sigma}\colon\mathcal{E}\to\mathscr{C},\qquad F_{\Delta}\colon\mathcal{E}^{\textnormal{op}}\to\mathscr{C}
\]
satisfy a Beck-Chevalley condition. A natural question to then ask
is what these sinister pseudofunctors correspond to when the Beck\textendash Chevalley
condition is dropped. This question was solved by Dawson, Paré, and
Pronk \cite[Theorem 2.15]{unispans} who showed that composing with
the canonical embedding describes an equivalence
\[
\begin{array}{c}
\underline{\textnormal{gregarious functors }\mathbf{Span}\left(\mathcal{E}\right)\to\mathscr{C}}\\
\textnormal{sinister pseudofunctors }\mathcal{E}\to\mathscr{C}
\end{array}
\]
where gregarious functors are the adjunction preserving normal\footnote{Here ``normal'' means the unit constraints are invertible.}
oplax functors. 

An important special case of this is when $\mathscr{C}=\mathbf{Cat}$,
where one may consider sinister pseudofunctors $\mathcal{E}\to\mathbf{Cat}$,
or equivalently cosinister\footnote{Here ``cosinister'' means arrows are sent to right adjoint 1-cells
instead of left adjoint 1-cells. This is the $F_{\Delta}$ of such
a pair $F_{\Sigma}$-$F_{\Delta}$.} pseudofunctors $\mathcal{E}^{\textnormal{op}}\to\mathbf{Cat}$. In
this case we recover the equivalence
\[
\begin{array}{c}
\underline{\textnormal{gregarious functors }\mathbf{Span}\left(\mathcal{E}\right)\to\mathbf{Cat}}\\
\textnormal{bifibrations over }\mathcal{E}
\end{array}
\]
Note also that on $\mathbf{Cat}/\mathcal{E}$ there is a KZ pseudomonad
$\varGamma_{\mathcal{E}}$ for opfibrations and a coKZ pseudomonad
$\varUpsilon_{\mathcal{E}}$ for fibrations. This yields (via a pseudo-distributive
law) the pseudomonad $\varGamma_{\mathcal{E}}\varUpsilon_{\mathcal{E}}$
for bifibrations satisfying the Beck\textendash Chevalley condition
\cite{glehn} (also known as fibrations with sums). The above equivalence
then restricts to
\[
\begin{array}{c}
\underline{\textnormal{pseudofunctors }\mathbf{Span}\left(\mathcal{E}\right)\to\mathbf{Cat}}\\
\textnormal{fibrations with sums over }\mathcal{E}
\end{array}
\]
An archetypal example of this is the codomain fibration over $\mathcal{E}$
corresponding to the canonical pseudofunctor $\mathbf{Span}\left(\mathcal{E}\right)\to\mathbf{Cat}$
defined by 
\[
\vcenter{\hbox{\xymatrix@=1.5em{ & T\ar[dl]_{s}\ar[rd]^{t}\\
X &  & Y
}
}}\;\;\mapsto\;\;\vcenter{\hbox{\xymatrix@=1.5em{\mathcal{E}/X\myar{\Delta_{s}}{r} & \mathcal{E}/T\myar{\Sigma_{t}}{r} & \mathcal{E}/Y}
}}
\]
where for every morphism $f$ in $\mathcal{E}$, the functor $\Sigma_{f}$
denotes composition with $f$, and the functor $\Delta_{f}$ denotes
pulling back along $f$.

When considering polynomials it is convenient to assume some extra
structure on $\mathcal{E}$. In particular, we will take $\mathcal{E}$
to be a category with finite limits, such that for each morphism $f$
in $\mathcal{E}$ the ``pullback along $f$'' functor $\Delta_{f}$
has a right adjoint $\Pi_{f}$. For such a category $\mathcal{E}$
(known as a locally cartesian closed category) one can form a bicategory
denoted $\mathbf{Poly}\left(\mathcal{E}\right)$ whose objects are
those of $\mathcal{E}$ and 1-cells are diagrams in $\mathcal{E}$
of the form below
\[
\xymatrix@=1.5em{ & \bullet\ar[dl]_{s}\ar[r]^{p} & \bullet\ar[rd]^{t}\\
\bullet &  &  & \bullet
}
\]
called polynomials\footnote{The bicategory of polynomials can be defined on any category $\mathcal{E}$
with pullbacks \cite{weber}; however, we will assume local cartesian
closure for simplicity.}. One can also form a bicategory $\mathbf{Poly}_{c}\left(\mathcal{E}\right)$
with the same objects and 1-cells by being more restrictive on the
2-cells (that is, only taking ``cartesian'' morphisms of polynomials).

The purpose of this paper is to describe the universal properties
of these two bicategories of polynomials.

Similar to the case of spans, the universal property of $\mathbf{Poly}\left(\mathcal{E}\right)$
admits a simple description since for every morphism $f$ in $\mathcal{E}$
we have adjunctions
\begin{equation}
\vcenter{\hbox{\xymatrix@=1.5em{ & \bullet\ar[dl]_{\textnormal{id}}\ar[r]^{\textnormal{id}} & \bullet\ar[rd]^{f}\\
\bullet &  &  & \bullet
}
}}\;\;\dashv\;\;\vcenter{\hbox{\xymatrix@=1.5em{ & \bullet\ar[dl]_{f}\ar[r]^{\textnormal{id}} & \bullet\ar[rd]^{\textnormal{id}}\\
\bullet &  &  & \bullet
}
}}\;\;\dashv\;\;\vcenter{\hbox{\xymatrix@=1.5em{ & \bullet\ar[dl]_{\textnormal{id}}\ar[r]^{f} & \bullet\ar[rd]^{\textnormal{id}}\\
\bullet &  &  & \bullet
}
}}\label{adjtrip}
\end{equation}
in $\mathbf{Poly}\left(\mathcal{E}\right)$, and these adjunctions
generate all of $\mathbf{Poly}\left(\mathcal{E}\right)$ (to be shown
in Proposition \ref{polyadjprop}). Using this fact, we show that
in the case of polynomials with general 2-cells, composition with
the embedding $\mathcal{E}\hookrightarrow\mathbf{Poly}\left(\mathcal{E}\right)$
describes the equivalence
\begin{equation}
\begin{array}{c}
\textnormal{ pseudofunctors }\mathbf{Poly}\left(\mathcal{E}\right)\to\mathscr{C}\\
\overline{\textnormal{DistBeck pseudofunctors }\mathcal{E}\to\mathscr{C}}
\end{array}\label{polygeneraluni}
\end{equation}
where a pseudofunctor $F_{\Sigma}\colon\mathcal{E}\to\mathscr{C}$
is DistBeck if for every morphism $f$ in $\mathcal{E}$ the 1-cell
$F_{\Sigma}f$ has two successive right adjoints $F_{\Delta}f$ and
$F_{\Pi}f$ (such an $F_{\Sigma}$ is called a 2-sinister pseudofunctor),
and if the induced triple of pseudofunctors
\[
F_{\Sigma}\colon\mathcal{E}\to\mathscr{C},\qquad F_{\Delta}\colon\mathcal{E}^{\textnormal{op}}\to\mathscr{C},\qquad F_{\Pi}\colon\mathcal{E}\to\mathscr{C}
\]
satisfies the earlier Beck-Chevalley condition on the pair $F_{\Sigma}$
and $F_{\Delta}$, in addition to a ``distributivity condition''
on the pair $F_{\Sigma}$ and $F_{\Pi}$. Forgetting the distributivity
condition yields the notion of a 2-Beck pseudofunctor, so that \eqref{polygeneraluni}
may be seen as a restriction of an equivalence
\[
\begin{array}{c}
\underline{\textnormal{gregarious functors }\mathbf{Poly}\left(\mathcal{E}\right)\to\mathscr{C}}\\
\textnormal{2-Beck pseudofunctors }\mathcal{E}\to\mathscr{C}
\end{array}
\]

Similar to earlier, an important special case of this is when $\mathscr{C}=\mathbf{Cat}$,
where one recovers the equivalence
\[
\begin{array}{c}
\textnormal{gregarious functors }\mathbf{Poly}\left(\mathcal{E}\right)\to\mathbf{Cat}\\
\overline{\textnormal{fibrations with sums and products over }\mathcal{E}}
\end{array}
\]

Note also that on $\mathbf{Fib}\left(\mathcal{E}\right)$ there is
a KZ pseudomonad $\Sigma_{\mathcal{E}}$ for fibrations with sums,
and a coKZ pseudomonad $\Pi_{\mathcal{E}}$ for fibrations with products.
This yields (via a pseudo-distributive law) a pseudomonad $\Sigma_{\mathcal{E}}\Pi_{\mathcal{E}}$
for fibrations with sums and products which satisfy a distributivity
condition \cite{glehn}. Here we recover the equivalence
\[
\begin{array}{c}
\textnormal{pseudofunctors }\mathbf{Poly}\left(\mathcal{E}\right)\to\mathbf{Cat}\\
\overline{\textnormal{distributive fibrations with sums and products over }\mathcal{E}}
\end{array}
\]
The codomain fibration is again an archetypal example of this, with
the corresponding canonical pseudofunctor $\mathbf{Poly}\left(\mathcal{E}\right)\to\mathbf{Cat}$
being defined by
\[
\vcenter{\hbox{\xymatrix@=1.5em{ & E\ar[dl]_{s}\ar[r]^{p} & B\ar[rd]^{t}\\
I &  &  & J
}
}}\;\;\mapsto\;\;\vcenter{\hbox{\xymatrix@=1.5em{\mathcal{E}/I\myar{\Delta_{s}}{r} & \mathcal{E}/E\myar{\Pi_{p}}{r} & \mathcal{E}/B\myar{\Sigma_{t}}{r} & \mathcal{E}/J}
}}
\]
which is how one assigns a polynomial to a polynomial functor.

Another example of this situation is given by taking $\mathcal{E}$
to be a regular locally cartesian closed category. In this case we
have the 2-Beck pseudofunctor $\textnormal{Sub}\colon\mathcal{E}\to\mathbf{Cat}$
which sends a morphism $f\colon X\to Y$ in $\mathcal{E}$ to the
existential quantifier $\exists_{f}\colon\textnormal{Sub}\left(X\right)\to\textnormal{Sub}\left(Y\right)$
mapping subobjects of $X$ to those of $Y$, which has the two successive
right adjoints $\Delta_{f}$ ``pullback along $f$'' and $\forall_{f}$
``universal quantification at $f$'', thus giving a gregarious functor
$\mathbf{Poly}\left(\mathcal{E}\right)\to\mathbf{Cat}$ defined by
the assignment
\[
\vcenter{\hbox{\xymatrix@=1.5em{ & E\ar[dl]_{s}\ar[r]^{p} & B\ar[rd]^{t}\\
I &  &  & J
}
}}\;\;\mapsto\;\;\vcenter{\hbox{\xymatrix@=1.5em{\textnormal{Sub}\left(I\right)\myar{\Delta_{s}}{r} & \textnormal{Sub}\left(E\right)\myar{\forall_{p}}{r} & \textnormal{Sub}\left(B\right)\myar{\exists_{t}}{r} & \textnormal{Sub}\left(J\right)}
}}
\]
The distributivity condition here then amounts to asking that $\mathcal{E}$
satisfies the internal axiom of choice.

With only cartesian morphisms we do not have the adjunctions on the
right in \eqref{adjtrip} since the units and counits of such adjunctions
are not cartesian in general, thus making the universal property of
$\mathbf{Poly}_{c}\left(\mathcal{E}\right)$ more complicated to state.
The universal property of this construction is described as an equivalence
\[
\begin{array}{c}
\underline{\textnormal{pseudofunctors }\mathbf{Poly}_{c}\left(\mathcal{E}\right)\to\mathscr{C}}\\
\textnormal{DistBeck triple }\mathcal{E}\to\mathscr{C}
\end{array}
\]
where a DistBeck triple $\mathcal{E}\to\mathscr{C}$ is a triple of
pseudofunctors 
\[
F_{\Sigma}\colon\mathcal{E}\to\mathscr{C},\qquad F_{\Delta}\colon\mathcal{E}^{\textnormal{op}}\to\mathscr{C},\qquad F_{\otimes}\colon\mathcal{E}\to\mathscr{C}
\]
such $F_{\Sigma}f\dashv F_{\Delta}f$ for all morphisms $f$ in $\mathcal{E}$,
with a Beck\textendash Chevalley condition satisfied for the pair
$F_{\Sigma}$ and $F_{\Delta}$, for which $F_{\Delta}$ and $F_{\otimes}$
are related via invertible Beck\textendash Chevalley coherence data
(as we do not have adjunctions $F_{\Delta}f\dashv F_{\otimes}f$ this
data does not come for free and must be given instead, subject to
suitable coherence axioms), such that the pair $F_{\Sigma}$ and $F_{\otimes}$
satisfy a distributivity condition as before\footnote{The distributivity data need not be given as it may be constructed
using the $F_{\Delta}$-$F_{\otimes}$ Beck coherence data and the
adjunctions $F_{\Sigma}f\dashv F_{\Delta}f$.}. There are also weakened versions of the universal property of $\mathbf{Poly}_{c}\left(\mathcal{E}\right)$
which arise from dropping these conditions.

An example of this is given by taking $\mathcal{E}$ to be the category
of finite sets $\mathbf{FinSet}$ and $\mathscr{C}$ to be the 2-category
of small categories $\mathbf{Cat}$. Taking $\left(\mathcal{A},\otimes,I\right)$
to be a symmetric monoidal category such that $\mathcal{A}$ has finite
coproducts, we can assign to any finite set $n$ the category $\mathcal{A}^{n}$
and to any morphism $f\colon m\to n$ the  functors
\[
\begin{aligned}\textnormal{lan}_{f}\colon & \mathcal{A}^{m}\to\mathcal{A}^{n}, &  & \left(a_{i}\colon i\in m\right)\mapsto\left(\Sigma{}_{x\in f^{-1}\left(j\right)}\;a_{x}\colon j\in n\right)\\
\left(-\right)\circ f\colon & \mathcal{A}^{n}\to\mathcal{A}^{m}, &  & \left(a_{j}\colon j\in n\right)\mapsto\left(a_{f\left(i\right)}\colon i\in m\right)\\
\otimes_{f}\colon & \mathcal{A}^{m}\to\mathcal{A}^{n}, &  & \left(a_{i}\colon i\in m\right)\mapsto\left(\otimes_{x\in f^{-1}\left(j\right)}\;a_{x}\colon j\in n\right)
\end{aligned}
\]
This gives the data of a Beck triple (that is a DistBeck triple without
requiring the distributivity condition). The distributivity condition
here holds precisely when the functor $X\otimes\left(-\right)\colon\mathcal{A}\to\mathcal{A}$
preserves finite coproducts for all $X\in\mathcal{A}$.

The reader should note that proving the universal properties concerning
the polynomial construction is much more complex than that of the
span construction. This is since composition of polynomials is significantly
more complicated; this is especially evident in calculations involving
associativity of polynomial composition being respected by an oplax
or pseudofunctor, or calculations involving horizontal composition
of general polynomial morphisms.

Fortunately, we are able to avoid these calculations to some extent.
This is done by exploiting the fact that both $\mathbf{Span}\left(\mathcal{E}\right)$
and $\mathbf{Poly}_{c}\left(\mathcal{E}\right)$ are ``generic bicategories''
\cite{WalkerGeneric}, that is a bicategory $\mathscr{A}$ with the
property that each composition functor 
\[
\circ_{X,Y,Z}\colon\mathscr{A}_{Y,Z}\times\mathscr{A}_{X,Y}\to\mathscr{A}_{X,Z}
\]
 admits generic factorisations. The main result of \cite{WalkerGeneric}
shows that oplax functors out of such bicategories admit a much simpler
description; thus allowing for a simple description of oplax functors
out $\mathbf{Span}\left(\mathcal{E}\right)$ and $\mathbf{Poly}_{c}\left(\mathcal{E}\right)$.
A problem here is that the bicategory $\mathbf{Poly}\left(\mathcal{E}\right)$
does not enjoy this property. However, as $\mathbf{Poly}_{c}\left(\mathcal{E}\right)$
embeds into $\mathbf{Poly}\left(\mathcal{E}\right)$ and both bicategories
have the same composition the universal property of the former will
assist in proving the latter.

In Section \ref{background} we give the necessary background for
this paper. We recall the definitions and basic properties of the
bicategories of spans and polynomials, the notions of lax, oplax and
gregarious functors, the basic properties of the mates correspondence,
and the basic properties of generic bicategories.

In Section \ref{unispans} we give a proof of the universal properties
of spans using the properties of generic bicategories. This is to
give a complete and detailed proof of these properties demonstrating
our method, before applying it the  more complicated setting of polynomials
later on.

In Section \ref{unispaniso} we give a proof of the universal properties
of spans with invertible 2-cells. This is necessary since the universal
properties of polynomials with cartesian 2-cells will be described
in terms of this property. 

In Section \ref{unipolycart} we give a proof of the universal properties
of polynomials with cartesian 2-cells. It is in this section that
our method is of the most use; indeed in our proof we completely avoid
coherences involving composition of distributivity pullbacks (the
worst coherence conditions which would arise in a direct proof).

In Section \ref{unipoly} we give a proof of the universal properties
of polynomials with general 2-cells, by using the corresponding properties
for polynomials with cartesian 2-cells and checking some additional
coherence conditions concerning naturality with respect to these more
general 2-cells.

\section{Background\label{background}}

In this section we give the necessary background knowledge for this
paper. 

\subsection{The bicategory of spans}

Before studying the bicategory of polynomials we will study the simpler
and more well known construction of the bicategory of spans, as introduced
by B{\'e}nabou \cite{ben1967}.
\begin{defn}
Suppose we are given a category $\mathcal{E}$ with chosen pullbacks.
We may then form \emph{bicategory of spans} in $\mathcal{E}$, denoted
$\mathbf{Span}\left(\mathcal{E}\right)$, with objects those of $\mathcal{E}$,
1-cells $A\nrightarrow B$ given diagrams in $\mathcal{E}$ of the
form
\[
\xymatrix@=1em{ & X\ar[rd]^{q}\ar[ld]_{p}\\
A &  & B
}
\]
called spans, composition of 1-cells given by taking the chosen pullback
\[
\xymatrix@=1em{ &  & \bullet\ar[rd]^{\pi_{2}}\ar[dl]_{\pi_{1}}\\
 & X\ar[rd]^{q}\ar[ld]_{p} &  & Y\ar[rd]^{s}\ar[ld]_{r}\\
A &  & B &  & C
}
\]
and 2-cells $\nu$ given by those morphisms between the vertices of
two spans which yield commuting diagrams of the form 
\[
\xymatrix@=1em{ & X\ar[rd]^{q}\ar[ld]_{p}\ar[dd]^{\nu}\\
A &  & B\\
 & Y\ar[ru]_{r}\ar[lu]^{s}
}
\]
The identity 1-cells are given by identity spans $\xymatrix@=1em{X & X\ar[r]^{1_{X}}\ar[l]_{1_{X}} & X}
$ and composition extends to 2-cells by the universal property of pullbacks.
The essential uniqueness of the limit of a diagram 
\[
\xymatrix@=1em{ & X\ar[rd]^{q}\ar[ld]_{p} &  & Y\ar[rd]^{s}\ar[ld]_{r} &  & Z\ar[rd]^{v}\ar[ld]_{u}\\
A &  & B &  & C &  & D
}
\]
yields the associators, making $\mathbf{Span}\left(\mathcal{E}\right)$
into a bicategory.
\end{defn}

We denote by $\mathbf{Span}_{\textnormal{iso}}\left(\mathcal{E}\right)$
the bicategory as defined above, but only taking the invertible 2-cells.

\subsection{The bicategory of polynomials}

In the earlier defined bicategory of spans the morphisms may be viewed
as multivariate linear maps (matrices). In this subsection we recall
the bicategory of polynomials, whose morphisms may be viewed as multivariate
polynomials, and whose study has applications in areas including theoretical
computer science (under the name of containers and indexed containers
\cite{containers}) and the theory of $W$-types \cite{martinlof,wellfoundedtrees}.

Before we can define this bicategory we must recall the notion of
distributivity pullback as given by Weber \cite{weber}.
\begin{defn}
Given two composable morphisms $u\colon X\to A$ and $f\colon A\to B$
in a category $\mathcal{E}$ with pullbacks, we say that:
\begin{enumerate}
\item a \emph{pullback around} $\left(f,u\right)$ is a diagram 
\[
\xymatrix@=1.5em{T\ar[r]^{p}\ar[d]_{q} & X\ar[r]^{u} & A\ar[d]^{f}\\
Y\ar[rr]_{r} &  & B
}
\]
such that the outer rectangle is a pullback, and a \emph{morphism
of pullbacks around }$\left(f,u\right)$ is a pair of morphisms $s\colon T\to T'$
and $t\colon Y\to Y'$ such that $p's=p$, $q's=tq$ and $r=r't$;
\item a \emph{distributivity pullback} around $\left(f,u\right)$ is a terminal
object in the category of pullbacks around $\left(f,u\right)$.
\end{enumerate}
\end{defn}

We also recall the notion of an exponentiable morphism, a condition
which ensures the existence of such distributivity pullbacks.
\begin{defn}
We say a morphism $f\colon A\to B$ in a category $\mathcal{E}$ with
pullbacks is \emph{exponentiable} if the ``pullback along $f$''
functor $\Delta_{f}\colon\mathcal{E}/B\to\mathcal{E}/A$ has a right
adjoint. We will denote this right adjoint by $\Pi_{f}$ when it exists.
\end{defn}

\begin{rem}
Note that such an $f$ is exponentiable if and only if for every $u$
there exists a distributivity pullback around $\left(f,u\right)$
\cite{weber}.
\end{rem}

The following diagrams are to be the morphisms in the bicategory of
polynomials. 
\begin{defn}
A \emph{polynomial }$P\colon I\nrightarrow J$ in a category $\mathcal{E}$
with pullbacks is a diagram of the form
\[
\xymatrix@=1em{ & E\ar[r]^{p}\ar[ld]_{s} & B\ar[rd]^{t}\\
I &  &  & J
}
\]
where $p$ is exponentiable.
\end{defn}

We will also need the following universal property of polynomial composition.
\begin{prop}
\label{unipolycomp}\cite[Prop. 3.1.6]{weber} Suppose we are given
two polynomials $P\colon I\nrightarrow J$ and $Q\colon J\nrightarrow K$.
Consider a category with objects given by commuting diagrams of the
form
\[
\xymatrix@=1em{ &  & A_{1}\ar[dl]\ar[r] & A_{2}\ar[dl]\ar[dr]\ar[r] & A_{3}\ar[dr]\\
 & E\ar[r]\ar[ld] & B\ar[rd] &  & M\ar[r]\ar[ld] & N\ar[rd]\\
I &  &  & J &  &  & K
}
\]
for which the left and right squares are pullbacks (but not necessarily
the middle), and morphisms given by triples $\left(A_{i}\to B_{i}\colon i=1,2,3\right)$
rendering commutative the diagram
\[
\xymatrix@=1em{ &  & A_{1}\ar@{->}[d]\ar[r]\ar@{->}[ddl] & A_{2}\ar@{->}[d]\ar[r]\ar@{-->}[ddl]\ar@{-->}[ddr] & A_{3}\ar@{->}[d]\ar@{->}[ddr]\\
 &  & B_{1}\ar[dl]\ar[r] & B_{2}\ar[dl]\ar[dr]\ar[r] & B_{3}\ar[dr]\\
 & E\ar[r]\ar[ld] & B\ar[rd] &  & M\ar[r]\ar[ld] & N\ar[rd]\\
I &  &  & J &  &  & K
}
\]
Then in this category, the outside composite in the diagram formed
below (which is a polynomial $I\nrightarrow K$)
\begin{equation}
\xymatrix@=1em{ & \bullet\ar[dd]\ar[rr] &  & \bullet\ar[d]\ar[rr] &  & \bullet\ar[dd]\\
 &  & \ar@{}[]|-{\textnormal{pb}} & \bullet\ar[rd]\ar[ld] & \ar@{}[]|-{\textnormal{dpb}}\\
 & E\ar[r]^{p}\ar[ld]_{s} & B\ar[rd]^{t} & \ar@{}[]|-{\textnormal{pb}} & M\ar[r]^{q}\ar[ld]_{u} & N\ar[rd]^{v} &  &  & \;\\
I &  &  & J &  &  & K
}
\label{polycompdiagram}
\end{equation}
is a terminal object.
\end{prop}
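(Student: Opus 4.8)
The plan is to extract the three universal properties used to build the composite in \eqref{polycompdiagram} and feed an arbitrary object of the category through them. Write $D:=B\times_J M$ for the lower pullback, with projections $\pi_B\colon D\to B$ and $\pi_M\colon D\to M$. The composite is then assembled in three stages: the pullback $D$; a distributivity pullback around $\left(q,\pi_M\right)$ (which exists since $q$ is exponentiable) producing apexes $C_2$ over $D$ and $C_3$ over $N$, together with maps $C_2\to C_3$ and $C_3\to N$; and finally the pullback $C_1:=E\times_B C_2$ of $p$ along $C_2\to D\xrightarrow{\pi_B}B$. I would first confirm that $\left(C_1,C_2,C_3\right)$ is an object of the stated category: its left square is a pullback by construction, its right square is a pullback because the outer rectangle of a distributivity pullback is a pullback (so $C_2=M\times_N C_3$), and its middle square commutes because $C_2\to D$ factors through $B\times_J M$.

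Now fix an arbitrary object with vertices $A_1,A_2,A_3$ and structure maps to $E,B,M,N$, and build the comparison morphism from the inside out. The object's commutativity condition $t\circ\left(A_2\to B\right)=u\circ\left(A_2\to M\right)$ is precisely what induces a map $\sigma\colon A_2\to D$ into $D=B\times_J M$ with $\pi_B\sigma=\left(A_2\to B\right)$ and $\pi_M\sigma=\left(A_2\to M\right)$. The key observation is that the triple $\left(\sigma,\,A_2\to A_3,\,A_3\to N\right)$ is a pullback around $\left(q,\pi_M\right)$: its top edge $\pi_M\sigma$ is the given map $A_2\to M$, and the required outer-rectangle pullback condition is exactly the right-square pullback $A_2=M\times_N A_3$ of the object. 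Terminality of the distributivity pullback then supplies a unique morphism of pullbacks around $\left(q,\pi_M\right)$, that is, a unique pair $A_2\to C_2$ and $A_3\to C_3$ satisfying $\left(C_2\to D\right)\circ\left(A_2\to C_2\right)=\sigma$, compatibility of $A_2\to A_3$ with $C_2\to C_3$, and $\left(C_3\to N\right)\circ\left(A_3\to C_3\right)=\left(A_3\to N\right)$.

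Because $A_2\to C_2$ lies over $B$ (compose $\left(C_2\to D\right)\circ\left(A_2\to C_2\right)=\sigma$ with $\pi_B$) and both $A_1=E\times_B A_2$ and $C_1=E\times_B C_2$ are pullbacks of $p$ along maps into $B$, the universal property of $C_1$ induces a unique $A_1\to C_1$ compatible with the maps to $E$ and with $A_2\to C_2$. Verifying that $\left(A_1\to C_1,\,A_2\to C_2,\,A_3\to C_3\right)$ commutes with every edge in the defining diagram of a morphism is then immediate from the compatibilities just recorded. For uniqueness, any morphism of the category restricts to a pair $\left(A_2\to C_2,A_3\to C_3\right)$ whose compatibility with the maps to $B$ and $M$ forces it over $D$, making it a morphism of pullbacks around $\left(q,\pi_M\right)$ and hence unique by the distributivity pullback; the remaining component $A_1\to C_1$ is then forced by the pullback defining $C_1$. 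The one genuinely non-formal point, and the main obstacle, is recognising $\left(A_2,A_3\right)$ as a pullback around $\left(q,\pi_M\right)$ and setting up the dictionary between the three morphism-of-pullbacks equations and the commutativities demanded of a morphism in the category; once that is in place, both existence and uniqueness fall out of the pullback and distributivity-pullback universal properties, with no need to touch the coherence of polynomial composition itself.
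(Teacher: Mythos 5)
The paper does not prove this proposition at all: it is imported verbatim from Weber (Prop.\ 3.1.6 of \emph{Polynomials in categories with pullbacks}), so there is no internal proof to compare against. Judged on its own terms, your proof is correct and complete. The two pivotal identifications are made accurately: (i) the outer rectangle of the distributivity pullback around $\left(q,\pi_{M}\right)$ is literally the right-hand square of the object $\left(C_{1},C_{2},C_{3}\right)$, so that object does lie in the category; and (ii) for an arbitrary object, the induced map $\sigma\colon A_{2}\to B\times_{J}M$ turns the pair $\left(A_{2}\to A_{3},A_{3}\to N\right)$ into a pullback around $\left(q,\pi_{M}\right)$, with the three equations defining a morphism of pullbacks around $\left(q,\pi_{M}\right)$ matching exactly the commutativities required of the $\left(f_{2},f_{3}\right)$-part of a morphism in the category (for uniqueness you correctly use that $\pi_{B},\pi_{M}$ are jointly monic to force $\epsilon f_{2}=\sigma$). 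The remaining component $f_{1}$ is then produced and pinned down by the pullback $C_{1}=E\times_{B}C_{2}$, since a map into a pullback is determined by its two projections. This is essentially the argument one finds in Weber's paper, so nothing is lost by the fact that you reconstructed it blind; if anything, your write-up makes explicit the dictionary between ``morphism of pullbacks around $\left(q,\pi_{M}\right)$'' and ``morphism in the category,'' which is the only point where one could go wrong.
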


\begin{defn}
Suppose we are given a locally cartesian closed category $\mathcal{E}$
with chosen pullbacks and distributivity pullbacks. We may then form
the \emph{bicategory of polynomials with cartesian 2-cells} in $\mathcal{E}$,
denoted $\mathbf{Poly}_{c}\left(\mathcal{E}\right)$, with objects
those of $\mathcal{E}$, 1-cells $A\nrightarrow B$ given by polynomials,
composition of 1-cells given by forming the diagram \eqref{polycompdiagram}
just above, and cartesian 2-cells given by pairs of morphisms $\left(\sigma,\nu\right)$
rendering commutative the diagram
\[
\xymatrix@=1.5em{ & E\ar[r]^{p}\ar[ld]_{s}\ar[dd]_{\sigma} & B\ar[rd]^{t}\ar[dd]^{\nu}\\
I & \ar@{}[r]|-{\textnormal{pb}} & \; & J\\
 & M\ar[r]_{q}\ar[ul]^{u} & N\ar[ur]_{v}
}
\]
such that the middle square is a pullback. The identity 1-cells are
given by identity polynomials $\xymatrix@=1em{X & X\ar[r]^{1_{X}}\ar[l]_{1_{X}} & X\ar[r]^{1_{X}} & X.}
$ Composition of 2-cells and the associators may be recovered from
Proposition \ref{unipolycomp} above.
\end{defn}

\begin{defn}
Suppose we are given a locally cartesian closed category $\mathcal{E}$
with chosen pullbacks and distributivity pullbacks. We may then form
the \emph{bicategory of polynomials with general 2-cells, }denoted
$\mathbf{Poly}\left(\mathcal{E}\right)$, with objects and 1-cells
as in $\mathbf{Poly}_{c}\left(\mathcal{E}\right)$, and 2-cells given
by diagrams as below on the left below
\[
\xymatrix{ & E\ar[dl]_{s}\ar[r]^{p} & B\ar[dr]^{t} &  &  &  & E\ar[dl]_{s}\ar[r]^{p} & B\ar[dr]^{t}\\
X & S_{1}\ar[d]_{f_{1}}\ar[r]^{pe_{1}}\ar[u]^{e_{1}}\ar@{}[rd]|-{\textnormal{pb}} & B\ar@{=}[u]\ar[d]^{g} & Y & \approx & X & S_{2}\ar[d]_{f_{2}}\ar[r]^{pe_{2}}\ar[u]^{e_{2}}\ar@{}[rd]|-{\textnormal{pb}} & B\ar@{=}[u]\ar[d]^{g} & Y\\
 & M\ar[r]_{q}\ar[ul]^{u} & N\ar[ur]_{v} &  &  &  & M\ar[r]_{q}\ar[ul]^{u} & N\ar[ur]_{v}
}
\]
regarded equivalent to the diagram on the right provided both indicated
regions are pullbacks.

For the other operations of this bicategory such as the composition
operation on 2-cells we refer the reader to the equivalence $\mathbf{Poly}\left(\mathcal{E}\right)\simeq\mathbf{PolyFun}\left(\mathcal{E}\right)$
\cite{gambinokock} where $\mathbf{PolyFun}\left(\mathcal{E}\right)$
is the bicategory of polynomial functors, described later in Example
\ref{canonicalemb}.
\end{defn}

\begin{rem}
Note that it suffices to give local equivalences $\mathbf{PolyFun}\left(\mathcal{E}\right)_{X,Y}\simeq\mathbf{Poly}\left(\mathcal{E}\right)_{X,Y}$
since from this it follows that the bicategorical structure on $\mathbf{PolyFun}\left(\mathcal{E}\right)$
endows the family of hom-categories $\mathbf{Poly}\left(\mathcal{E}\right)_{X,Y}$
with the structure of a bicategory via doctrinal adjunction \cite{doctrinal}.
This describes the bicategory structure on $\mathbf{Poly}\left(\mathcal{E}\right)$.
\end{rem}

\subsection{Morphisms of bicategories}

There are a few types of morphisms between bicategories we are interested
in for this paper. These include oplax functors, lax functors, pseudofunctors,
gregarious functors and sinister pseudofunctors. After the following
trivial definition we will recall these notions.
\begin{defn}
Given two bicategories $\mathscr{A}$ and $\mathscr{B}$ , a \emph{locally
defined functor} $F\colon\mathscr{A}\to\mathscr{B}$ consists of:
\begin{itemize}
\item for each object $X\in\mathscr{A}$ an object $FX\in\mathscr{B}$;
\item for each pair of objects $X,Y\in\mathscr{A}$, a functor $F_{X,Y}\colon\mathscr{A}_{X,Y}\to\mathscr{B}_{FX,FY}$;
\end{itemize}
subject to no additional conditions.
\end{defn}

It is one of the main points of this paper that many of the coherence
conditions arising from the associativity diagram \eqref{assocdiag}
for oplax functors out of the bicategories $\mathbf{Span}\left(\mathcal{E}\right)$
and $\mathbf{Poly}_{c}\left(\mathcal{E}\right)$ may be avoided (for
suitable categories $\mathcal{E}$). 
\begin{defn}
Given two bicategories $\mathscr{A}$ and $\mathscr{B}$, a \emph{lax
functor} $F\colon\mathscr{A}\to\mathscr{B}$ is a locally defined
functor $F\colon\mathscr{A}\to\mathscr{B}$ equipped with 

\begin{itemize}
\item for each object $X\in\mathscr{A}$, a 2-cell $\lambda_{X}\colon1_{FX}\to F1_{X}$;
\item for each triple of objects $X,Y,Z\in\mathscr{A}$ and pair of morphisms
$f\colon X\to Y$ and $g\colon Y\to Z$, a 2-cell $\varphi_{g,f}\colon Fg\cdot Ff\to Fgf$
natural in $g$ and $f$,
\end{itemize}
such that the constraints render commutative the associativity diagram
\begin{equation}
\xymatrix@=1em{Fh\cdot\left(Fg\cdot Ff\right)\myar{Fh\cdot\varphi_{g,f}}{rr} &  & Fh\cdot F\left(gf\right)\myar{\varphi_{h,gf}}{rr} &  & F\left(h\left(gf\right)\right)\\
\\
\left(Fh\cdot Fg\right)\cdot Ff\myard{\varphi_{h,g}\cdot Ff}{rr}\myar{a_{Fh,Fg,Ff}}{uu} &  & F\left(hg\right)\cdot Ff\myard{\varphi_{hg,f}}{rr} &  & F\left(\left(hg\right)f\right)\myard{F\left(\tilde{a}_{h,g,f}\right)}{uu}
}
\label{assocdiag}
\end{equation}
for composable morphisms $h$,$g$ and $f$. In addition, the nullary
constraint cells must render commutative the diagrams
\[
\xymatrix@=1em{Ff\cdot1_{FX}\myar{Ff\cdot\lambda_{X}}{rr}\myard{r_{Ff}}{dd} &  & Ff\cdot F\left(1_{X}\right)\myar{\varphi_{f,1_{X}}}{dd} &  &  & 1_{FY}\cdot Ff\myar{\lambda_{Y}\cdot Ff}{rr}\myard{l_{Ff}}{dd} &  & F\left(1_{Y}\right)\cdot Ff\myar{\varphi_{1_{Y},f}}{dd}\\
\\
Ff &  & F\left(f\cdot1_{X}\right)\myar{F\left(r_{f}\right)}{ll} &  &  & Ff &  & F\left(1_{Y}\cdot f\right)\myar{F\left(l_{f}\right)}{ll}
}
\]
for all morphisms $f\colon X\to Y$. If the direction of the constraints
$\varphi$ and $\lambda$ is reversed, this is the definition of an
\emph{oplax functor}. If the nullary constraints $\lambda$ are invertible
(in either the lax or oplax case) we then say our functor is \emph{normal.}
If both types of constraint cells $\varphi$ and $\lambda$ are required
invertible, then this is the definition of a \emph{pseudofunctor}. 

\end{defn}

\begin{example}
\label{canonicalemb} It is well known that given a category $\mathcal{E}$
with pullbacks there is a pseudofunctor $\mathbf{Span}\left(\mathcal{E}\right)\to\mathbf{Cat}$
which assigns an object $X\in\mathcal{E}$ to the slice category $\mathcal{E}/X$
and on spans is defined the assignment 
\[
\vcenter{\hbox{\xymatrix@=1.5em{ & B\ar[dl]_{s}\ar[rd]^{t}\\
I &  & J
}
}}\qquad\mapsto\qquad\vcenter{\hbox{\xymatrix{\mathcal{E}/I\ar[r]^{\Delta_{s}} & \mathcal{E}/B\ar[r]^{\Sigma_{t}} & \mathcal{E}/J}
}}
\]
where $\Sigma_{t}$ is the ``composition with $t$'' functor, and
$\Delta_{s}$ is the ``pullback along $s$'' functor (the right
adjoint of $\Sigma_{s}$).

If $\mathcal{E}$ is locally cartesian closed, meaning that for each
morphism $p$ the functor $\Delta_{p}$ has a further right adjoint
denoted $\Pi_{p}$, then there is also such a canonical functor out
of $\mathbf{Poly}\left(\mathcal{E}\right)$ \cite{gambinokock} and
$\mathbf{Poly}_{c}\left(\mathcal{E}\right)$ \cite{weber}, which
assigns an object $X\in\mathcal{E}$ to the slice category $\mathcal{E}/X$
and on polynomials is defined the assignment 
\[
\vcenter{\hbox{\xymatrix@=1.5em{ & E\ar[dl]_{s}\ar[r]^{p} & B\ar[rd]^{t}\\
I &  &  & J
}
}}\qquad\mapsto\qquad\vcenter{\hbox{\xymatrix{\mathcal{E}/I\ar[r]^{\Delta_{s}} & \mathcal{E}/E\ar[r]^{\Pi_{p}} & \mathcal{E}/B\ar[r]^{\Sigma_{t}} & \mathcal{E}/J}
}}
\]

A functor isomorphic to one as on the right above is known as a \emph{polynomial
functor.} The objects of $\mathcal{E}$, polynomial functors, and
strong natural transformations form a 2-category $\mathbf{PolyFun}\left(\mathcal{E}\right)$
\cite{gambinokock}.
\end{example}

\begin{rem}
In the subsequent sections we are interested in pseudofunctors mapping
into a general bicategory $\mathscr{C}$, not just $\mathbf{Cat}$,
however we will still use the above example to motivate our notation.
\end{rem}

The following is a special type of oplax functor which turns up when
studying the universal properties of the span construction \cite{unispans,spanconstruction}.
This notion will also be useful for studying the universal properties
of the polynomial construction.
\begin{defn}
\cite[Definition 2.4]{unispans} We say a normal oplax functor of
bicategories $F\colon\mathscr{A}\to\mathscr{B}$ is \emph{gregarious
}(also known as\emph{ jointed}) if for any pair of 1-cells $f\colon A\to B$
and $g\colon B\to C$ in $\mathscr{A}$ for which $g$ has a right
adjoint, the constraint cell $\varphi_{g,f}\colon F\left(gf\right)\to Fg\cdot Ff$
is invertible. 
\end{defn}

There is also an alternative characterization of gregarious functors
worth mentioning, which establishes gregarious functors as a natural
concept.
\begin{prop}
\cite[Propositions 2.8 and 2.9]{unispans} A normal oplax functor
of bicategories $F\colon\mathscr{A}\to\mathscr{B}$ is gregarious
if and only if it preserves adjunctions; that is, if for every adjunction
$\left(f\dashv u\colon A'\to A,\eta,\epsilon\right)$ in $\mathscr{A}$
there exists 2-cells $\overline{\eta}:1_{FA}\to Fu\cdot Ff$ and $\overline{\epsilon}:Ff\cdot Fu\to1_{FA'}$
which exhibit $Ff$ as left adjoint to $Fu$ and render commutative
the squares
\[
\xymatrix{F\left(1_{A}\right)\ar[r]^{F\eta}\ar[d]_{\lambda_{A}} & F\left(uf\right)\ar[d]^{\varphi_{u,f}} &  & F\left(fu\right)\ar[r]^{F\epsilon}\ar[d]_{\varphi_{f,u}} & F\left(1_{A'}\right)\ar[d]^{\lambda_{A'}}\\
1_{FA}\ar[r]_{\overline{\eta}} & Fu\cdot Ff &  & Ff\cdot Fu\ar[r]_{\overline{\epsilon}} & 1_{FA'}
}
\]
\end{prop}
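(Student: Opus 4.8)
The plan is to prove the two implications separately, in each case using normality of $F$ — i.e. the invertibility of the nullary constraints $\lambda$ — to turn the two displayed squares from mere compatibility conditions into \emph{definitions} of the comparison cells $\overline{\eta}$ and $\overline{\epsilon}$.

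\textbf{Gregarious implies adjunction-preserving.} Given an adjunction $f\dashv u$ with unit $\eta\colon 1_A\to uf$ and counit $\epsilon\colon fu\to 1_{A'}$, I first note that since $f$ has the right adjoint $u$, gregariousness makes $\varphi_{f,u}\colon F(fu)\to Ff\cdot Fu$ invertible. Together with the invertible $\lambda_A,\lambda_{A'}$ this lets me set $\overline{\eta}:=\varphi_{u,f}\circ F\eta\circ\lambda_A^{-1}$ and $\overline{\epsilon}:=\lambda_{A'}\circ F\epsilon\circ\varphi_{f,u}^{-1}$, so that both squares commute by construction. It then remains to verify the triangle identities for $(\overline{\eta},\overline{\epsilon})$. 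I would obtain these by applying $F$ to the triangle identities for $(\eta,\epsilon)$ — using that each $F_{X,Y}$ is a functor, hence preserves vertical composition and identity $2$-cells — and then rewriting the resulting equation between $F$-images of whiskered cells into the desired equation between whiskerings of $F$-images. The rewriting is carried out using the naturality of $\varphi$ (to slide the constraint cells past whiskered $2$-cells), the associativity diagram \eqref{assocdiag} and the unit coherences (to absorb the associators $F(\tilde a)$ and the unitors produced when the triangle identities are correctly typed), with the required cancellations licensed by the invertibility of $\varphi_{f,-}$ (from gregariousness, as $f$ has the right adjoint $u$), of the constraints at identity $1$-cells (from normality), and of the $\lambda$'s.

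\textbf{Adjunction-preserving implies gregarious.} Let $f\colon A\to B$ be arbitrary and let $g\colon B\to C$ have a right adjoint, say $g\dashv v$ with unit $\eta_g$ and counit $\epsilon_g$; I must show $\varphi_{g,f}\colon F(gf)\to Fg\cdot Ff$ is invertible. By hypothesis $F$ sends $g\dashv v$ to an adjunction $Fg\dashv Fv$ with unit $\overline{\eta}_g$ and counit $\overline{\epsilon}_g$ satisfying the compatibility squares, in particular $\overline{\epsilon}_g\circ\varphi_{g,v}=\lambda_C\circ F\epsilon_g$. I would then construct an explicit two-sided inverse $\psi$ to $\varphi_{g,f}$ as the mate, under $Fg\dashv Fv$, of the composite $Ff\xrightarrow{F(\eta_g f)}F(vgf)\xrightarrow{\varphi_{v,gf}}Fv\cdot F(gf)$; concretely $\psi=(\overline{\epsilon}_g\cdot F(gf))\circ(Fg\cdot\varphi_{v,gf})\circ(Fg\cdot F(\eta_g f))$, read modulo associators and unitors. (In the degenerate case of a strict $2$-functor this formula collapses, via a single triangle identity for $g\dashv v$, to $F(1_{gf})$, a useful check that $\psi$ is the right candidate.) That $\psi\circ\varphi_{g,f}$ and $\varphi_{g,f}\circ\psi$ are both identities then follows from the instance of \eqref{assocdiag} for the composable triple $(v,g,f)$, the naturality of $\varphi$, the two triangle identities for $g\dashv v$, and the compatibility square for $\overline{\epsilon}_g$.

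\textbf{Main obstacle.} The conceptual content of both directions is light: everything reduces to a mate computation combined with a triangle identity. The real difficulty, and the step I expect to be most delicate, is the coherence bookkeeping — tracking the associators and unitors of $\mathscr{B}$ together with their $F$-images $F(\tilde a)$, $F(l)$, $F(r)$ in the many places where the one-line formulas above are actually typed, and invoking the precise instance of \eqref{assocdiag} at each stage. I expect the cleanest way to keep this under control is to perform each verification as a pasting computation in $\mathscr{B}$, so that bicategorical coherence lets the unitors and associators be suppressed and each identity reduces to a single naturality square for $\varphi$ followed by a single triangle identity.
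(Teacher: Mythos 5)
Your proposal is correct, and there is in fact no in-paper proof to compare it against: the paper quotes this statement verbatim from Dawson--Par\'e--Pronk \cite{unispans} (their Propositions 2.8 and 2.9) without proof, and your argument is essentially the standard one behind that citation --- define $\overline{\eta}:=\varphi_{u,f}\circ F\eta\circ\lambda_A^{-1}$ and $\overline{\epsilon}:=\lambda_{A'}\circ F\epsilon\circ\varphi_{f,u}^{-1}$ (using normality and gregariousness at $\varphi_{f,u}$), push $F$ of the triangle identities through naturality of $\varphi$, \eqref{assocdiag} and the unit coherences, and conversely invert $\varphi_{g,f}$ by the mate $\psi$ you wrote down.

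One caveat on the converse direction: your inventory of ingredients is slightly too small, in a way that would matter if someone carried it out literally. Verifying $\psi\circ\varphi_{g,f}=\mathrm{id}$ uses the instance of \eqref{assocdiag} at the triple $(g,v,gf)$, naturality of $\varphi$, the $\overline{\epsilon}_g$-square, and the triangle identity for $g\dashv v$ in $\mathscr{A}$ whiskered by $f$; but verifying $\varphi_{g,f}\circ\psi=\mathrm{id}$ needs, in addition to the instance at $(v,g,f)$ that you cite, the $\overline{\eta}_g$-square (to rewrite $\varphi_{v,g}\circ F\eta_g$ as $\overline{\eta}_g\circ\lambda_B$) and a triangle identity for the adjunction $Fg\dashv Fv$ in $\mathscr{B}$, not for $g\dashv v$ in $\mathscr{A}$. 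Both squares are part of the hypothesis, so nothing is lost; but from the $\overline{\epsilon}_g$-square alone one only concludes that $\varphi_{g,f}$ is split monic, which is why the statement requires both compatibility squares.
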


We also need a notion of morphism between lax, oplax, gregarious or
pseudofunctors. It will be convenient here to use Lack's icons \cite{icons},
defined as follows.
\begin{defn}
Given two lax functors $F,G\colon\mathscr{A}\to\mathscr{B}$ which
agree on objects, an \emph{icon} $\alpha\colon F\Rightarrow G$ consists
of a family of natural transformations
\[
\xymatrix{\mathscr{A}_{X,Y}\ar@/^{1pc}/[rr]^{F_{X,Y}}\ar@/_{1pc}/[rr]_{G_{X,Y}}\ar@{}[rr]|-{\Downarrow\alpha_{X,Y}} &  & \mathscr{B}_{FX,FY}, & X,Y\in\mathscr{A}}
\]
with components rendering commutative the diagrams
\[
\xymatrix@=1em{Fg\cdot Ff\ar[rr]^{\varphi_{g,f}}\ar[dd]_{\alpha_{g}\ast\alpha_{f}} &  & F\left(gf\right)\ar[dd]^{\alpha_{gf}} &  &  &  & 1_{FX}\ar[rrdd]^{\omega_{X}}\ar[dd]_{\lambda_{X}}\\
\\
Gg\cdot Gf\ar[rr]_{\psi_{g,f}} &  & G\left(gf\right) &  &  &  & F1_{X}\ar[rr]_{\alpha_{1_{X}}} &  & G1_{X}
}
\]
for composable morphisms $f$ and $g$ in $\mathscr{A}$. Similarly,
one may define icons between oplax functors.
\end{defn}

An important point about icons is that there is a 2-category of bicategories,
oplax (lax) functors, and icons. For convenience, we make the following
definition.
\begin{defn}
We denote by $\mathbf{Icon}$ (resp. $\mathbf{Greg}$) the 2-category
of bicategories, pseudofunctors (resp. gregarious functors) and icons.
\end{defn}

Finally, we recall the notion of a sinister pseudofunctor, as well
as the notion of a sinister pseudofunctor which satisfies a certain
Beck condition. These notions are to be used regularly through the
paper.
\begin{defn}
\label{defSinBeck} Let $\mathcal{E}$ be a category seen as a locally
discrete 2-category, and let $\mathscr{C}$ be a bicategory. We say
a pseudofunctor $F\colon\mathcal{E}\to\mathscr{C}$ of bicategories
is \emph{sinister }if for every morphism $f$ in $\mathcal{E}$ the
1-cell $Ff$ has a right adjoint in $\mathscr{C}$.

Supposing further that $\mathcal{E}$ has pullbacks, for any pullback
square in $\mathcal{E}$ as on the left below, we may apply $F$ and
compose with pseudofunctoriality constraints giving an invertible
2-cell as in the middle square below, and then take mates to get a
2-cell as on the right below
\[
\xymatrix{\bullet\ar[d]_{g'}\myar{f'}{r} & \bullet\ar[d]^{g} &  & \bullet\ar[d]_{Fg'}\myar{Ff'}{r}\ar@{}[dr]|-{\cong} & \bullet\ar[d]^{Fg} &  & \bullet\myar{F_{\Sigma}f'}{r}\ar@{}[dr]|-{\Downarrow\mathfrak{b}_{f,g}^{f',g'}} & \bullet\\
\bullet\ar[r]_{f} & \bullet &  & \bullet\ar[r]_{Ff} & \bullet &  & \bullet\ar[r]_{F_{\Sigma}f}\ar[u]^{F_{\Delta}g'} & \bullet\ar[u]_{F_{\Delta}g}
}
\]
We say the sinister pseudofunctor $F\colon\mathcal{E}\to\mathscr{C}$
satisfies the \emph{Beck condition} if every such $\mathfrak{b}_{f,g}^{f',g'}$
as on the right above is invertible.

We will denote by $\mathbf{Sin}\left(\mathcal{E},\mathscr{C}\right)$
the category of sinister pseudofunctors $\mathcal{E}\to\mathscr{C}$
and invertible icons, and $\mathbf{Beck}\left(\mathcal{E},\mathscr{C}\right)$
the subcategory of sinister pseudofunctors satisfying the Beck condition.
\end{defn}

\begin{rem}
Note that $\mathfrak{b}_{f,g}^{f',g'}$ as above may be defined for
any commuting square, not just a pullback. We call such a $\mathfrak{b}_{f,g}^{f',g'}$
the \emph{Beck 2-cell} corresponding to the commuting square, but
should not expect it to be invertible if the square is not a pullback
(even if the Beck condition holds).
\end{rem}

\subsection{Mates under adjunctions}

We now recall the basic properties of mates \cite{2caty}. Given two
pairs of adjoint morphisms
\[
\eta_{1},\epsilon_{1}\colon f_{1}\dashv u_{1}\colon B_{1}\to A_{1},\qquad\eta_{2},\epsilon_{2}\colon f_{2}\dashv u_{2}\colon B_{2}\to A_{2}
\]
in a bicategory $\mathscr{A}$, we say that two 2-cells
\[
\xymatrix{A_{1}\myar{g}{r}\ar[d]_{f_{1}}\ar@{}[rd]|-{\Downarrow\alpha} & A_{2}\ar[d]^{f_{2}} &  &  & A_{1}\myar{g}{r}\ar@{}[rd]|-{\Downarrow\beta} & A_{2}\\
B_{1}\myard{h}{r} & B_{2} &  &  & B_{1}\myard{h}{r}\ar[u]^{u_{1}} & B_{2}\ar[u]_{u_{2}}
}
\]
are mates under the adjunctions $f_{1}\dashv u_{1}$ and $f_{2}\dashv u_{2}$
if $\beta$ is given by the pasting
\[
\xymatrix@=1em{ &  & A_{1}\myar{g}{rr}\ar[dd]|-{f_{1}}\ar@{}[rrdd]|-{\Downarrow\alpha} &  & A_{2}\ar[dd]|-{f_{2}}\ar[rr]^{\textnormal{id}} &  & A_{2}\\
 & \ar@{}[]|-{\;\;\;\;\Downarrow\epsilon_{1}} &  &  &  & \ar@{}[]|-{\Downarrow\eta_{2}\;\;\;\;}\\
B_{1}\ar[rr]_{\textnormal{id}}\ar@/^{0.7pc}/[urur]^{u_{1}} &  & B_{1}\myard{h}{rr} &  & B_{2}\ar@/_{0.7pc}/[urur]_{u_{2}}
}
\]
or equivalently, $\alpha$ is given by the pasting 
\[
\xymatrix@=1em{A_{1}\ar[rr]^{\textnormal{id}}\ar@/_{0.7pc}/[rdrd]_{f_{1}} &  & A_{1}\myar{g}{rr}\ar@{}[rdrd]|-{\Downarrow\beta} &  & A_{2}\ar@/^{0.7pc}/[rdrd]^{f_{2}}\\
 & \ar@{}[]|-{\;\;\;\;\Downarrow\eta_{1}} &  &  &  & \ar@{}[]|-{\Downarrow\epsilon_{2}\;\;\;\;}\\
 &  & B_{1}\myard{h}{rr}\ar[uu]|-{u_{1}} &  & B_{2}\ar[uu]|-{u_{2}}\ar[rr]_{\textnormal{id}} &  & B_{2}
}
\]
It follows from the triangle identities that taking mates in this
fashion defines a bijection between 2-cells $f_{2}g\to hf_{1}$ and
2-cells $gu_{1}\to u_{2}h$.

Moreover, it is well known that this correspondence is functorial.
Given another adjunction $\eta_{3},\epsilon_{3}\colon f_{3}\dashv u_{3}\colon B_{3}\to A_{3}$
and 2-cells as below

\[
\xymatrix{A_{1}\myar{g}{r}\ar[d]_{f_{1}}\ar@{}[rd]|-{\Downarrow\alpha_{l}} & A_{2}\ar[d]|-{f_{2}}\ar[r]^{m}\ar@{}[rd]|-{\Downarrow\alpha_{r}} & A_{3}\ar[d]^{f_{3}} &  & A_{1}\myar{g}{r}\ar@{}[rd]|-{\Downarrow\beta_{l}} & A_{2}\ar[r]^{m}\ar@{}[rd]|-{\Downarrow\beta_{r}} & A_{3}\\
B_{1}\myard{h}{r} & B_{2}\ar[r]_{n} & B_{3} &  & B_{1}\myard{h}{r}\ar[u]^{u_{1}} & B_{2}\ar[u]|-{u_{2}}\ar[r]_{n} & B_{3}\ar[u]_{u_{2}}
}
\]
where $\alpha_{l}$ and $\alpha_{r}$ respectively correspond to $\beta_{l}$
and $\beta_{r}$ under the mates correspondence, it follows that the
pasting of $\alpha_{l}$ and $\alpha_{r}$ corresponds to the pasting
of $\beta_{l}$ and $\beta_{r}$ under the mates correspondence. Moreover,
the analogous property holds for pasting vertically. These vertical
and horizontal pasting properties\footnote{There are also nullary pasting properties which we will omit.}
are often referred to as \emph{functoriality of mates}.
\begin{rem}
Given an adjunction $\eta,\epsilon\colon f\dashv u\colon B\to A$
the left square below
\[
\xymatrix{A\myar{f}{r}\ar[d]_{f}\ar@{}[rd]|-{\Downarrow\textnormal{id}} & A\ar[d]^{\textnormal{id}} &  &  & A\myar{f}{r}\ar@{}[rd]|-{\Downarrow\epsilon} & A\\
B\myard{\textnormal{id}}{r} & B &  &  & B\myard{\textnormal{id}}{r}\ar[u]^{u} & B\ar[u]_{\textnormal{id}}
}
\]
corresponds to the right above via the mates correspondence, allowing
one to see the counit of an adjunction as an instance of the mates
correspondence. A similar calculation may be done for the units. This
will allow us to see calculations involving units and counits as functoriality
of mates calculations.
\end{rem}

One consequence of the mates correspondence which will be of interest
to us is the following lemma; a special case of \cite[Lemma 2.13]{unispans},
showing that the component of an icon between gregarious functors
at a left adjoint 1-cell is invertible.
\begin{lem}
\label{mateinv} Suppose $F,G\colon\mathscr{A}\to\mathscr{B}$ are
gregarious functors between bicategories which agree on objects. Suppose
that $\alpha\colon F\Rightarrow G$ is an icon. Suppose that a given
1-cell $f\colon X\to Y$ has a right adjoint $u$ in $\mathscr{A}$
with unit $\epsilon$ and counit $\eta$. Then the 2-cell $\alpha_{f}\colon Ff\to Gf$
has an inverse given by the mate of $\alpha_{u}\colon Fu\to Gu$.
\end{lem}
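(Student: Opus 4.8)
Given gregarious functors $F,G\colon\mathscr{A}\to\mathscr{B}$ agreeing on objects, an icon $\alpha\colon F\Rightarrow G$, and an adjunction $(f\dashv u,\eta,\epsilon)$ in $\mathscr{A}$, I must show that $\alpha_f\colon Ff\to Gf$ is invertible with inverse the mate of $\alpha_u\colon Fu\to Gu$. The first thing to pin down is what "the mate of $\alpha_u$" means: since $F$ and $G$ are gregarious, they preserve adjunctions, so $Ff\dashv Fu$ and $Gf\dashv Gu$ are adjunctions in $\mathscr{B}$ (with units and counits $\overline\eta,\overline\epsilon$ supplied by the adjunction-preservation property recorded just before this lemma). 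The 2-cell $\alpha_u\colon Fu\to Gu$ therefore has a mate under these two adjunctions, which is a 2-cell $Gf\to Ff$; call it $\beta$. The claim is precisely that $\beta=\alpha_f^{-1}$.

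**The plan.** My strategy is to verify directly that $\beta\circ\alpha_f=\mathrm{id}_{Ff}$ and $\alpha_f\circ\beta=\mathrm{id}_{Gf}$ using functoriality of mates together with the icon compatibility axioms. First I would write out $\beta$ as the pasting of $\alpha_u$ with the unit $\overline\eta^{F}$ of $Ff\dashv Fu$ and the counit $\overline\epsilon^{G}$ of $Gf\dashv Gu$, exactly as in the mates display of the excerpt. Then I would compute the vertical composite $\beta\circ\alpha_f$ as a pasting diagram. The key observation is that $\alpha$ being an icon forces $\alpha_u\ast\alpha_f$ to be compatible with the composite $u f$: by the icon square for the composable pair $(u,f)$ we have $\psi_{u,f}\circ(\alpha_u\ast\alpha_f)=\alpha_{uf}\circ\varphi_{u,f}$. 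Combining this with the gregariousness squares (which relate $\overline\eta^{F}$ and $\overline\eta^{G}$ to $F\eta$, $G\eta$ via $\varphi_{u,f}$, $\psi_{u,f}$, $\lambda$, $\omega$) and the icon nullary axiom at the identity, the unit data on both sides match up, collapsing the composite to the identity.

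**The mechanism.** Concretely, the argument should be organised as a sequence of functoriality-of-mates rewrites. The trick the excerpt is flagging with its Remark on units and counits is that $\overline\eta$ and $\overline\epsilon$ may themselves be read as instances of the mates correspondence; this lets me treat the whole computation uniformly as horizontal and vertical pasting of mates, so that the two naturality/icon squares from the gregariousness characterisation slot in cleanly. The icon axiom supplies exactly the intertwining $\alpha_u\ast\alpha_f\mapsto\alpha_{uf}$ needed so that, after transporting across the adjunctions, $\alpha_u$'s mate cancels $\alpha_f$. I expect one composite ($\beta\circ\alpha_f=\mathrm{id}$) to follow from this pairing via the unit squares, and the other ($\alpha_f\circ\beta=\mathrm{id}$) to follow by the dual argument using the counit squares; alternatively, once one side is shown the other is automatic because a 2-cell in a category with a one-sided inverse against an already-known two-sided situation is forced, but I would prefer to verify both sides symmetrically to keep the coherence bookkeeping transparent.

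**The main obstacle.** The genuine difficulty is not any single algebraic identity but the coherent bookkeeping of the triangle identities for the induced adjunctions $Ff\dashv Fu$ and $Gf\dashv Gu$ together with the icon squares, all while keeping track of the associator and unitor constraints $\varphi,\psi,\lambda,\omega$ that appear because $\mathscr{A}$ and $\mathscr{B}$ are genuine bicategories rather than $2$-categories. The cleanest route is to invoke functoriality of mates to reduce these pastings to equalities of already-established 2-cells, so that the constraints are absorbed into the gregariousness squares rather than manipulated by hand; the risk is mismatching the direction of a constraint cell (the oplax versus lax convention) when feeding $\alpha_u$ through. I would therefore fix orientations at the outset by drawing the two gregariousness squares and the icon square for $(u,f)$ explicitly, and check that the mate of $\alpha_u$ composed with $\alpha_f$ reduces, via these three squares plus the triangle identities, to $\mathrm{id}_{Ff}$.
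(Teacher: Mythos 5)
Your proposal is correct and takes essentially the same approach as the paper: the paper likewise uses gregariousness to transport the adjunction to $Ff\dashv Fu$ and $Gf\dashv Gu$ (with unit $\varphi_{u,f}\circ F\eta\circ\lambda_{X}^{-1}$ and counit $\lambda_{Y}\circ F\epsilon\circ\varphi_{f,u}^{-1}$), writes the mate of $\alpha_{u}$ as exactly the pasting you describe, and then omits the verification you sketch (citing the reference for those details), so your plan simply fills in what the paper defers. The one thing to correct is that your icon square is written in the lax convention; since gregarious functors are normal \emph{oplax}, it should read $(\alpha_{u}\ast\alpha_{f})\circ\varphi_{u,f}=\psi_{u,f}\circ\alpha_{uf}$ --- a directional slip you yourself flag, and which does not affect the mechanism (icon binary and nullary axioms, local naturality at $\eta$ and $\epsilon$, and the triangle identities do collapse both composites to identities as you claim).
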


\begin{proof}
As $f\dashv u$ we have $Ff\dashv Fu$ via counit
\[
\xymatrix@R=0.5em{Ff\cdot Fu\myar{\varphi_{f,u}^{-1}}{r} & F\left(fu\right)\myar{F\epsilon}{r} & F1_{Y}\myar{\lambda_{Y}}{r} & 1_{FY}}
\]
and unit
\[
\xymatrix@R=0.5em{1_{FX}\myar{\lambda_{X}^{-1}}{r} & F1_{X}\myar{F\eta}{r} & F\left(uf\right)\myar{\varphi_{u,f}}{r} & Fu\cdot Ff}
\]
and similarly $Gf\dashv Gu$. That the mate of $\alpha_{u}$ constructed
as the pasting 
\[
\xymatrix{FX\ar[r]^{1_{FX}}\ar[d]_{1_{FX}}\ar@{}[rd]|-{\Uparrow\lambda_{X}^{-1}} & FX\ar[r]^{1_{FX}}\ar[d]|-{F1_{X}}\ar@{}[rd]|-{\Uparrow F\eta} & FX\ar[r]^{Ff}\ar[d]|-{Fuf}\ar@{}[rd]|-{\Uparrow\varphi_{u,f}} & FY\ar[r]^{1_{FY}}\ar[d]|-{Fu}\ar@{}[rd]|-{\Uparrow\alpha_{u}} & FY\ar[r]^{1_{FY}}\ar[d]|-{Gu}\ar@{}[rd]|-{\Uparrow\psi_{f,u}} & FY\ar[r]^{1_{FY}}\ar[d]|-{Gfu}\ar@{}[rd]|-{\Uparrow G\epsilon} & FY\ar[r]^{1_{FY}}\ar[d]|-{G1_{Y}}\ar@{}[rd]|-{\Uparrow\omega_{Y}} & FY\ar[d]^{1_{FY}}\\
FX\ar[r]_{1_{FX}} & FX\ar[r]_{1_{FX}} & FX\ar[r]_{1_{FX}} & FX\ar[r]_{1_{FX}} & FX\ar[r]_{Gf} & FY\ar[r]_{1_{FY}} & FY\ar[r]_{1_{FY}} & FY
}
\]
is the inverse of $\alpha_{f}$ is a simple calculation which we will
omit (as the details are in \cite[Lemma 2.13]{unispans}).
\end{proof}
\begin{rem}
Under the conditions of the above lemma we have corresponding functors
$F^{\textnormal{co}},G^{\textnormal{co}}\colon\mathscr{A}^{\textnormal{co}}\to\mathscr{B}^{\textnormal{co}}$
which are adjunction preserving (gregarious), and an icon $\alpha^{\textnormal{co}}\colon G^{\textnormal{co}}\Rightarrow F^{\textnormal{co}}$.
Thus noting $u\dashv f$ in $\mathscr{A}^{\textnormal{co}}$ we see
that in $\mathscr{B}^{\textnormal{co}}$, $\alpha_{u}^{\textnormal{co}}\colon Gf\to Ff$
has an inverse given as the mate of $\alpha_{f}^{\textnormal{co}}$.
It follows that $\alpha_{u}$ has an inverse given as the mate of
$\alpha_{f}$ in $\mathscr{B}$.
\end{rem}

\subsection{Adjunctions of spans and polynomials}

Later on we will need to discuss gregarious functors out of bicategories
of spans and bicategories of polynomials, and so an understanding
of the adjunctions in these bicategories will be essential.

We first recall the classification of adjunctions in the bicategory
of spans. A proof of this classification is given in \cite[Proposition 2]{spansrelations},
but this proof does not readily generalize to the setting of polynomials.
We therefore give a simpler proof using the properties of the mates
correspondence.
\begin{prop}
\label{spanadjprop} Up to isomorphism, all adjunctions in $\mathbf{Span}\left(\mathcal{E}\right)$
are of the form
\begin{equation}
\vcenter{\hbox{\xymatrix@=1.5em{ & X\ar[dl]_{1_{X}}\ar[rd]^{f}\\
X &  & Y
}
}}\qquad\dashv\qquad\vcenter{\hbox{\xymatrix@=1.5em{ & X\ar[dl]_{f}\ar[rd]^{1_{X}}\\
Y &  & X
}
}}\label{spanadj}
\end{equation}
with unit and counit
\[
\xymatrix@=1.5em{ & X\ar[dl]_{1_{X}}\ar[rd]^{1_{X}}\ar[dd]^{\delta} &  &  &  &  & X\ar[dl]_{f}\ar[rd]^{f}\ar[dd]^{f}\\
X &  & X &  &  & Y &  & Y\\
 & X\times_{Y}X\ar[ru]_{\pi_{2}}\ar[lu]^{\pi_{1}} &  &  &  &  & Y\ar[ru]_{1_{Y}}\ar[ul]^{1_{Y}}
}
\]
where $\left(X\times_{Y}X,\pi_{1},\pi_{2}\right)$ is the pullback
of $f$ with itself.
\end{prop}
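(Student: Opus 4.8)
The plan is to prove the statement in two parts: first, that the displayed spans form an adjunction with the indicated unit and counit, and second, that \emph{every} adjunction in $\mathbf{Span}\left(\mathcal{E}\right)$ is isomorphic to one of this form. Since the excerpt suggests using the mates correspondence rather than direct span calculations, I would exploit the pseudofunctor $\mathbf{Span}\left(\mathcal{E}\right)\to\mathbf{Cat}$ from Example \ref{canonicalemb}, which is locally fully faithful, together with the fact that adjunctions are preserved and reflected by such an embedding.

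For the first part, I would verify the triangle identities for the proposed unit $\delta$ and counit. Rather than grinding through pullback diagrams, the cleaner route is to observe that under the canonical pseudofunctor the left span $\left(1_X,f\right)$ maps to $\Sigma_f\colon\mathcal{E}/X\to\mathcal{E}/Y$ (composition with $f$) and the right span $\left(f,1_X\right)$ maps to $\Delta_f\colon\mathcal{E}/Y\to\mathcal{E}/X$ (pullback along $f$). The adjunction $\Sigma_f\dashv\Delta_f$ is standard, and its unit is precisely the map classifying the diagonal into the pullback $X\times_Y X$, matching $\delta$. Because the embedding is locally fully faithful and the hom-categories detect the triangle identities, it suffices to check the identities after applying the pseudofunctor, where they reduce to the well-known slice-category adjunction.

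For the second part, suppose $\left(P\dashv Q,\eta,\epsilon\right)$ is an arbitrary adjunction in $\mathbf{Span}\left(\mathcal{E}\right)$ with $P\colon X\nrightarrow Y$ given by legs $\left(p\colon V\to X,\ q\colon V\to Y\right)$. The strategy is to use the counit $\epsilon\colon P\cdot Q\to 1_Y$ and unit $\eta\colon 1_X\to Q\cdot P$, read off as span morphisms into and out of pullbacks, to force $p$ to be invertible. I would apply the canonical pseudofunctor and use that it reflects adjunctions between the specific 1-cells; alternatively, one argues directly that the leg $p$ must be an isomorphism by exploiting that the identity span $1_X$ appears as the composite $Q\cdot P$ up to the unit data. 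Once $p$ is shown invertible, we may normalise along it to assume $p=1_X$, whence $P$ has the form $\left(1_X,f\right)$ with $f=q$, and $Q$ is forced to be its mate $\left(f,1_X\right)$ by uniqueness of adjoints up to isomorphism.

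The main obstacle I expect is the second part: showing that the left leg of a left-adjoint span is necessarily invertible. The honest content of the classification lies here, and the cleanest argument is likely to transport the problem across the locally-fully-faithful embedding into $\mathbf{Cat}$, where a left-adjoint functor between slice categories of the form $\mathcal{E}/X\to\mathcal{E}/Y$ arising from a span must be of the form $\Sigma_f$, forcing the span's left leg to be an isomorphism. Care is needed to ensure the embedding genuinely \emph{reflects} the relevant adjunction data and that the normalisation identifying $p$ with $1_X$ is carried out up to the isomorphism of adjunctions allowed by the statement, rather than on the nose.
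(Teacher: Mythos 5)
Your first half is essentially repairable, but it over-claims: to verify the triangle identities for the proposed unit and counit you only need the canonical pseudofunctor of Example \ref{canonicalemb} to be locally \emph{faithful} (which it is: evaluating a natural transformation at the terminal object $1_{X}\in\mathcal{E}/X$ recovers the underlying morphism of spans), and faithfulness suffices to reflect equations between 2-cells. The genuine gap is in the second half, which you yourself identify as carrying all the content. Your plan is to transport the classification along the embedding $\mathbf{Span}\left(\mathcal{E}\right)\to\mathbf{Cat}$, using (i) that this embedding is locally fully faithful and reflects adjunctions, and (ii) that a left-adjoint functor $\mathcal{E}/X\to\mathcal{E}/Y$ arising from a span must be of the form $\Sigma_{f}$. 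Both claims are false. For (ii): whenever the left leg $s$ is exponentiable one has $\Sigma_{t}\Delta_{s}\dashv\Pi_{s}\Delta_{t}$ in $\mathbf{Cat}$, so over a locally cartesian closed category such as $\mathbf{Set}$ \emph{every} span becomes a left adjoint after applying the embedding. Concretely, the span $1\leftarrow2\to1$ in $\mathbf{Set}$ is sent to $2\times\left(-\right)$, which is a left adjoint (right adjoint $\left(-\right)^{2}$) and is not isomorphic to any $\Sigma_{f}$; yet this span has no right adjoint in $\mathbf{Span}\left(\mathbf{Set}\right)$, since the hom-category $\mathbf{Span}\left(\mathbf{Set}\right)_{1,1}$ is $\left(\mathbf{Set},\times\right)$ and the first triangle identity forces the identity of $2$ to be a constant map. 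Hence the embedding does not reflect adjunctions: the unit and counit of $\Sigma_{t}\Delta_{s}\dashv\Pi_{s}\Delta_{t}$ are not in the image of the hom-functors. This failure is possible because (i) is also false: the embedding is locally faithful but not locally full --- span morphisms correspond only to the \emph{cartesian} natural transformations. For instance, with $\mathcal{E}=G\textnormal{-}\mathbf{Set}$ for a nontrivial group $G$ and the span $1\leftarrow G\to1$ ($G$ regular), the family $\left(g,x\right)\mapsto\left(g,\,gcg^{-1}x\right)$ for a fixed $c\neq e$ is a natural transformation $G\times\left(-\right)\Rightarrow G\times\left(-\right)$ induced by no morphism of spans.

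By contrast, the paper's proof never leaves $\mathbf{Span}\left(\mathcal{E}\right)$, precisely so that the argument transfers later to polynomials (Proposition \ref{polyadjprop}), where a $\mathbf{Cat}$-specific route is even less available. Given an adjunction $\left(s,t\right)\dashv\left(u,v\right)$, it represents the unit as a span morphism from the identity span into the pullback composite, with components $\alpha$, $h$, $\beta$ as in \eqref{spanmate}, noting $v\beta=\textnormal{id}$; it then factors the unit as $1\to\left(s,t\right);\left(h,1\right)\to\left(s,t\right);\left(u,v\right)$ and takes mates to obtain morphisms $\left(u,v\right)\to\left(h,1\right)\overset{\beta}{\to}\left(u,v\right)$ composing to the identity. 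Since the first of these is necessarily $v$, this gives $\beta v=\textnormal{id}$, so $v$ is invertible; then $\left(u,v\right)\cong\left(uv^{-1},1\right)$, and uniqueness of adjoints converts this isomorphism of right adjoints into an isomorphism $\left(1,uv^{-1}\right)\cong\left(s,t\right)$, showing $s$ is invertible too. To fix your proof you would need to replace the transport-to-$\mathbf{Cat}$ step by an internal argument of this kind, working directly with the unit and counit as span morphisms.
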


\begin{proof}
It is simple to check the above defines an adjunction. We now check
that all adjunctions have this form, up to isomorphism. To do this,
suppose we are given an adjunction of spans
\[
\vcenter{\hbox{\xymatrix@=1.5em{ & \bullet\ar[dl]_{s}\ar[rd]^{t}\\
\bullet &  & \bullet
}
}}\qquad\dashv\qquad\vcenter{\hbox{\xymatrix@=1.5em{ & \bullet\ar[dl]_{u}\ar[rd]^{v}\\
\bullet &  & \bullet
}
}}
\]
and denote the unit of this adjunction (actually a representation
of the unit using the universal property of pullback) by
\begin{equation}
\xymatrix@=1.5em{ &  & \bullet\ar[rd]|-{\beta}\ar[ld]|-{\alpha}\ar[dd]|-{h}\ar@/^{1.2pc}/[rrdd]|-{\textnormal{id}}\ar@/_{1.2pc}/[lldd]|-{\textnormal{id}}\\
 & \bullet\ar[dl]|-{s}\ar[rd]|-{t} &  & \bullet\ar[dl]|-{u}\ar[rd]|-{v}\\
\bullet &  & \bullet &  & \bullet
}
\label{spanmate}
\end{equation}
noting that $v\beta$ is the identity. We then factor this unit as
\[
\xymatrix{1\myar{}{r} & \left(s,t\right);\left(h,1\right)\myar{\textnormal{id};\beta}{r} & \left(s,t\right);\left(u,v\right)}
\]
where the first morphism is represented by
\[
\xymatrix@=1.5em{ &  & \bullet\ar[rd]|-{\textnormal{id}}\ar[ld]|-{\alpha}\ar[dd]|-{h}\ar@/^{1.2pc}/[rrdd]|-{\textnormal{id}}\ar@/_{1.2pc}/[lldd]|-{\textnormal{id}}\\
 & \bullet\ar[dl]|-{s}\ar[rd]|-{t} &  & \bullet\ar[dl]|-{h}\ar[rd]|-{\textnormal{id}}\\
\bullet &  & \bullet &  & \bullet
}
\]
and $\beta\colon\left(h,1\right)\to\left(u,v\right)$ is pictured
on the right in \eqref{spanmate}. Under the mates correspondence
this yields two morphisms
\[
\xymatrix{\left(u,v\right)\myar{}{r} & \left(h,1\right)\myar{\beta}{r} & \left(u,v\right)}
\]
which must compose to the identity. As the first morphism of spans
is necessarily $v$ we have also established $\beta v$ as the identity,
and hence $v$ as an isomorphism. This allows us to construct an isomorphism
of right adjoints $\left(u,v\right)\to\left(f,1\right)$ for an $f$
as in \eqref{spanadj}, corresponding to an isomorphism of left adjoints
$\left(1,f\right)\to\left(s,t\right)$ and hence showing $s$ is invertible
also.
\end{proof}
\begin{rem}
If we restrict ourselves to the bicategory $\mathbf{Span}_{\textnormal{iso}}\left(\mathcal{E}\right)$
then we only have adjunctions as above when $f$ is invertible (necessary
to construct the counit).
\end{rem}

In the case of polynomials there are more adjunctions to consider.
\begin{prop}
\label{polyadjprop} Up to isomorphism, every adjunction in $\mathbf{Poly}\left(\mathcal{E}\right)$
is a composite of adjunctions of the form
\[
\vcenter{\hbox{\xymatrix@=1.5em{ & X\ar[dl]_{1_{X}}\ar[r]^{1_{X}} & X\ar[rd]^{f}\\
X &  &  & Y
}
}}\qquad\dashv\qquad\vcenter{\hbox{\xymatrix@=1.5em{ & X\ar[dl]_{f}\ar[r]^{1_{X}} & X\ar[rd]^{1_{X}}\\
Y &  &  & X
}
}}
\]
with unit and counit
\[
\xymatrix@=1.5em{ & X\ar[dl]_{1_{X}}\ar[r]^{1_{X}} & X\ar[rd]^{1_{X}}\ar@{=}[d] &  &  &  &  & X\ar[dl]_{f}\ar[r]^{1_{X}} & X\ar[rd]^{f}\ar@{=}[d]\\
X & X\ar[d]_{\delta}\ar[u]^{1_{X}}\ar[r]^{1_{X}} & X\ar[d]^{\delta} & X &  &  & Y & X\ar[d]_{f}\ar[r]^{1_{X}}\ar[u]^{1_{X}} & X\ar[d]^{f} & Y\\
 & X\times_{Y}X\ar[lu]^{\pi_{1}}\ar[r]_{1_{X\times_{Y}X}} & X\times_{Y}X\ar[ru]_{\pi_{2}} &  &  &  &  & Y\ar[ul]^{1_{Y}}\ar[r]_{1_{Y}} & Y\ar[ru]_{1_{Y}}
}
\]
and 
\[
\vcenter{\hbox{\xymatrix@=1.5em{ & X\ar[dl]_{f}\ar[r]^{1_{X}} & X\ar[rd]^{1_{X}}\\
Y &  &  & X
}
}}\qquad\dashv\qquad\vcenter{\hbox{\xymatrix@=1.5em{ & X\ar[dl]_{1_{X}}\ar[r]^{f} & Y\ar[rd]^{1_{Y}}\\
X &  &  & Y
}
}}
\]
with unit and counit
\[
\xymatrix@=1.5em{ & Y\ar[dl]_{1_{Y}}\ar[r]^{1_{Y}} & Y\ar[rd]^{1_{Y}}\ar@{=}[d] &  &  &  &  & X\times_{Y}X\ar[ld]_{\pi_{1}}\myar{\pi_{2}}{r} & X\ar[rd]^{1_{X}}\ar@{=}[d]\\
Y & X\ar[r]^{f}\ar[d]_{1_{X}}\ar[u]^{f} & Y\ar[d]^{1_{Y}} & Y &  &  & X & X\ar[u]^{\delta}\ar[d]_{1_{X}}\ar[r]^{1_{X}} & X\ar[d]^{1_{X}} & X\\
 & X\ar[r]_{f}\ar[lu]^{f} & Y\ar[ru]_{1_{Y}} &  &  &  &  & X\ar[r]_{1_{X}}\ar[lu]^{1_{X}} & X\ar[ru]_{1_{X}}
}
\]
\end{prop}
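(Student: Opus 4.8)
The plan is to prove the two displayed families are genuine adjunctions, and then to show that every adjunction reduces, up to isomorphism, to a composite of these. For the first part, the triangle identities for each displayed unit/counit pair are a direct verification using Proposition~\ref{unipolycomp} to compute the relevant composites; moreover, the first family is precisely the image of the span adjunction \eqref{spanadj} under the homomorphism $\mathbf{Span}(\mathcal{E})\hookrightarrow\mathbf{Poly}(\mathcal{E})$, so that case is immediate from Proposition~\ref{spanadjprop}, and the genuinely polynomial case $\Delta_f\dashv\Pi_f$ is checked analogously. Throughout I will write a polynomial as a triple $(s,p,t)$ recording its three legs.

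The key reduction is that every polynomial $(s,p,t)\colon I\nrightarrow J$ factors canonically, up to isomorphism, as
\[
(s,p,t)\;\cong\;(1,1,t)\circ(1,p,1)\circ(s,1,1),
\]
which follows from Proposition~\ref{unipolycomp} (the distributivity pullbacks involved being trivial, since the intermediate middle legs are identities). Here $(1,1,t)$ and $(s,1,1)$ are left adjoints, being the left-hand $1$-cells of the two displayed families with right adjoints $(t,1,1)$ and $(1,s,1)$ respectively, whereas $(1,p,1)$ is in general only a right adjoint. Since adjoints are unique up to isomorphism, the whole statement then follows once I show that a polynomial $P=(s,p,t)$ admits a right adjoint if and only if its middle leg $p$ is invertible: in that case the middle factor $(1,p,1)$ is an isomorphism, so $P\cong(1,1,t)\circ(s,1,1)$ is a composite of one $\Sigma$-type and one $\Delta$-type left adjoint, and its right adjoint $Q$ is forced to be the corresponding composite $(1,s,1)\circ(t,1,1)$ of right adjoints.

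It therefore remains to prove that a left-adjoint polynomial $P=(s,p,t)$ has invertible middle leg $p$, and here I follow the span proof of Proposition~\ref{spanadjprop}. Given an adjunction $P\dashv Q$, I represent the unit $\eta\colon 1_I\to Q\circ P$ concretely by presenting $Q\circ P$ as the terminal object of Proposition~\ref{unipolycomp}; this exhibits $\eta$ through honest morphisms of $\mathcal{E}$ together with the identities they satisfy. Factoring $\eta$ exactly as the span unit was factored and passing through the mates correspondence, functoriality of mates converts the resulting relations into equations among the extracted morphisms: one relation already provides a one-sided inverse for $p$ and the other supplies the reverse composite, forcing $p$ to be invertible (just as $v$, and hence $s$, were forced invertible in the span case). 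With $p$ invertible the reduction above applies, exhibiting the given adjunction as the composite of an instance of the first displayed family (at $t$) with an instance of the second (at $s$).

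I expect the main obstacle to lie in the explicit bookkeeping of the third paragraph. Unlike the span case, the composite $Q\circ P$ is assembled using a distributivity pullback in its middle, so reading off the morphisms of $\mathcal{E}$ that represent the unit, and then checking that the mate and triangle-identity equations pin down exactly the middle leg $p$ rather than some auxiliary data, is considerably more delicate than the plain pullback computation that sufficed for spans.
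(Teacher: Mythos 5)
Your first two paragraphs are sound: verifying the displayed families (e.g.\ by pushing the span adjunctions through the embedding $\mathbf{Span}(\mathcal{E})\hookrightarrow\mathbf{Poly}(\mathcal{E})$), and reducing everything, via the factorisation $(s,p,t)\cong(1,1,t)\circ(1,p,1)\circ(s,1,1)$ and uniqueness of adjoints, to the claim that a left-adjoint polynomial has invertible middle leg, is a correct repackaging of what the paper does. The gap is in your third paragraph, precisely where you flagged doubt. When you represent the unit $\eta\colon 1_I\to Q\circ P$ of an adjunction $(s,p,t)\dashv(u,q,v)$ via Proposition~\ref{unipolycomp}, factor it as in the span proof, and pass through the mates correspondence, the two relations you obtain are of the form $v\beta_2=\mathrm{id}$ (from $\eta$ being a 2-cell into $Q\circ P$, whose right leg ends in $v$) and $\beta_2 v=\mathrm{id}$ (from the triangle identity, read through mates). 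Both relations live entirely in the data of the \emph{right} adjoint: what they force to be invertible is $v$, the right leg of $(u,q,v)$. No morphism extracted from this computation composes with $p$ at all, so the claim that ``one relation already provides a one-sided inverse for $p$'' is not something the argument can deliver. Note that the same is true in the span case you are imitating: there the mates argument forces $v$ invertible, and the invertibility of $s$ is \emph{not} read off from the relations but comes from a separate step.

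That separate step is what is missing, and it is how the paper closes the argument: once $v$ is invertible there is an isomorphism of right adjoints $(u,q,v)\cong(f,g,1)$ (with $f=u$, $g=vq$). But $(f,g,1)=(1,g,1)\circ(f,1,1)$ already has a known left adjoint, namely $(g,1,f)=(1,1,f)\circ(g,1,1)$, assembled from your two displayed families; so by uniqueness of adjoints the isomorphism of right adjoints corresponds to an isomorphism of left adjoints $(g,1,f)\cong(s,p,t)$, and it is \emph{this} isomorphism that makes $p$ invertible. With that transfer inserted, your second paragraph finishes the proof exactly as you describe. So the proposal is repairable with the paper's own final move, but as written the key invertibility claim rests on a mechanism that does not exist.
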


\begin{proof}
It is simple to check that the above define adjunctions of polynomials,
indeed this is almost the same calculation as in the case of spans.
We now check that all adjunctions have this form, up to isomorphism.
To do this, suppose we are given an adjunction of polynomials
\[
\vcenter{\hbox{\xymatrix@=1.5em{ & \bullet\ar[dl]_{s}\ar[r]^{p} & \bullet\ar[rd]^{t}\\
\bullet &  &  & \bullet
}
}}\qquad\dashv\qquad\vcenter{\hbox{\xymatrix@=1.5em{ & \bullet\ar[dl]_{u}\ar[r]^{q} & \bullet\ar[rd]^{v}\\
\bullet &  &  & \bullet
}
}}
\]
and denote the unit of this adjunction by\footnote{Here the cartesian part of the morphism of polynomials is represented
using Proposition \ref{unipolycomp}.}
\[
\xymatrix@=1.5em{ &  & \bullet\ar@/_{1.2pc}/[llddd]|-{\textnormal{id}}\ar@/^{0.6pc}/[rrd]|-{\textnormal{id}}\\
 &  & \bullet\ar[ld]|-{\alpha_{1}}\ar[r]|-{p'}\ar[u]|-{e} & \bullet\ar[rd]|-{\beta_{1}}\ar[ld]|-{\alpha_{2}}\ar[r]|-{q'}\ar[dd]|-{h} & \bullet\ar[rd]|-{\beta_{2}}\ar@/^{1.2pc}/[rrdd]|-{\textnormal{id}}\\
 & \bullet\ar[dl]|-{s}\ar[r]|-{p} & \bullet\ar[rd]|-{t} &  & \bullet\ar[r]|-{q}\ar[dl]|-{u} & \bullet\ar[rd]|-{v}\\
\bullet &  &  & \bullet &  &  & \bullet
}
\]
noting that $v\beta_{2}$ is the identity. We then factor this unit
as, where $\left(\beta_{1},\beta_{2}\right)\colon\left(h,q',1\right)\to\left(u,q,v\right)$
is the cartesian morphism of polynomials pictured on the right above,
\[
\xymatrix{1\myar{}{rr} &  & \left(s,p,t\right);\left(h,q',1\right)\myar{\textnormal{id};\left(\beta_{1},\beta_{2}\right)}{rr} &  & \left(s,p,t\right);\left(u,q,v\right)}
\]
which under the mates correspondence yields two morphisms
\[
\xymatrix{\left(u,q,v\right)\myar{}{r} & \left(h,q',1\right)\myar{\left(\beta_{1},\beta_{2}\right)}{r} & \left(u,q,v\right)}
\]
which must compose to the identity; that is, a diagram below
\[
\xymatrix@=1.5em{ & \bullet\ar@/_{0.6pc}/[ddl]|-{u}\ar[r]|-{q} & \bullet\ar@/^{0.6pc}/[rdd]|-{v}\ar@{=}[d]\\
 & \bullet\ar[r]\ar[u]\ar[d] & \bullet\ar[d]|-{v}\\
\bullet & \bullet\ar[r]|-{q'}\ar[l]|-{h}\ar[d]|-{\beta_{1}} & \bullet\ar[r]|-{1}\ar[d]|-{\beta_{2}} & \bullet\\
 & \bullet\ar[r]|-{q}\ar[ul]|-{u} & \bullet\ar[ru]|-{v}
}
\]
composing to the identity, showing $\beta_{2}v$ is the identity,
and hence that $v$ is invertible. This allows us to construct an
isomorphism of right adjoints $\left(u,q,v\right)\to\left(f,g,1\right)$
for some $f$ and $g$, corresponding to an isomorphism of left adjoints
$\left(g,1,f\right)\to\left(s,p,t\right)$ and hence showing $p$
is invertible also.
\end{proof}
\begin{rem}
If we restrict ourselves to the bicategory $\mathbf{Poly}_{c}\left(\mathcal{E}\right)$
then to have the second adjunction of Proposition \ref{polyadjprop}
we require $f$ to be invertible.
\end{rem}

\subsection{Basic properties of generic bicategories\label{generic}}

The bicategories of spans $\mathbf{Span}\left(\mathcal{E}\right)$
and bicategories of polynomials with cartesian 2-cells $\mathbf{Poly}_{c}\left(\mathcal{E}\right)$
defined above both satisfy a special property: they are examples of
a bicategory $\mathscr{A}$ which contains a special class of 2-cells
(which one may informally think of as the ``diagonal'' 2-cells\footnote{Formally, these diagonals are defined as the generic morphisms against
the composition functor. See \cite{WalkerGeneric} for details.}) such that any 2-cell into a composite of 1-cells $\alpha\colon c\to a;b$
factors uniquely as some diagonal 2-cell $\delta\colon c\to l;r$
pasted with 2-cells $\alpha_{1}\colon l\to a$ and $\alpha_{2}\colon r\to b$. 

For the reader familiar with generic morphisms \cite{diers,DiersDiag,WeberGeneric},
this property can be stated concisely by asking that each composition
functor 
\[
\circ_{X,Y,Z}\colon\mathscr{A}_{Y,Z}\times\mathscr{A}_{X,Y}\to\mathscr{A}_{X,Z}
\]
admits generic factorisations. A bicategory $\mathscr{A}$ with this
property is called \emph{generic.}

As shown in \cite{WalkerGeneric}, one of the main properties of generic
bicategories $\mathscr{A}$ is that oplax functors out of them admit
an alternative description, similar to the description of a comonad.
In particular, for a locally defined functor $L\colon\mathscr{A}\to\mathscr{C}$
one may define a bijection between coherent binary and nullary oplax
constraint cells
\[
\varphi_{a,b}\colon L\left(a;b\right)\to La;Lb,\qquad\lambda_{X}\colon L1_{X}\to1_{X}
\]
and ``coherent'' comultiplication and counit maps 
\[
\Phi_{\delta}\colon Lc\to Ll;Lr,\qquad\Lambda_{\epsilon}\colon Ln\to1_{X}
\]
indexed over diagonal maps $\delta\colon c\to l;r$ and augmentations
(2-cells into identity 1-cells) $\epsilon\colon n\to1_{X}$. Indeed,
given the data $\left(\varphi,\lambda\right)$ the comultiplication
maps $\Phi_{\delta}$ and counit maps $\Lambda_{\epsilon}$ are given
by the composites
\[
\xymatrix{Lc\myar{L\delta}{r} & L\left(l;r\right)\myar{\varphi_{l,r}}{r} & Ll;Lr &  & Ln\myar{L\epsilon}{r} & L1_{X}\myar{\lambda_{X}}{r} & 1_{X}}
\]
and conversely given the data $\left(\Phi,\Lambda\right)$ the oplax
constraints $\varphi_{a,b}$ and $\lambda_{X}$ are recovered by factoring
the identity 2-cell through a diagonal as on the left below and defining
the right diagram to commute.
\[
\xymatrix@=1em{ & l;r\ar[rd]^{s_{1};s_{2}} &  &  &  &  & Ll;Lr\ar[rd]^{Ls_{1};Ls_{2}}\\
a;b\myard{\textnormal{id}}{rr}\ar[ur]^{\delta} &  & a;b &  &  & L\left(a;b\right)\myard{\varphi_{a,b}}{rr}\ar[ur]^{\Phi_{\delta}} &  & La;Lb
}
\]
Trivially, we recover each unit $\lambda_{X}\colon L\left(1_{X}\right)\to1_{X}$
as the component of $\Lambda$ at $\textnormal{id}_{1_{X}}$.

For the full statement concerning the bicategories of spans and cartesian
polynomials, see Proposition \ref{spancoh} and Proposition \ref{polycoh}
respectively.

\section{Universal properties of spans\label{unispans}}

In this section, we give a complete proof of the universal properties
of spans \cite{unispans} using the properties of generic bicategories
\cite{WalkerGeneric}. This is to demonstrate our method in the simpler
case of spans before applying it to polynomials in Section \ref{unipolycart}.

\subsection{Stating the universal property}

Before stating the universal property we recall that we have two canonical
embeddings into the bicategory of spans given by the pseudofunctors
denoted
\[
\left(-\right)_{\Sigma}:\mathcal{E}\to\mathbf{Span}\left(\mathcal{E}\right),\qquad\left(-\right)_{\Delta}:\mathcal{E}^{\textnormal{op}}\to\mathbf{Span}\left(\mathcal{E}\right).
\]
 These are defined on objects by sending an object of $\mathcal{E}$
to itself, and are defined on each morphism in $\mathcal{E}$ by the
assignments
\[
\xymatrix@R=1em{\left(-\right)_{\Sigma}\colon & X\ar[r]^{f} & Y & \mapsto & X & X\ar[l]_{1_{X}}\ar[r]^{f} & Y\\
\left(-\right)_{\Delta}\colon & X\ar[r]^{f} & Y & \mapsto & Y & X\ar[l]_{f}\ar[r]^{1_{X}} & X
}
\]
\begin{rem}
Note that the embedding $\left(-\right)_{\Sigma}$ is an example of
a pseudofunctor which is both sinister and satisfies the Beck condition.
\end{rem}

The universal property of spans is then the following result, as given
by Hermida \cite{hermida} and Dawson, Paré, and Pronk \cite[Theorem 2.15]{unispans}.
\begin{thm}
[Universal Properties of Spans] \label{unispansthm} Given a category
$\mathcal{E}$ with chosen pullbacks, composition with the canonical
embedding $\left(-\right)_{\Sigma}:\mathcal{E}\to\mathbf{Span}\left(\mathcal{E}\right)$
defines the equivalence of categories 
\[
\mathbf{Greg}\left(\mathbf{Span}\left(\mathcal{E}\right),\mathscr{C}\right)\simeq\mathbf{Sin}\left(\mathcal{E},\mathscr{C}\right)
\]
which restricts to the equivalence
\[
\mathbf{Icon}\left(\mathbf{Span}\left(\mathcal{E}\right),\mathscr{C}\right)\simeq\mathbf{Beck}\left(\mathcal{E},\mathscr{C}\right)
\]
for any bicategory $\mathscr{C}$.
\end{thm}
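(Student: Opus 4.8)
The plan is to prove that $F \mapsto F \circ (-)_\Sigma$ is an equivalence by showing it lands in $\mathbf{Sin}(\mathcal{E},\mathscr{C})$, is fully faithful on icons, and is essentially surjective via an explicit quasi-inverse built from the generic structure of $\mathbf{Span}(\mathcal{E})$; the decomposition $(s,t) = (-)_\Sigma t \cdot (-)_\Delta s$ of a span $X \xleftarrow{s} A \xrightarrow{t} Y$ together with Proposition \ref{spancoh} is what lets me replace the associativity coherence \eqref{assocdiag} by a comultiplication and a counit. For the forward direction, given a gregarious $F$ I check $F \circ (-)_\Sigma$ is sinister: by Proposition \ref{spanadjprop} each $(-)_\Sigma f$ is a left adjoint (with right adjoint $(-)_\Delta f$), and gregarious functors preserve adjunctions, so $F((-)_\Sigma f)$ acquires a right adjoint in $\mathscr{C}$. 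The composite is a genuine pseudofunctor because its binary constraint at $f,g$ factors through $\varphi_{(-)_\Sigma g,(-)_\Sigma f}$, which is invertible since $(-)_\Sigma g$ has a right adjoint; and an icon $\alpha \colon F \Rightarrow G$ restricts to the icon $\alpha(-)_\Sigma$, each of whose components is invertible by Lemma \ref{mateinv}, so it lands in invertible icons as required.

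For the quasi-inverse I start from a sinister pseudofunctor $F_\Sigma$, choose adjoints $F_\Sigma f \dashv F_\Delta f$, and form the mate (Beck) $2$-cells $\mathfrak{b}$ of all commuting squares. On $1$-cells I set $\tilde F(s,t) := F_\Sigma t \cdot F_\Delta s$, extending by functoriality on $2$-cells. Crucially I do not supply the oplax constraints directly: by Proposition \ref{spancoh} it suffices to give comultiplication maps $\Phi_\delta$ indexed by diagonals and counit maps $\Lambda_\epsilon$ indexed by augmentations. Each augmentation presents a span of the form $(a,a)$, with $\Lambda_\epsilon$ the counit $F_\Sigma a \cdot F_\Delta a \to 1$; each diagonal encodes a pullback square, and I build the corresponding $\Phi_\delta$ from a unit of the relevant adjunction pasted with the Beck $2$-cell $\mathfrak{b}$ of that square.

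The main obstacle is verifying that this $(\Phi,\Lambda)$ data satisfies the reduced coherence conditions of Proposition \ref{spancoh} and that $\tilde F$ is gregarious; this is exactly where genericity earns its keep. The associativity pentagon is replaced by a condition on composing diagonals, which unwinds into the horizontal and vertical pasting laws of the mates correspondence recalled above rather than any direct manipulation of span associators. Gregariousness amounts to $\varphi_{g,f}$ being invertible whenever $g$ has a right adjoint; by Proposition \ref{spanadjprop} such a $g$ is of $\Sigma$-type, and for these the constraint is assembled purely from invertible pseudofunctoriality data and triangle identities, with no genuine Beck $2$-cell surviving. Comparing the two round-trips on objects, on the generators $(-)_\Sigma f$ and $(-)_\Delta f$, and on $2$-cells yields the isomorphisms witnessing essential surjectivity, while fully faithfulness on icons follows since an invertible icon $F_\Sigma \Rightarrow G_\Sigma$ extends uniquely to a span icon: its component at $(-)_\Delta f$ is forced to be the mate-inverse of its component at $(-)_\Sigma f$, again by Lemma \ref{mateinv}.

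Finally, for the restriction $\mathbf{Icon}(\mathbf{Span}(\mathcal{E}),\mathscr{C}) \simeq \mathbf{Beck}(\mathcal{E},\mathscr{C})$, I observe that $\tilde F$ is a pseudofunctor precisely when \emph{every} $\varphi_{g,f}$ is invertible, including those with $g$ not of $\Sigma$-type. Since span composition is by pullback, such a constraint decomposes into invertible pseudofunctoriality pieces together with the Beck $2$-cell $\mathfrak{b}_{f,g}^{f',g'}$ of the governing pullback square; hence all constraints are invertible exactly when all these Beck $2$-cells are, which is the Beck condition of Definition \ref{defSinBeck}. Thus the first equivalence restricts to the second.
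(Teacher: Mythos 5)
Your proposal is correct and follows essentially the same route as the paper: both reduce the oplax structure on a functor $\mathbf{Span}\left(\mathcal{E}\right)\to\mathscr{C}$ to comultiplication/counit data via Proposition \ref{spancoh}, build that data from the units and counits of the chosen adjunctions $F_{\Sigma}f\dashv F_{\Delta}f$ together with functoriality of mates, use Lemma \ref{mateinv} to force $\alpha_{\Delta_{f}}=\left(\alpha_{\Sigma_{f}}^{-1}\right)^{*}$ for full faithfulness, and identify the binary constraints with whiskered Beck 2-cells (the paper's Lemma \ref{oplaxBeck}) to obtain the restriction to $\mathbf{Beck}\left(\mathcal{E},\mathscr{C}\right)$. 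The only cosmetic difference is that the paper defines each $\Phi_{s,h,t}$ as a whiskered unit alone, with the Beck 2-cell appearing only afterwards when the constraint $\varphi$ is recovered, whereas you fold the Beck 2-cell into the definition of $\Phi_{\delta}$ directly; these agree by the factorisation $\Phi_{\delta}=\varphi_{l,r}\circ L\delta$ recalled in Subsection \ref{generic}.
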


\subsection{Proving the universal property}

Before proving Theorem \ref{unispansthm} we will need to show that
given a sinister pseudofunctor $\mathcal{E}\to\mathscr{C}$ one may
reconstruct an oplax functor $\mathbf{Span}\left(\mathcal{E}\right)\to\mathscr{C}$.
The following lemma and subsequent propositions describe this construction.
\begin{lem}
\label{spanlocally} Let $\mathcal{E}$ be a category with pullbacks
seen as a locally discrete 2-category, and let $\mathscr{C}$ be a
bicategory. Suppose $F\colon\mathcal{E}\to\mathscr{C}$ is a given
sinister pseudofunctor, and for each morphism $f\in\mathcal{E}$ define
$F_{\Sigma}f:=Ff$ and take $F_{\Delta}f$ to be a chosen right adjoint
of $Ff$ (choosing $F_{\Delta}$ to strictly preserve identities).
We may then define local functors
\[
L_{X,Y}:\mathbf{Span}\left(\mathcal{E}\right)_{X,Y}\to\mathscr{C}_{LX,LY},\qquad X,Y\in\mathcal{E}
\]
by the assignment $T\mapsto FT$ on objects, and
\[
\xymatrix@=1.5em{ & T\ar[dl]_{s}\ar[dd]|-{f}\ar[dr]^{t} &  &  &  & FT\ar[dr]^{F_{\Sigma}t}\\
X &  & Y & \mapsto & FX\ar[ur]^{F_{\Delta}s}\ar[dr]_{F_{\Delta}u} & \;\ar@{}[l]|-{\Downarrow\alpha}\ar@{}[r]|-{\Downarrow\gamma} & FY\\
 & S\ar[ul]^{u}\ar[ur]_{v} &  &  &  & FS\ar[uu]|-{F_{\Delta}f}\ar[ur]_{F_{\Sigma}v}
}
\]
on morphisms, where $\alpha$ is the mate of the isomorphism on the
left below
\[
\xymatrix{FT\ar[r]^{1_{FT}}\ar[d]_{F_{\Sigma}s}\ar@{}[dr]|-{\cong} & FT\ar[d]^{F_{\Sigma}u\cdot F_{\Sigma}f} &  &  & FT\ar[r]^{F_{\Sigma}t}\ar[d]_{F_{\Sigma}f}\ar@{}[dr]|-{\cong} & FY\ar[d]^{1_{FY}}\\
FX\ar[r]_{1_{FX}} & FX &  &  & FS\ar[r]_{F_{\Sigma}v} & FY
}
\]
under the adjunctions $F_{\Sigma}s\dashv F_{\Delta}s$ and $F_{\Sigma}u\cdot F_{\Sigma}f\dashv F_{\Delta}f\cdot F_{\Delta}u$,
and $\gamma$ is the mate of the isomorphism on the right above under
the adjunctions $F_{\Sigma}f\dashv F_{\Delta}f$ and $\textnormal{1}_{FY}\dashv\textnormal{1}_{FY}$. 
\end{lem}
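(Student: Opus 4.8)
The plan is to check that $L_{X,Y}$ is a well-defined functor, the substance being functoriality, since on objects it simply sends a span $(s,T,t)$ to the composite $F_{\Sigma}t\cdot F_{\Delta}s\colon FX\to FY$ and on a span morphism $f$ it outputs the pasting $L_{X,Y}f=(\gamma\cdot F_{\Delta}u)\circ(F_{\Sigma}t\cdot\alpha)$. First I would confirm the types: as $F$ is sinister the adjunctions $F_{\Sigma}s\dashv F_{\Delta}s$, $F_{\Sigma}u\cdot F_{\Sigma}f\dashv F_{\Delta}f\cdot F_{\Delta}u$ and $F_{\Sigma}f\dashv F_{\Delta}f$ all exist, so the mates $\alpha\colon F_{\Delta}s\Rightarrow F_{\Delta}f\cdot F_{\Delta}u$ and $\gamma\colon F_{\Sigma}t\cdot F_{\Delta}f\Rightarrow F_{\Sigma}v$ are well-formed and their pasting has the required boundary $F_{\Sigma}t\cdot F_{\Delta}s\Rightarrow F_{\Sigma}v\cdot F_{\Delta}u$.

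For preservation of identities I would specialise to $f=1_{T}$, so that $u=s$, $v=t$ and $S=T$. Since $F$ is a pseudofunctor its unit constraints are invertible, and with the stipulated convention that $F_{\Delta}$ preserves identities strictly, the two defining isomorphisms reduce to coherence constraints at an identity. The mate of such an identity square is again an identity; this is exactly the observation recorded in the remark of the mates subsection, that the unit and counit of an adjunction arise as mates of identity squares. Hence both $\alpha$ and $\gamma$ become identity $2$-cells and $L_{X,Y}1_{T}$ is the identity.

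The substance is preservation of composition. Given span morphisms $f\colon(s,T,t)\to(u,S,v)$ and $g\colon(u,S,v)\to(w,R,x)$ with composite $gf$, I must prove $L_{X,Y}g\cdot L_{X,Y}f=L_{X,Y}(gf)$ as $2$-cells $F_{\Sigma}t\cdot F_{\Delta}s\Rightarrow F_{\Sigma}x\cdot F_{\Delta}w$, and the strategy is to reduce everything to functoriality of mates. Each of $\alpha_{f},\alpha_{g},\alpha_{gf}$ is the mate of a pseudofunctoriality isomorphism of $F$ attached to one of the commuting triangles $s=uf$, $u=wg$, $s=w(gf)$, and dually each of $\gamma_{f},\gamma_{g},\gamma_{gf}$ is the mate of the constraint coming from $t=vf$, $v=xg$, $t=x(gf)$. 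Taking the right adjoint of the constraint $F_{\Sigma}(gf)\cong F_{\Sigma}g\cdot F_{\Sigma}f$ produces a canonical invertible comparison $\theta_{g,f}\colon F_{\Delta}(gf)\cong F_{\Delta}f\cdot F_{\Delta}g$, obtained as its mate; this is the composition constraint exhibiting the chosen right adjoints as a pseudofunctor $F_{\Delta}\colon\mathcal{E}^{\textnormal{op}}\to\mathscr{C}$. Using the horizontal and vertical pasting compatibilities of the mates correspondence together with the associativity coherence of $F$ applied to $s=w\cdot g\cdot f$ and to $t=x\cdot g\cdot f$, I would establish the two factorisations
\[
\alpha_{gf}=(\theta_{g,f}^{-1}\cdot F_{\Delta}w)\circ(F_{\Delta}f\cdot\alpha_{g})\circ\alpha_{f},\qquad\gamma_{gf}=\gamma_{g}\circ(\gamma_{f}\cdot F_{\Delta}g)\circ(F_{\Sigma}t\cdot\theta_{g,f}).
\]
Substituting these into $L_{X,Y}(gf)=(\gamma_{gf}\cdot F_{\Delta}w)\circ(F_{\Sigma}t\cdot\alpha_{gf})$, the comparison cells $F_{\Sigma}t\cdot\theta_{g,f}\cdot F_{\Delta}w$ and $F_{\Sigma}t\cdot\theta_{g,f}^{-1}\cdot F_{\Delta}w$ meet along the shared edge $F_{\Sigma}t\cdot F_{\Delta}(gf)\cdot F_{\Delta}w$ and cancel; what remains agrees with $L_{X,Y}g\cdot L_{X,Y}f$ after a single application of the interchange law between $\gamma_{f}$ and $\alpha_{g}$.

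I expect the two factorisation identities to be the main obstacle, since they require the $F_{\Delta}$-side computation (the $\alpha$'s) and the $F_{\Sigma}$-side computation (the $\gamma$'s) to be arranged so that the same comparison $\theta_{g,f}$ appears with opposite variance and hence cancels. Conceptually, though, no genuinely new coherence enters: every equality invoked is an instance of functoriality of mates or an axiom of the pseudofunctor $F$, so the residual work is bookkeeping with associators rather than the discovery of new structure.
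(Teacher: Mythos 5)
Your proposal is correct and takes essentially the same route as the paper, whose entire proof is the one-line assertion that functoriality is clear from functoriality of mates together with the associativity and unitary conditions on $F$. Your two factorisation identities for $\alpha_{gf}$ and $\gamma_{gf}$ via the mate $\theta_{g,f}$ of the composition constraint, the cancellation of $\theta_{g,f}$ against $\theta_{g,f}^{-1}$, and the final interchange between $\gamma_{f}$ and $\alpha_{g}$ are precisely the details that the paper's appeal to functoriality of mates leaves implicit.
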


\begin{proof}
Functoriality is clear from functoriality of mates and the associativity
condition and unitary conditions on $F$.
\end{proof}
To show that these local functors can be endowed with the structure
of an oplax functor it will be useful to recall the following reduced
description of such an oplax structure, obtained via the theory of
Subsection \ref{generic}.
\begin{prop}
\cite{WalkerGeneric}\label{spancoh} Let $\mathcal{E}$ be a category
with pullbacks and denote by $\mathbf{Span}\left(\mathcal{E}\right)$
the bicategory of spans in $\mathcal{E}$. Let $\mathscr{C}$ be a
bicategory. Then to give an oplax functor
\[
L\colon\mathbf{Span}\left(\mathcal{E}\right)\to\mathscr{C}
\]
 is to give a locally defined functor
\[
L_{X,Y}\colon\mathbf{Span}\left(\mathcal{E}\right)_{X,Y}\to\mathscr{C}_{LX,LY},\qquad X,Y\in\mathcal{E}
\]
with comultiplication and counit maps 
\[
\Phi_{s,h,t}\colon L\left(s,t\right)\to L\left(s,h\right);L\left(h,t\right),\qquad\Lambda_{h}\colon L\left(h,h\right)\to1_{LX}
\]
for every respective diagram in $\mathcal{E}$ 
\[
\xymatrix@=1.5em{ & T\ar[rd]^{t}\ar[ld]_{s}\ar[d]|-{h} &  &  &  & T\ar[d]^{h}\\
X & Y & Z &  &  & X
}
\]
such that:
\begin{enumerate}
\item for any triple of morphisms of spans as below
\[
\xymatrix@=1.5em{ & R\ar[dl]_{u}\ar[dr]^{v}\ar[dd]|-{f} &  &  & R\ar[dl]_{u}\ar[dr]^{k}\ar[dd]|-{f} &  &  & R\ar[dl]_{k}\ar[dr]^{v}\ar[dd]|-{f}\\
X &  & Z & X &  & Y & Y &  & Z\\
 & T\ar[ul]^{s}\ar[ru]_{t} &  &  & T\ar[ul]^{s}\ar[ru]_{h} &  &  & T\ar[ul]^{h}\ar[ru]_{t}
}
\]
we have the commuting diagram
\[
\xymatrix{L\left(u,v\right)\ar[d]_{Lf}\myar{\Phi_{u,k,v}}{rr} &  & L\left(u,k\right);L\left(k,v\right)\ar[d]^{Lf;Lf}\\
L\left(s,t\right)\myard{\Phi_{s,h,t}}{rr} &  & L\left(s,h\right);L\left(h,t\right)
}
\]
\item for any morphism of spans as on the left below
\[
\xymatrix@=1.5em{ & M\ar[dl]_{p}\ar[dr]^{p}\ar[d]|-{f} &  &  & L\left(p,p\right)\ar[rr]^{Lf}\ar[rd]_{\Lambda_{p}} &  & L\left(q,q\right)\ar[ld]^{\Lambda_{q}}\\
X & N\ar[l]^{q}\ar[r]_{q} & X &  &  & 1_{LX}
}
\]
the diagram on the right above commutes;
\item for all diagrams of the form
\[
\xymatrix@=1.5em{ &  & T\ar[dll]_{s}\ar[drr]^{t}\ar[dl]^{h}\ar[dr]_{k}\\
W & X &  & Y & Z
}
\]
in $\mathcal{E}$, we have the commuting diagram
\[
\xymatrix{L\left(s,t\right)\ar[d]_{\Phi_{s,h,t}}\ar@{=}[rr] &  & L\left(s,t\right)\ar[d]^{\Phi_{s,k,t}}\\
L\left(s,h\right);L\left(h,t\right)\myard{L\left(s;h\right);\Phi_{h,k,t}}{d} &  & L\left(s,k\right);L\left(k,t\right)\ar[d]^{\Phi_{s,h,k};L\left(k;t\right)}\\
L\left(s,h\right);\left(L\left(h,k\right);L\left(k,t\right)\right)\myard{\textnormal{assoc}}{rr} &  & \left(L\left(s,h\right);L\left(h,k\right)\right);L\left(k,t\right)
}
\]
\item for all spans $\left(s,t\right)$ we have the commuting diagrams
\[
\xymatrix@=0.5em{ & L\left(s,s\right);L\left(s,t\right)\ar[rdd]^{\Lambda_{s};L\left(s,t\right)} &  &  &  & L\left(s,t\right);L\left(t,t\right)\ar[rdd]^{L\left(s,t\right);\Lambda_{t}}\\
\\
L\left(s,t\right)\myard{\textnormal{unitor}}{rr}\ar[uur]^{\Phi_{s,s,t}} &  & 1_{LX};L\left(s,t\right) &  & L\left(s,t\right)\myard{\textnormal{unitor}}{rr}\ar[uur]^{\Phi_{s,t,t}} &  & L\left(s,t\right);1_{LY}
}
\]
\end{enumerate}
\end{prop}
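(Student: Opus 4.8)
The plan is to obtain this as a direct specialisation of the general correspondence for generic bicategories recalled in Subsection \ref{generic}; the content of the proof is not a fresh coherence computation but rather (a) making the generic data in $\mathbf{Span}(\mathcal{E})$ explicit, and (b) checking that the abstract comonad-style coherences become precisely (1)--(4). So first I would identify the diagonals and augmentations. Given a span $(s,t)\colon X\to Z$ with apex $T$ together with a morphism $h\colon T\to Y$, the composite $(s,h);(h,t)$ has apex the pullback $T\times_Y T$, and the diagonal $T\to T\times_Y T$ induces a 2-cell $\delta_{s,h,t}\colon (s,t)\to (s,h);(h,t)$; these are exactly the generic morphisms into binary composites. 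Dually, the augmentations $\epsilon_h\colon (h,h)\to 1_X$ into identity spans are those 2-cells determined by a single morphism $h\colon T\to X$. I would then confirm that $\mathbf{Span}(\mathcal{E})$ is generic by showing that an arbitrary 2-cell $(s,t)\to a;b$ into a composite factors uniquely as such a $\delta$ followed by a pair of 2-cells into $a$ and $b$; concretely this is just the universal property of the pullback defining the composite $a;b$.

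Second, I would invoke the bijection of Subsection \ref{generic}. With the generic data identified, giving coherent oplax constraints $(\varphi,\lambda)$ on the locally defined functor $L$ is the same as giving the families $\Phi_{s,h,t}:=\varphi_{(s,h),(h,t)}\circ L\delta_{s,h,t}$ and $\Lambda_h:=\lambda_X\circ L\epsilon_h$, with $\varphi$ and $\lambda$ reconstructed by factoring identity 2-cells through diagonals exactly as described there. Genericity is essential here: it is what guarantees that the reconstruction of $\varphi_{a,b}$ for \emph{arbitrary} composable spans $a,b$ (whose composite apex is a general pullback, not of the special form $T\times_Y T$) is well defined and independent of the chosen diagonal factorisation.

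Third, and this is where I expect the main obstacle, I would translate the abstract oplax axioms --- the associativity pentagon \eqref{assocdiag} and the two unit triangles --- into conditions (1)--(4). The expectation, matching the comonad analogy, is that naturality of $\varphi$ and $\lambda$ in their arguments yields the naturality of $\Phi$ and $\Lambda$ with respect to morphisms of spans, i.e.\ conditions (1) and (2); that the pentagon yields the coassociativity law (3); and that the unit triangles yield the counit laws (4). The delicate point is to verify these are genuinely \emph{equivalent} to, not merely consequences of, the oplax axioms: one must check that every instance of the pentagon and of the unit triangles is recovered by pasting instances of (1)--(4) against suitably chosen diagonal factorisations, which is a diagram chase tracking the pullbacks and diagonal morphisms occurring in each composite. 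Because $\mathbf{Span}(\mathcal{E})$ is generic, each such chase reduces to an identity among generic morphisms, so no coherence beyond (1)--(4) can arise, and the correspondence is a bijection.
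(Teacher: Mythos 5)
Your proposal is correct and takes essentially the same approach as the paper: Proposition \ref{spancoh} is given there purely as a citation to \cite{WalkerGeneric}, i.e.\ as the specialisation to $\mathbf{Span}\left(\mathcal{E}\right)$ of the general oplax-functor correspondence for generic bicategories recalled in Subsection \ref{generic}, which is exactly the route you follow. Your identification of the generic data (the diagonals $\left(s,t\right)\to\left(s,h\right);\left(h,t\right)$ induced by $T\to T\times_{Y}T$ and the augmentations $\left(h,h\right)\to 1_{X}$), the verification of genericity via the universal property of the pullback defining span composition, and your dictionary matching naturality of $\varphi,\lambda$ to conditions (1)--(2), the pentagon to (3), and the unit triangles to (4) are precisely the details the paper delegates to \cite{WalkerGeneric}.
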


We now prove that the locally defined functor $L$ above may be endowed
with an oplax structure.
\begin{prop}
\label{spanLoplax} Let $\mathcal{E}$ be a category with pullbacks
seen as a locally discrete 2-category, and let $\mathscr{C}$ be a
bicategory. Suppose $F\colon\mathcal{E}\to\mathscr{C}$ is a given
sinister pseudofunctor. Then the locally defined functor
\[
L_{X,Y}:\mathbf{Span}\left(\mathcal{E}\right)_{X,Y}\to\mathscr{C}_{LX,LY},\qquad X,Y\in\mathcal{E}
\]
as in Lemma \ref{spanlocally} canonically admits the structure of
an oplax functor.
\end{prop}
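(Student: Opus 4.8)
The plan is to invoke the reduced description of oplax functors out of $\mathbf{Span}\left(\mathcal{E}\right)$ furnished by Proposition \ref{spancoh}: rather than verifying the full associativity and unit diagrams, it suffices to equip the local functors $L_{X,Y}$ of Lemma \ref{spanlocally} with comultiplication maps $\Phi_{s,h,t}\colon L\left(s,t\right)\to L\left(s,h\right);L\left(h,t\right)$ and counit maps $\Lambda_{h}\colon L\left(h,h\right)\to1_{LX}$ satisfying conditions (1)--(4). Since $L\left(s,t\right)=F_{\Sigma}t\cdot F_{\Delta}s$ and $L\left(s,h\right);L\left(h,t\right)=\left(F_{\Sigma}t\cdot F_{\Delta}h\right)\cdot\left(F_{\Sigma}h\cdot F_{\Delta}s\right)$, while $L\left(h,h\right)=F_{\Sigma}h\cdot F_{\Delta}h$, there is an evident choice of data: take $\Phi_{s,h,t}$ to be the unit $\eta_{h}\colon1_{FT}\to F_{\Delta}h\cdot F_{\Sigma}h$ of the chosen adjunction $F_{\Sigma}h\dashv F_{\Delta}h$, whiskered on the left by $F_{\Sigma}t$ and on the right by $F_{\Delta}s$ (then reassociated), and take $\Lambda_{h}$ to be the counit $\epsilon_{h}\colon F_{\Sigma}h\cdot F_{\Delta}h\to1$ of that same adjunction. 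The point of this choice is that it bypasses the diagonal morphism of spans and the constraint $\varphi$ entirely, reducing everything to the formal behaviour of the units and counits.

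With these definitions the two purely formal conditions are immediate and require none of the pseudofunctoriality constraints of $F$. Condition (4) asks that the composite of $\Phi_{s,s,t}$ with $\Lambda_{s};L\left(s,t\right)$, and dually of $\Phi_{s,t,t}$ with $L\left(s,t\right);\Lambda_{t}$, collapse to the unitors; unwinding the whiskerings, these two equations are exactly the two triangle identities for the adjunctions $F_{\Sigma}s\dashv F_{\Delta}s$ and $F_{\Sigma}t\dashv F_{\Delta}t$. Condition (3), coassociativity, compares the two ways of factoring $L\left(s,t\right)$ through both $h$ and $k$; on the level of $1$-cells both routes produce the string $F_{\Sigma}t\cdot F_{\Delta}k\cdot F_{\Sigma}k\cdot F_{\Delta}h\cdot F_{\Sigma}h\cdot F_{\Delta}s$ by inserting the units $\eta_{h}$ and $\eta_{k}$, and since these are whiskered into disjoint (nested) regions, the two orders of insertion agree by the middle-four interchange law.

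The substance of the argument lies in the naturality conditions (1) and (2), which assert that $\Phi$ and $\Lambda$ are natural with respect to the functorial action $Lf$ on morphisms of spans; I expect this to be the main obstacle. Here I would lean on functoriality of mates. Recall from Lemma \ref{spanlocally} that $Lf$ is itself a pasting of the mates $\alpha$ and $\gamma$ (which is where the pseudofunctoriality isomorphisms $F_{\Sigma}u\cdot F_{\Sigma}f\cong F_{\Sigma}s$ and $F_{\Sigma}v\cdot F_{\Sigma}f\cong F_{\Sigma}t$ enter), and that, by the remark identifying units and counits as instances of the mates correspondence, $\Phi_{s,h,t}$ and $\Lambda_{h}$ are themselves mates of identity $2$-cells. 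Each square in (1) and (2) may therefore be rewritten as an equality of two pastings of mates and discharged by the horizontal and vertical functoriality of the mates correspondence together with naturality of the units and counits. The delicate case is condition (1): there $\Phi_{u,k,v}$ and $\Phi_{s,h,t}$ invoke the distinct adjunctions $F_{\Sigma}k\dashv F_{\Delta}k$ and $F_{\Sigma}h\dashv F_{\Delta}h$ (with $k=hf$), so the comparison must carry the unit across the mate $\alpha$ appearing in $Lf$; organising this as a single functoriality-of-mates computation, rather than expanding every constraint by hand, is precisely what keeps the verification tractable. Functoriality of the assignment $F\mapsto L$ then follows from the uniqueness clause of Proposition \ref{spancoh}.
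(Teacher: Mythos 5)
Your proposal is correct and follows essentially the same route as the paper: it invokes Proposition \ref{spancoh}, defines $\Phi_{s,h,t}$ as the whiskered unit $\eta_{Fh}$ and $\Lambda_{h}$ as the whiskered counit $\epsilon_{Fh}$, disposes of coassociativity by interchange of the disjointly whiskered units, of the unit axioms by the triangle identities, and of the naturality conditions (1)--(2) by functoriality of mates, exactly as in the paper's argument. The only superfluous element is the closing appeal to a ``uniqueness clause'' of Proposition \ref{spancoh} for functoriality of $F\mapsto L$, which is not needed for (and not part of) the statement being proved.
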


\begin{proof}
By Proposition \ref{spancoh}, to equip the locally defined functor
$L$ with an oplax structure is to give comultiplication maps $\Phi_{s,h,t}\colon L\left(s,t\right)\to L\left(s,h\right);L\left(h,t\right)$
and counit maps $\Lambda_{h}\colon L\left(h,h\right)\to1_{LX}$ for
diagrams of the respective forms
\[
\xymatrix@=1.5em{ & T\ar[dl]_{s}\ar[dr]^{t}\ar[d]|-{h} &  &  &  &  & T\ar[rd]^{h}\ar[ld]_{h}\\
X & Y & Z &  &  & X &  & X
}
\]
satisfying naturality, associativity, and unitary conditions. To do
this, we take each $\Phi_{s,h,t}$ and $\Lambda_{h}$ to be the respective
pastings
\begin{equation}
\xymatrix{ &  & \mbox{\;} &  &  &  & FT\ar[rd]^{F_{\Sigma}h} & \;\\
FX\ar[r]^{F_{\Delta}s} & FT\ar[r]^{F_{\Sigma}h}\ar@/^{2.9pc}/[rr]^{1_{FT}} & FY\ar[r]^{F_{\Delta}h}\ar@{}[u]|-{\Downarrow\eta_{Fh}} & FT\ar[r]^{F_{\Sigma}t} & FZ & FX\ar[rr]^{1_{FX}}\ar[ur]^{F_{\Delta}h} & \ar@{}[u]|-{\Downarrow\epsilon_{Fh}} & FX
}
\label{spanlax}
\end{equation}
Associativity of comultiplication is trivial; indeed, given a diagram
of the form
\[
\xymatrix@=1.5em{ &  & T\ar[dll]_{s}\ar[drr]^{t}\ar[dl]^{h}\ar[dr]_{k}\\
W & X &  & Y & Z
}
\]
the pasting
\[
\xymatrix{ &  & \mbox{\;} &  & \;\\
FW\ar[r]^{F_{\Delta}s} & FT\ar[r]^{F_{\Sigma}h}\ar@/^{2.5pc}/[rr]^{\textnormal{id}_{FT}} & FX\ar[r]^{F_{\Delta}h}\ar@{}[u]|-{\Downarrow\eta_{Fh}} & FT\ar[r]^{F_{\Sigma}k}\ar@/^{2.5pc}/[rr]^{\textnormal{id}_{FT}} & FY\ar[r]^{F_{\Delta}k}\ar@{}[u]|-{\Downarrow\eta_{Fk}} & FT\ar[r]^{F_{\Sigma}t} & FZ
}
\]
evaluates to the same 2-cell regardless if we paste the bottom path
with first $\eta_{Fh}$ and then $\eta_{Fk}$, or vice versa. The
unitary axioms are also trivial, an immediate consequence of the triangle
identities for an adjunction.

For the naturality condition, suppose we are given a triple of morphisms
of spans
\[
\xymatrix@=1.5em{ & R\ar[dl]_{u}\ar[dr]^{v}\ar[dd]|-{f} &  &  &  & R\ar[dl]_{u}\ar[dr]^{k}\ar[dd]|-{f} &  &  &  & R\ar[dl]_{k}\ar[dr]^{v}\ar[dd]|-{f}\\
X &  & Z &  & X &  & Y &  & Y &  & Z\\
 & T\ar[ul]^{s}\ar[ru]_{t} &  &  &  & T\ar[ul]^{s}\ar[ru]_{h} &  &  &  & T\ar[ul]^{h}\ar[ru]_{t}
}
\]
and note that we have the commuting diagram
\[
\xymatrix{L\left(u,v\right)\ar[d]_{Lf}\myar{\Phi_{u,k,v}}{rr} &  & L\left(u,k\right);L\left(k,v\right)\ar[d]^{Lf;Lf}\\
L\left(s,t\right)\myard{\Phi_{s,h,t}}{rr} &  & L\left(s,h\right);L\left(h,t\right)
}
\]
since the top composite is
\[
\xymatrix{ & FR\ar[rr]|-{1_{FR}}\ar[rd]|-{F_{\Sigma}k} & \;\ar@{}[d]|-{\Downarrow\eta_{Fk}} & FR\ar[rd]|-{F_{\Sigma}v}\\
FX\ar[ur]|-{F_{\Delta}u}\ar[rd]|-{F_{\Delta}s}\ar@{}[r]|-{\Downarrow} & \ar@{}[r]|-{\Downarrow} & FY\ar[rd]|-{F_{\Delta}h}\ar@{}[r]|-{\Downarrow}\ar[ur]|-{F_{\Delta}k} & \;\ar@{}[r]|-{\Downarrow} & FZ\\
 & FT\ar[ur]|-{F_{\Sigma}h}\ar[uu]|-{F_{\Delta}f} &  & FT\ar[ur]|-{F_{\Sigma}t}\ar[uu]|-{F_{\Delta}f}
}
\]
and the bottom composite is 

\[
\xymatrix{ & FR\ar[rr]|-{1_{FR}} &  & FR\ar[rd]|-{F_{\Sigma}v}\\
FX\ar[ur]|-{F_{\Delta}u}\ar[rd]|-{F_{\Delta}s}\ar@{}[r]|-{\Downarrow} & \ar@{}[rr]|-{=} &  & \;\ar@{}[r]|-{\Downarrow} & FZ\\
 & FT\ar[rr]|-{1_{FT}}\ar[uu]|-{F_{\Delta}f}\ar[dr]|-{F_{\Sigma}h} & \ar@{}[d]|-{\Downarrow\eta_{Fh}} & FT\ar[ur]|-{F_{\Sigma}t}\ar[uu]|-{F_{\Delta}f}\\
 &  & FY\ar[ur]|-{F_{\Delta}h}
}
\]
where the unlabeled 2-cells are as in Lemma \ref{spanlocally}. That
these pastings agree is a standard functoriality of mates calculation.
We omit the naturality of counits calculation, as it is a simpler
functoriality of mates calculation.
\end{proof}
\begin{rem}
It is trivial that each $\lambda_{X}$ given by $\Lambda$ at $1_{X}$
is invertible above.
\end{rem}

We now check that the structure given above has its oplax constraints
given by Beck 2-cells.
\begin{lem}
\label{oplaxBeck} Let the oplax functor $L\colon\mathbf{Span}\left(\mathcal{E}\right)\to\mathscr{C}$
be constructed as in Proposition \ref{spanLoplax}. Then the binary
oplax constraint cell on $L$, at a composite of spans constructed
as below
\begin{equation}
\xymatrix@=1.5em{ &  & M\ar[ld]_{c'}\ar[rd]^{b'}\ar@{}[dd]|-{\textnormal{pb}}\\
 & T\ar[ld]_{a}\ar[rd]^{b} &  & S\ar[ld]_{c}\ar[rd]^{d}\\
X &  & Y &  & Z
}
\label{spancomp}
\end{equation}
is given by the Beck 2-cell for the pullback appropriately whiskered
by $F_{\Delta}a$ and $F_{\Sigma}d$.
\end{lem}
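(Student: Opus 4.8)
The plan is to compute $\varphi$ directly from the reduced oplax structure of Proposition \ref{spancoh}, using the recipe of Subsection \ref{generic} which recovers the binary constraint by factoring an identity 2-cell through a diagonal. First I would identify the relevant diagonal. Writing the composite span \eqref{spancomp} with vertex $M$, legs $ac'$ and $db'$, and middle map $h=bc'=cb'\colon M\to Y$, the canonical diagonal is the morphism of spans $\delta\colon(ac',db')\to(ac',bc');(bc',db')$ induced by the diagonal $M\to M\times_Y M$, whose two projections are precisely the span morphisms $c'\colon(ac',bc')\to(a,b)$ and $b'\colon(bc',db')\to(c,d)$. A check on vertices shows $(c';b')\circ\delta=\textnormal{id}$, so by the recipe $\varphi=(Lc';Lb')\circ\Phi_{ac',bc',db'}$.

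Next I would expand each factor. By \eqref{spanlax}, $\Phi_{ac',bc',db'}$ is the insertion of the unit $\eta_{F(bc')}$ of the adjunction $F_{\Sigma}(bc')\dashv F_{\Delta}(bc')$ between $F_{\Delta}(ac')$ and $F_{\Sigma}(db')$. By Lemma \ref{spanlocally}, $Lc'$ is the pasting of $\alpha_{c'}\colon F_{\Delta}(ac')\Rightarrow F_{\Delta}c'\cdot F_{\Delta}a$ (the $F_{\Delta}$-pseudoconstraint, being the mate of the $F_{\Sigma}$-pseudoconstraint) with $\gamma_{c'}\colon F_{\Sigma}(bc')\cdot F_{\Delta}c'\Rightarrow F_{\Sigma}b$ (the mate of $F_{\Sigma}(bc')\cong F_{\Sigma}b\cdot F_{\Sigma}c'$, hence built from that iso and the counit $\epsilon_{Fc'}$); symmetrically $Lb'$ is assembled from $\alpha_{b'}\colon F_{\Delta}(bc')\Rightarrow F_{\Delta}b'\cdot F_{\Delta}c$, the iso $F_{\Sigma}(bc')\cong F_{\Sigma}c\cdot F_{\Sigma}b'$, and the counit $\epsilon_{Fb'}$. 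The outer legs $F_{\Delta}a$ and $F_{\Sigma}d$ are untouched throughout, so they appear only as whiskering.

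The heart of the argument is to collapse this pasting to $F_{\Sigma}d\cdot\mathfrak{b}_{b,c}^{b',c'}\cdot F_{\Delta}a$, where $\mathfrak{b}_{b,c}^{b',c'}$ is the Beck 2-cell of Definition \ref{defSinBeck} for the pullback square with legs $b',c',b,c$. I would factor the composite unit $\eta_{F(bc')}$ through the presentation $F_{\Sigma}(bc')\cong F_{\Sigma}c\cdot F_{\Sigma}b'$ of the adjunction, so that it becomes $\eta_{Fb'}$ followed by $F_{\Delta}b'\cdot\eta_{Fc}\cdot F_{\Sigma}b'$. The $\eta_{Fb'}$ summand then cancels against the counit $\epsilon_{Fb'}$ hidden in $\gamma_{b'}$ by a triangle identity, leaving only $\eta_{Fc}$; the two occurrences of the factorisation isos of $F_{\Sigma}(bc')$ (via $b,c'$ inside $\gamma_{c'}$ and via $c,b'$ inside $\alpha_{b'}$) compose to exactly the coherence iso $\kappa\colon F_{\Sigma}b\cdot F_{\Sigma}c'\cong F_{\Sigma}c\cdot F_{\Sigma}b'$ of the square; and the counit $\epsilon_{Fc'}$ survives from $\gamma_{c'}$. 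What remains, namely $\eta_{Fc}$, $\kappa$ and $\epsilon_{Fc'}$ whiskered by $F_{\Sigma}d$ and $F_{\Delta}a$, is precisely the mate of $\kappa$ under the adjunctions $F_{\Sigma}c'\dashv F_{\Delta}c'$ and $F_{\Sigma}c\dashv F_{\Delta}c$, that is, the whiskered Beck 2-cell. All of these manipulations are instances of functoriality of mates together with the triangle identities and the pseudofunctoriality coherence of $F$.

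I expect the main obstacle to be the bookkeeping around the composite adjunction $F_{\Sigma}(bc')\dashv F_{\Delta}(bc')$: one must verify that its unit $\eta_{F(bc')}$ is coherently expressed through the $(c,b')$-factorisation, so that exactly the $b'$-unit/counit pair cancels and the square iso $\kappa$ is assembled correctly (as opposed to some other coherence isomorphism). Once this identification of $\eta_{F(bc')}$ is in place, the remainder is a routine functoriality of mates calculation, of the same flavour as the naturality computation already carried out in Proposition \ref{spanLoplax}.
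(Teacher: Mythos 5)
Your proposal is correct and follows essentially the same route as the paper: identify the diagonal $\delta_{ac',h,db'}$ together with the span morphisms $c'$ and $b'$ satisfying $(c';b')\circ\delta=\textnormal{id}$, conclude $\varphi=(Lc';Lb')\circ\Phi_{ac',h,db'}$, and then reduce this pasting to the whiskered Beck 2-cell via functoriality of mates. The only difference is one of detail: the paper dismisses that last step as "an easy consequence of functoriality of mates," whereas you actually carry it out (factoring $\eta_{F(bc')}$ through the composite adjunction, cancelling the $b'$-unit against $\epsilon_{Fb'}$ by a triangle identity, and assembling the square's coherence isomorphism $\kappa$ whose mate is the Beck 2-cell), which is precisely the computation the paper has in mind.
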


\begin{proof}
Given composable spans $\left(a,b\right)$ and $\left(c,d\right)$
the composite is given by the diagram \eqref{spancomp}. We then have
an induced diagonal
\[
\delta_{ac',h,db'}\colon\left(ac',db'\right)\to\left(ac',h\right);\left(h,db'\right)
\]
and morphisms $c'\colon\left(ac',h\right)\to\left(a,b\right)$ and
$b'\colon\left(h,db'\right)\to\left(c,d\right)$ for which
\[
\xymatrix{\left(a,b\right);\left(c,d\right)\myar{\delta_{ac',h,db'}}{r} & \left(ac',h\right);\left(h,db'\right)\myar{c';b'}{r} & \left(a,b\right);\left(c,d\right)}
\]
is the identity on $\left(a,b\right);\left(c,d\right)$. Hence the
oplax constraint cell corresponding to the comultiplication maps $\Phi$,
namely
\[
\varphi_{\left(a,b\right),\left(c,d\right)}\colon L\left(\left(a,b\right);\left(c,d\right)\right)\to L\left(a,b\right);L\left(c,d\right)
\]
is given by the pasting 
\begin{equation}
\xymatrix{ & FM\ar[rr]|-{1_{FM}}\ar[rd]|-{F_{\Sigma}h} & \;\ar@{}[d]|-{\Downarrow\eta_{Fh}} & FM\ar[rd]|-{F_{\Sigma}db'}\\
FX\ar[ur]|-{F_{\Delta}ac'}\ar[rd]|-{F_{\Delta}a}\ar@{}[r]|-{\Downarrow} & \ar@{}[r]|-{\Downarrow} & FY\ar[rd]|-{F_{\Delta}c}\ar@{}[r]|-{\Downarrow}\ar[ur]|-{F_{\Delta}h} & \;\ar@{}[r]|-{\Downarrow} & FZ\\
 & FT\ar[ur]|-{F_{\Sigma}b}\ar[uu]|-{F_{\Delta}c'} &  & FS\ar[ur]|-{F_{\Sigma}d}\ar[uu]|-{F_{\Delta}b'}
}
\label{spancompconstraint}
\end{equation}
where $h=bc'=cb'$. It is an easy consequence of functoriality of
mates that this pasting is the usual Beck 2-cell for the pullback
with the appropriate whiskerings.
\end{proof}
Finally, we will need the following lemma, a consequence of Lemma
\ref{mateinv}, in order to complete the proof.
\begin{lem}
\label{spaniconinv} Suppose $L,K\colon\mathbf{Span}\left(\mathcal{E}\right)\to\mathscr{C}$
are given gregarious functors. Then any icon $\alpha\colon L\Rightarrow K$
is necessarily invertible.
\end{lem}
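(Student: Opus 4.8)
The plan is to prove the statement component by component: an icon $\alpha$ is invertible as a 2-cell of $\mathbf{Greg}$ precisely when every component $\alpha_w\colon Lw\to Kw$ is an invertible 2-cell of $\mathscr{C}$, so it suffices to establish invertibility of each $\alpha_w$ for an arbitrary span $w$. The key inputs will be Proposition \ref{spanadjprop}, Lemma \ref{mateinv} together with its following remark, and the fact that $L$ and $K$ are gregarious.

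First I would recall from Proposition \ref{spanadjprop} that, up to isomorphism, the left adjoint 1-cells in $\mathbf{Span}\left(\mathcal{E}\right)$ are exactly the spans $\left(1_X,f\right)$ (the image of $\left(-\right)_{\Sigma}$) and the right adjoints are exactly the spans $\left(f,1_X\right)$ (the image of $\left(-\right)_{\Delta}$). Lemma \ref{mateinv} then gives that $\alpha$ has an invertible component at every left adjoint 1-cell, and the remark following it gives the same conclusion at every right adjoint 1-cell (by passing to the co-duals $L^{\textnormal{co}},K^{\textnormal{co}}$ and taking mates). Hence both $\alpha_{\left(1_T,t\right)}$ and $\alpha_{\left(s,1_T\right)}$ are invertible for all morphisms $s,t$ in $\mathcal{E}$.

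Next I would observe that an arbitrary span $\left(s,t\right)\colon X\nrightarrow Y$ with apex $T$ factors as the composite $\left(s,1_T\right);\left(1_T,t\right)$ of the right adjoint span $\left(s,1_T\right)\colon X\nrightarrow T$ followed by the left adjoint span $\left(1_T,t\right)\colon T\nrightarrow Y$; the pullback defining this composite is trivial since one leg in each span is an identity, and one checks directly that the composite is $\left(s,t\right)$. Writing this composite as $g\cdot f$ with $g=\left(1_T,t\right)$ and $f=\left(s,1_T\right)$, the outer factor $g$ is a left adjoint and therefore has a right adjoint; since $L$ and $K$ are gregarious, this forces the oplax constraint cells $\varphi_{g,f}\colon L\left(g\cdot f\right)\to Lg\cdot Lf$ and $\psi_{g,f}\colon K\left(g\cdot f\right)\to Kg\cdot Kf$ to be invertible.

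Finally I would invoke the binary icon coherence axiom, which in the oplax setting identifies $\alpha_{\left(s,t\right)}=\alpha_{g\cdot f}$ with the composite of $\varphi_{g,f}$, the whiskered 2-cell $\alpha_g\ast\alpha_f$, and $\psi_{g,f}^{-1}$. Each of these is invertible: $\varphi_{g,f}$ and $\psi_{g,f}$ by the previous paragraph, and $\alpha_g\ast\alpha_f$ because both $\alpha_g=\alpha_{\left(1_T,t\right)}$ and $\alpha_f=\alpha_{\left(s,1_T\right)}$ are invertible and horizontal composites of invertible 2-cells are invertible. Thus $\alpha_{\left(s,t\right)}$ is invertible, and the proof is complete. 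The only genuine subtlety is orienting the factorisation so that the 1-cell applied last is a left adjoint, which is exactly what makes the relevant gregarious constraints invertible; everything else is bookkeeping with the icon axiom.
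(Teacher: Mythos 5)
Your proof is correct and follows essentially the same route as the paper's: factor $\left(s,t\right)$ as $\left(s,1_{T}\right);\left(1_{T},t\right)$, invert the components at these spans via Lemma \ref{mateinv} and its dual, invert the constraint cells $\varphi,\psi$ by gregariousness (since $\left(1_{T},t\right)$ has a right adjoint), and conclude from the binary icon axiom. The only cosmetic difference is that you invoke the full classification of Proposition \ref{spanadjprop}, where all that is needed is the elementary fact that $\left(1_{T},t\right)\dashv\left(t,1_{T}\right)$ and $\left(1_{T},s\right)\dashv\left(s,1_{T}\right)$.
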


\begin{proof}
We take identities to be pullback stable for simplicity, so that we
have $\left(s,t\right)=\left(s,1\right);\left(1,t\right)$. Let us
consider the component of such an icon $\alpha$ at a general span
$s,t$. Since $\alpha$ is an icon, the diagram
\begin{equation}
\xymatrix{L\left(s,t\right)\myar{\varphi}{r}\ar[d]_{\alpha_{\left(s,t\right)}} & L\left(s,1\right);L\left(1,t\right)\ar[d]^{\alpha_{\left(s,1\right)};\alpha_{\left(1,t\right)}}\\
K\left(s,t\right)\myard{\psi}{r} & K\left(s,1\right);K\left(1,t\right)
}
\label{spaniconvfigure}
\end{equation}
commutes. By Lemma \ref{mateinv} we know $\alpha_{\left(1,t\right)}$
is invertible, and by its dual we know $\alpha_{\left(s,1\right)}$
is invertible. As $F$ and $G$ are gregarious $\varphi$ and $\psi$
are invertible above. Hence $\alpha_{\left(s,t\right)}$ is invertible.
\end{proof}
We now know enough for a complete proof of the universal properties
of the span construction as given by Dawson, Par{\'e}, Pronk and
Hermida.
\begin{proof}
[Proof of Theorem \ref{unispansthm}] We consider the assignment of
Theorem \ref{unispansthm}, i.e. composition with the embedding $\left(-\right)_{\Sigma}\colon\mathcal{E}\to\mathbf{Span}\left(\mathcal{E}\right)$
written as the assignment
\[
\xymatrix{\mathbf{Span}\left(\mathcal{E}\right)\ar@/_{1pc}/[rr]_{G}\ar@/^{1pc}/[rr]^{F} & \Downarrow\alpha & \mathscr{C} & \mapsto & \mathcal{E}\ar@/_{1pc}/[rr]_{G_{\Sigma}}\ar@/^{1pc}/[rr]^{F_{\Sigma}} & \Downarrow\alpha_{\Sigma} & \mathscr{C}}
\]

We start by proving the first universal property.

\emph{\noun{Well defined.}} This is clear by Corollary \ref{spaniconinv}.

\noun{Fully faithful.} That the assignment $\alpha\mapsto\alpha_{\Sigma}$
is bijective follows from the condition $\left(\alpha_{\Sigma_{f}}^{-1}\right)^{*}=\alpha_{\Delta_{f}}$
forced by Lemma \ref{mateinv}, and the commutativity of \eqref{spaniconvfigure}.
One need only check that any collection
\[
\alpha_{s,t}\colon F\left(s,t\right)\to G\left(s,t\right)
\]
 satisfying these two properties necessarily defines an icon. Indeed,
that such an $\alpha$ is locally natural is a simple consequence
of functoriality of mates and $\alpha_{\Sigma}$ being an icon. To
see that such an $\alpha$ then defines an icon, note that each $\Phi_{s,h,t}$
may be decomposed as the commuting diagram
\[
\xymatrix{F\left(s,t\right)\myar{\Phi_{s,h,t}}{rrr}\ar[d]_{\Phi_{s,1,t}} &  &  & F\left(s,h\right);F\left(h,t\right)\\
F\left(s,1\right);F\left(1,t\right)\myard{F\left(s,1\right);\Phi_{1,h,1};F\left(1,t\right)}{rrr} &  &  & F\left(s,1\right);F\left(1,h\right);F\left(h,1\right);F\left(1,t\right)\ar[u]_{\Phi_{s,1,h}^{-1};\Phi_{h,1,t}^{-1}}
}
\]
and so the commutativity of the diagram\footnote{This diagram is equivalent to the binary coherence condition on such
an icon.}
\[
\xymatrix{F\left(s,t\right)\myar{\Phi_{s,h,t}}{rr}\ar[d]_{\alpha_{s,t}} &  & F\left(s,h\right);F\left(h,t\right)\ar[d]^{\alpha_{s,h};\alpha_{h,t}}\\
G\left(s,t\right)\myard{\Psi_{s,h,t}}{rr} &  & G\left(s,h\right);G\left(h,t\right)
}
\]
amounts to asking that the pastings
\[
\xymatrix{ &  & \ar@{}[d]|-{\Downarrow\eta_{Gh}}\\
\bullet\ar@/_{0.5pc}/[r]_{G_{\Delta}s}\ar@/^{0.5pc}/[r]^{F_{\Delta}s}\ar@{}[r]|-{\Downarrow} & \bullet\ar@/_{0.5pc}/[r]_{G_{\Sigma}h}\ar@/^{0.5pc}/[r]^{F_{\Sigma}h}\ar@{}[r]|-{\Downarrow}\ar@/^{2.5pc}/[rr]^{\textnormal{id}} & \bullet\ar@/_{0.5pc}/[r]_{G_{\Delta}h}\ar@/^{0.5pc}/[r]^{F_{\Delta}h}\ar@{}[r]|-{\Downarrow} & \bullet\ar@/_{0.5pc}/[r]_{G_{\Sigma}t}\ar@/^{0.5pc}/[r]^{F_{\Sigma}t}\ar@{}[r]|-{\Downarrow} & \bullet
}
\]
and
\[
\xymatrix{ &  & \ar@{}[d]|-{\Downarrow\eta_{Gh}}\\
\bullet\ar@/_{0.5pc}/[r]_{G_{\Delta}s}\ar@/^{0.5pc}/[r]^{F_{\Delta}s}\ar@{}[r]|-{\Downarrow} & \bullet\ar@/_{0pc}/[r]_{G_{\Sigma}h}\ar@/^{2.5pc}/[rr]^{\textnormal{id}} & \bullet\ar@/_{0pc}/[r]_{G_{\Delta}h} & \bullet\ar@/_{0.5pc}/[r]_{G_{\Sigma}t}\ar@/^{0.5pc}/[r]^{F_{\Sigma}t}\ar@{}[r]|-{\Downarrow} & \bullet
}
\]
agree; which is easily seen by expanding $\alpha_{\Delta_{h}}$ in
terms of $\alpha_{\Sigma_{h}}^{-1}$ and using the triangle identities.
The nullary icon condition is trivial. This shows that $\alpha$ indeed
admits the structure of an icon.

\noun{Essentially surjective}. Given any sinister pseudofunctor $F\colon\mathcal{E}\to\mathscr{C}$
we take the gregarious functor $L\colon\mathbf{Span}\left(\mathcal{E}\right)\to\mathscr{C}$
from Proposition \ref{spanLoplax} and note that $L_{\Sigma}=F$. 

We now verify the second universal property.

\noun{Restrictions}. The second property is a restriction of the first.
Indeed, given a pseudofunctor $L\colon\mathbf{Span}\left(\mathcal{E}\right)\to\mathscr{C}$
the corresponding pseudofunctor $L_{\Sigma}\colon\mathcal{E}\to\mathscr{C}$
satisfies the Beck condition, since the embedding $\left(-\right)_{\Sigma}:\mathcal{E}\to\mathbf{Span}\left(\mathcal{E}\right)$
satisfies the Beck condition. Moreover, given a sinister pseudofunctor
$F\colon\mathcal{E}\to\mathscr{C}$ which satisfies the Beck condition,
the corresponding map $\mathbf{Span}\left(\mathcal{E}\right)\to\mathscr{C}$
is pseudo since the oplax constraint cells of this functor are Beck
2-cells by Lemma \ref{oplaxBeck}.
\end{proof}

\section{Universal properties of spans with invertible 2-cells\label{unispaniso}}

In this section we derive the universal property of the bicategory
of spans with invertible 2-cells, denoted $\mathbf{Span}_{\textnormal{iso}}\left(\mathcal{E}\right)$.
Indeed, an understanding of this universal property will be required
for stating the universal property of polynomials with cartesian 2-cells
$\mathbf{Poly}_{c}\left(\mathcal{E}\right)$ described in the next
section.

\subsection{Stating the universal property}

The embeddings $\left(-\right)_{\Sigma}$ and $\left(-\right)_{\Delta}$
into $\mathbf{Span}_{\textnormal{iso}}\left(\mathcal{E}\right)$ are
defined the same as in the case of spans with the usual 2-cells. The
difference here is that we no longer have adjunctions $f_{\Sigma}\dashv f_{\Delta}$
in general, a fact which we will emphasize by replacing the symbol
$\Sigma$ with $\otimes$. Consequently the universal property is
more complicated to state, and so we will need some definitions.
\begin{defn}
\label{Beckpair} Given a category $\mathcal{E}$ with chosen pullbacks,
we may define the category of \emph{lax Beck pairs} on $\mathcal{E}$,
denoted $\mathbf{LaxBeckPair}\left(\mathcal{E},\mathscr{C}\right)$.
This category has objects given by pairs of pseudofunctors 
\[
F_{\otimes}\colon\mathcal{E}\to\mathscr{C},\qquad\qquad F_{\Delta}\colon\mathcal{E}^{\textnormal{op}}\to\mathscr{C}
\]
which agree on objects, equipped with, for each pullback square 
\[
\xymatrix{\bullet\ar[d]_{g'}\myar{f'}{r} & \bullet\ar[d]^{g} &  &  & \bullet\myar{F_{\otimes}f'}{r}\ar@{}[dr]|-{\Downarrow\mathfrak{b}_{f,g}^{f',g'}} & \bullet\\
\bullet\ar[r]_{f} & \bullet &  &  & \bullet\ar[r]_{F_{\otimes}f}\ar[u]^{F_{\Delta}g'} & \bullet\ar[u]_{F_{\Delta}g}
}
\]
in $\mathcal{E}$ as on the left, a 2-cell as on the right (which
we call a Beck 2-cell). The collection of these Beck 2-cells comprise
the ``Beck data'' denoted $^{F}\mathfrak{b}$ (or just $\mathfrak{b}$),
and are required to satisfy the following coherence conditions:
\begin{enumerate}
\item (horizontal double pullback condition) for any double pullback
\begin{equation}
\xymatrix@=1em{\bullet\ar[rr]^{f_{2}'}\ar[dd]_{g''} &  & \bullet\ar[dd]|-{g'}\myar{f_{1}'}{rr} &  & \bullet\ar[dd]^{g}\\
\\
\bullet\ar[rr]_{f_{2}} &  & \bullet\ar[rr]_{f_{1}} &  & \bullet
}
\label{pullbackcohhor}
\end{equation}
we have
\[
\xymatrix{ &  & \ar@{}[d]|-{\overset{\;}{\cong}}\\
\bullet\ar[rr]^{F_{\otimes}f_{2}'}\ar@/^{2pc}/[rrrr]^{F_{\otimes}\left(f_{1}'f_{2}'\right)} &  & \bullet\myar{F_{\otimes}f_{1}'}{rr}\ar@{}[dll]|-{\Downarrow\mathfrak{b}_{f_{2},g'}^{f'',g''}}\ar@{}[drr]|-{\Downarrow\mathfrak{b}_{f_{1},g}^{f_{1}',g'}} &  & \bullet & \ar@{}[d]|-{=} & \bullet\ar[rr]^{F_{\otimes}\left(f_{1}'f_{2}'\right)}\ar@{}[drr]|-{\Downarrow\mathfrak{b}_{f_{1}f_{2},g}^{f_{1}'f_{2}',g''}} &  & \bullet\\
\bullet\ar[rr]_{F_{\otimes}f_{2}}\ar[u]^{F_{\Delta}g''}\ar@/_{2pc}/[rrrr]_{F_{\otimes}\left(f_{1}'f_{2}'\right)} &  & \bullet\ar[rr]_{F_{\otimes}f_{1}}\ar[u]|-{F_{\Delta}g'} &  & \bullet\ar[u]_{F_{\Delta}g} & \; & \bullet\ar[u]^{F_{\Delta}g''}\ar[rr]_{F_{\otimes}\left(f_{1}f_{2}\right)} &  & \bullet\ar[u]_{F_{\Delta}g}\\
 &  & \ar@{}[u]|-{\underset{\;}{\cong}}
}
\]
\item (vertical double pullback condition) for any double pullback
\begin{equation}
\xymatrix@=1em{\bullet\ar[rr]^{f''}\ar[dd]_{g_{2}'} &  & \bullet\ar[dd]^{g_{2}}\\
\\
\bullet\ar[rr]|-{f'}\ar[dd]_{g_{1}'} &  & \bullet\ar[dd]^{g_{1}}\\
\\
\bullet\ar[rr]_{f} &  & \bullet
}
\label{pullbackcohvert}
\end{equation}
we have 
\[
\xymatrix{ & \bullet\ar[rr]^{F_{\otimes}f''}\ar@{}[drr]|-{\Downarrow\mathfrak{b}_{f,'g_{2}}^{f'',g_{2}'}} &  & \bullet &  &  &  & \bullet\ar[rr]^{F_{\otimes}f''}\ar@{}[ddrr]|-{\Downarrow\mathfrak{b}_{f,g_{1}g_{2}}^{f'',g_{1}'g_{2}'}} &  & \bullet\\
\ar@{}[r]|-{\;\;\;\;\cong} & \bullet\ar[u]|-{F_{\Delta}g_{2}'}\ar[rr]|-{F_{\otimes}f'}\ar@{}[drr]|-{\Downarrow\mathfrak{b}_{f,g_{1}}^{f',g_{1}'}} &  & \bullet\ar[u]|-{F_{\Delta}g_{2}} & \ar@{}[l]|-{\cong\;\;\;\;} & \ar@{}[r]|-{=\;} & \;\\
 & \bullet\ar[rr]_{F_{\otimes}f}\ar[u]|-{F_{\Delta}g_{1}'}\ar@/^{2pc}/[uu]^{F_{\Delta}\left(g_{1}'g_{2}'\right)} &  & \bullet\ar[u]|-{F_{\Delta}g_{1}}\ar@/_{2pc}/[uu]_{F_{\Delta}\left(g_{1}g_{2}\right)} &  &  &  & \bullet\ar[rr]_{F_{\otimes}f}\ar[uu]^{F_{\Delta}\left(g_{1}'g_{2}'\right)} &  & \bullet\ar[uu]_{F_{\Delta}\left(g_{1}g_{2}\right)}
}
\]
\item (horizontal nullary pullback condition) for any nullary pullback as
on the left below, the right pasting below is the identity
\[
\xymatrix{\bullet\ar[d]_{\textnormal{id}}\myar{f}{r} & \bullet\ar[d]^{\textnormal{id}} &  &  & \ar@{}[dr]|-{\cong\;} & \bullet\ar[rr]^{F_{\otimes}\left(f\right)}\ar@{}[drr]|-{\Downarrow\mathfrak{b}_{f,\textnormal{id}}^{f,\textnormal{id}}} &  & \bullet & \ar@{}[dl]|-{\;\cong}\\
\bullet\ar[r]_{f} & \bullet &  &  &  & \bullet\ar[rr]_{F_{\otimes}\left(f\right)}\ar[u]|-{F_{\Delta}\left(\textnormal{id}\right)}\ar@/^{2.5pc}/[u]^{\textnormal{id}} &  & \bullet\ar[u]|-{F_{\Delta}\left(\textnormal{id}\right)}\ar@/_{2.5pc}/[u]_{\textnormal{id}}
}
\]
\item (vertical nullary pullback condition) for any nullary pullback as
on the left below, the right pasting below is the identity
\[
\xymatrix{ &  &  &  &  &  & \ar@{}[d]|-{\overset{\;}{\cong}}\\
\bullet\ar[d]_{g}\myar{\textnormal{id}}{r} & \bullet\ar[d]^{g} &  &  &  & \bullet\ar[rr]|-{F_{\otimes}\left(\textnormal{id}\right)}\ar@{}[drr]|-{\Downarrow\mathfrak{b}_{\textnormal{id},g}^{\textnormal{id},g}}\ar@/^{2pc}/[rr]^{\textnormal{id}} &  & \bullet\\
\bullet\ar[r]_{\textnormal{id}} & \bullet &  &  &  & \bullet\ar[u]^{F_{\Delta}g}\ar[rr]|-{F_{\otimes}\left(\textnormal{id}\right)}\ar@/_{2pc}/[rr]_{\textnormal{id}} &  & \bullet\ar[u]_{F_{\Delta}g}\\
 &  &  &  &  &  & \ar@{}[u]|-{\underset{\;}{\cong}}
}
\]
\end{enumerate}
We refer to these conditions as the \emph{Beck\textendash Chevalley
coherence conditions}. A morphism in this category $\left(F_{\otimes},F_{\Delta},^{F}\mathfrak{b}\right)\to\left(G_{\otimes},G_{\Delta},^{G}\mathfrak{b}\right)$
is a pair of icons $\alpha\colon F_{\otimes}\Rightarrow G_{\otimes}$
and $\beta\colon F_{\Delta}\Rightarrow G_{\Delta}$ such that for
each pullback square as on the left below
\begin{equation}
\xymatrix{\bullet\ar[d]_{g'}\myar{f'}{r} & \bullet\ar[d]^{g} &  & F_{\otimes}f'\cdot F_{\Delta}g'\ar[d]_{^{F}\mathfrak{b}_{f,g}^{f',g'}}\ar[rr]^{\alpha_{f'}\ast\beta_{g'}} &  & G_{\otimes}f'\cdot G_{\Delta}g'\ar[d]^{^{G}\mathfrak{b}_{f,g}^{f',g'}}\\
\bullet\ar[r]_{f} & \bullet &  & F_{\Delta}g\cdot F_{\otimes}f\ar[rr]_{\beta_{g}\ast\alpha_{f}} &  & G_{\Delta}g\cdot G_{\otimes}f
}
\label{conditionBeck}
\end{equation}
the right diagram commutes. The category $\mathbf{BeckPair}\left(\mathcal{E},\mathscr{C}\right)$
is the subcategory of $\mathbf{LaxBeckPair}\left(\mathcal{E},\mathscr{C}\right)$
containing objects $\left(F_{\otimes},F_{\Delta},^{F}\mathfrak{b}\right)$
such that every Beck 2-cell in $^{F}\mathfrak{b}$ is invertible.
\end{defn}

Before we can state the universal property, we will need to describe
how lax Beck pairs arise from suitable functors out of $\mathbf{Span}_{\textnormal{iso}}\left(\mathcal{E}\right)$.
\begin{defn}
Let $\mathcal{E}$ be a category with pullbacks (chosen such that
identities pullback to identities) and let $\mathscr{C}$ be a bicategory.
Then the category 
\[
\mathbf{Greg}_{\otimes,\Delta}\left(\mathbf{Span}_{\textnormal{iso}}\left(\mathcal{E}\right),\mathscr{C}\right)
\]
 has objects given by those gregarious functors of bicategories $\mathbf{Span}_{\textnormal{iso}}\left(\mathcal{E}\right)\to\mathscr{C}$
which restrict to pseudofunctors when composed with the canonical
embeddings $\left(-\right)_{\otimes}\colon\mathcal{E}\to\mathbf{Span}_{\textnormal{iso}}\left(\mathcal{E}\right)$
and $\left(-\right)_{\Delta}\colon\mathcal{E}\to\mathbf{Span}_{\textnormal{iso}}\left(\mathcal{E}\right)$.
Moreover, we require that each oplax constraint
\begin{equation}
F\left(\vcenter{\hbox{\xymatrix@=1em{ & \bullet\ar[rd]^{t}\ar[ld]_{s}\\
\bullet &  & \bullet
}
}}\right)\to F\left(\vcenter{\hbox{\xymatrix@=1em{ & \bullet\ar[rd]^{\textnormal{id}}\ar[ld]_{s}\\
\bullet &  & \bullet
}
}}\right);F\left(\vcenter{\hbox{\xymatrix@=1em{ & \bullet\ar[rd]^{t}\ar[ld]_{\textnormal{id}}\\
\bullet &  & \bullet
}
}}\right)\label{invconstraint}
\end{equation}
be invertible. The morphisms of this category are icons.
\end{defn}

\begin{prop}
\label{spanisoemb} Let $\mathcal{E}$ be a category with pullbacks
(chosen such that identities pullback to identities) and let $\mathscr{C}$
be a bicategory. We then have a functor
\[
\left(-\right)_{\otimes\Delta}\colon\mathbf{Greg}_{\otimes,\Delta}\left(\mathbf{Span}_{\textnormal{iso}}\left(\mathcal{E}\right),\mathscr{C}\right)\to\mathbf{LaxBeckPair}\left(\mathcal{E},\mathscr{C}\right)
\]
defined by the assignment taking such a gregarious functor $F\colon\mathbf{Span}_{\textnormal{iso}}\left(\mathcal{E}\right)\to\mathscr{C}$
to the pair of pseudofunctors
\[
F_{\otimes}\colon\mathcal{E}\to\mathscr{C},\qquad F_{\Delta}\colon\mathcal{E}^{\textnormal{op}}\to\mathscr{C}
\]
equipped with Beck data $^{F}\mathfrak{b}$ given by, for each pullback
square as on the left below (with the chosen pullback on the right
below)
\[
\xymatrix{\bullet\ar[d]_{g'}\myar{f'}{r} & \bullet\ar[d]^{g} &  &  & \bullet\ar[d]_{\widetilde{g}}\myar{\widetilde{f}}{r} & \bullet\ar[d]^{g}\\
\bullet\ar[r]_{f} & \bullet &  &  & \bullet\ar[r]_{f} & \bullet
}
\]
the composite of: 
\begin{enumerate}
\item the inverse of an oplax constraint cell 
\[
F\left(\vcenter{\hbox{\xymatrix@=1em{ & \bullet\ar[rd]^{\textnormal{id}}\ar[ld]_{g'}\\
\bullet &  & \bullet
}
}}\right);F\left(\vcenter{\hbox{\xymatrix@=1em{ & \bullet\ar[rd]^{f'}\ar[ld]_{\textnormal{id}}\\
\bullet &  & \bullet
}
}}\right)\to F\left(\vcenter{\hbox{\xymatrix@=1em{ & \bullet\ar[rd]^{f'}\ar[ld]_{g'}\\
\bullet &  & \bullet
}
}}\right)
\]
\item the application of $F$ to the induced isomorphism of pullbacks
\[
F\left(\vcenter{\hbox{\xymatrix@=1em{ & \bullet\ar[rd]^{f'}\ar[ld]_{g'}\\
\bullet &  & \bullet
}
}}\right)\to F\left(\vcenter{\hbox{\xymatrix@=1em{ & \bullet\ar[rd]^{\widetilde{f}}\ar[ld]_{\widetilde{g}}\\
\bullet &  & \bullet
}
}}\right)
\]
\item the oplax constraint cell 
\[
F\left(\vcenter{\hbox{\xymatrix@=1em{ & \bullet\ar[rd]^{\widetilde{f}}\ar[ld]_{\widetilde{g}}\\
\bullet &  & \bullet
}
}}\right)\to F\left(\vcenter{\hbox{\xymatrix@=1em{ & \bullet\ar[rd]^{f}\ar[ld]_{\textnormal{id}}\\
\bullet &  & \bullet
}
}}\right);F\left(\vcenter{\hbox{\xymatrix@=1em{ & \bullet\ar[rd]^{\textnormal{id}}\ar[ld]_{g}\\
\bullet &  & \bullet
}
}}\right)
\]
\end{enumerate}
\end{prop}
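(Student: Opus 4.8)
The plan is to unwind the Beck 2-cell $\mathfrak{b}_{f,g}^{f',g'}$ into the three pieces of its construction and then check, in turn, the four Beck\textendash Chevalley coherence conditions of Definition \ref{Beckpair} and finally functoriality. Fix the pullback square with corners $P,X,Y,Z$ and legs $f'\colon P\to X$, $g'\colon P\to Y$, $f\colon Y\to Z$, $g\colon X\to Z$. The first thing I would record is that a span through a pullback admits two canonical factorisations: the span $\left(g',f'\right)\colon Y\nrightarrow X$ equals $\left(g',\textnormal{id}\right);\left(\textnormal{id},f'\right)$, which is precisely the factorisation whose oplax constraint \eqref{invconstraint} is assumed invertible, so step (1) really is the inverse of an \emph{invertible} constraint; and the chosen-pullback span $\left(\widetilde g,\widetilde f\right)$ equals $\left(\textnormal{id},f\right);\left(g,\textnormal{id}\right)$, so the oplax constraint of step (3) genuinely lands in $F_\Delta g\cdot F_\otimes f$. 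Step (2) is $F$ applied to the canonical comparison isomorphism $\left(g',f'\right)\cong\left(\widetilde g,\widetilde f\right)$ of pullbacks, an invertible 2-cell of $\mathbf{Span}_{\textnormal{iso}}\left(\mathcal{E}\right)$, hence invertible. Thus $\mathfrak{b}_{f,g}^{f',g'}$ is an inverse oplax constraint, followed by an invertible comparison, followed by an oplax constraint.

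That the data is well defined (independent of the chosen ambient pullback) is immediate from naturality of the oplax constraints and from $F$ carrying the canonical isomorphism between any two pullbacks of $\left(f,g\right)$ to an invertible 2-cell; the normalisation in step (2) only fixes a representative. For the \textbf{horizontal double pullback condition} I would exhibit both the outer Beck 2-cell and the horizontal pasting of the two inner Beck 2-cells as two bracketings of the oplax constraints attached to a single iterated span composite obtained by splitting $f_1f_2$ and factoring through the two pullbacks; their equality then follows from the associativity coherence diagram \eqref{assocdiag} for $F$, the two comparison isomorphisms marked $\cong$ in the condition being supplied by the pseudofunctoriality constraint of $F_\otimes$ relating $F_\otimes\left(f_1'f_2'\right)$ with $F_\otimes f_1'\cdot F_\otimes f_2'$ (and dually on the bottom row). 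The \textbf{vertical double pullback condition} is the mirror argument: the reassociation now happens on the $\Delta$-side, with the comparison isomorphisms coming from the pseudofunctoriality constraint of $F_\Delta$.

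The two \textbf{nullary pullback conditions} are the degenerate cases in which one leg of the square is an identity, so the relevant factorisation is a unit composite; here the claim reduces to the left/right unit coherence axioms of the oplax functor together with normality of the nullary constraints $\lambda$, both available since $F$ is gregarious and restricts to pseudofunctors along $\left(-\right)_\otimes$ and $\left(-\right)_\Delta$. For functoriality, an icon $\alpha\colon F\Rightarrow G$ restricts along the two embeddings to icons $\alpha_\otimes\colon F_\otimes\Rightarrow G_\otimes$ and $\alpha_\Delta\colon F_\Delta\Rightarrow G_\Delta$, and the compatibility condition \eqref{conditionBeck} is obtained by stacking the binary icon-coherence squares of $\alpha$ at the two oplax-constraint pieces of $\mathfrak{b}$ and invoking local naturality of $\alpha$ at the pullback-apex isomorphism of step (2). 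Preservation of identities and composition is then immediate from the componentwise description of icons, so $\left(-\right)_{\otimes\Delta}$ is a functor.

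The main obstacle will be the bookkeeping in the two double pullback conditions: one must present the Beck 2-cell of the composite rectangle and the pasting of its two constituents as two groupings of the same family of oplax constraints on an iterated span, and then run these simultaneously through the associativity pentagon \eqref{assocdiag}, the pseudofunctoriality constraints of $F_\otimes$ (resp.\ $F_\Delta$), and the naturality of the constraints at the various induced pullback-apex isomorphisms. This is precisely the sort of lengthy but routine coherence computation that the generic-bicategory reformulation of Subsection \ref{generic} is designed to compress, and I would organise the diagram chases through that reduced $\Phi$/$\Lambda$ description wherever possible.
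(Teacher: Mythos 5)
Your unwinding of the Beck 2-cell and your treatment of the nullary conditions and of functoriality match the paper's proof in substance, but for the two double pullback conditions you take a genuinely different route. The paper does not run the associativity pentagon by hand: it observes that, being normal oplax, $F$ induces a simplicial map $N\left(F\right)\colon N\left(\mathbf{Span}_{\textnormal{iso}}\left(\mathcal{E}\right)\right)\to N\left(\mathscr{C}\right)$ of geometric nerves \cite{streetnerve}, and that such a map preserves equalities of pastings of 2-simplices. The horizontal condition then drops out by applying this preservation property to a single square built from canonical isomorphisms and equalities of the spans $\left(1,f_{2}\right)$, $\left(1,f_{1}\right)$, $\left(g,1\right)$ and their composites, followed by composing with the pseudofunctoriality constraints of $F_{\otimes}$ and constraints of the form \eqref{invconstraint}; the vertical condition is dual. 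Your direct approach---presenting the outer Beck 2-cell and the pasting of the two inner ones as two bracketings of the constraints on an iterated span composite and running them through \eqref{assocdiag}, naturality of the constraints, and the unit axioms---rests on exactly the same axioms and can be pushed through, but it is precisely the multi-step bookkeeping you flag as the main obstacle, which the nerve argument compresses into one application of simplicial functoriality plus an easy check of an equality of canonical span isomorphisms. What your route buys is self-containedness (no appeal to the nerve); what the paper's buys is that essentially no 2-dimensional diagram chase in $\mathscr{C}$ remains.

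One concrete correction: your closing suggestion to organise these chases through the reduced $\Phi$/$\Lambda$ description of Subsection \ref{generic} is not available here. That description (Proposition \ref{spancoh}) applies to oplax functors out of $\mathbf{Span}\left(\mathcal{E}\right)$, whose generic factorisations use the diagonal 2-cells $\left(s,t\right)\to\left(s,h\right);\left(h,t\right)$; these are not invertible, hence absent from $\mathbf{Span}_{\textnormal{iso}}\left(\mathcal{E}\right)$, which is therefore not a generic bicategory. This is exactly why the paper states at the start of Section \ref{unispaniso} that Theorem \ref{unispanisothm} must be proved directly. Since this remark is not load-bearing in your plan---the pentagon-plus-naturality chase goes through without it---it does not create a gap, but the chase must be carried out with the raw oplax axioms (or the nerve), not the $\Phi$/$\Lambda$ calculus.
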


\begin{proof}
We must check the Beck 2-cells defined as above satisfy the required
coherence conditions. The nullary conditions on the Beck 2-cells are
trivially equivalent to the nullary conditions on the constraints
of $F$. To see the ``horizontal double pullback condition'' holds,
we note that since $F\colon\mathbf{Span}_{\textnormal{iso}}\left(\mathcal{E}\right)\to\mathscr{C}$
is normal oplax, we have a resulting natural transformation
\[
N\left(F\right)\colon N\left(\mathbf{Span}_{\textnormal{iso}}\left(\mathcal{E}\right)\right)\to N\left(\mathscr{C}\right)
\]
where the functor $N\colon\mathbf{Bicat}\to\left[\Delta^{\textnormal{op}},\mathbf{Set}\right]$
is given by the geometric nerve \cite{streetnerve}. In particular
(as in \cite{nerve2cat}), on 2-simplices the assignment
\[
\xymatrix@=1em{ &  & \bullet\ar[rrdd]^{b} &  &  &  &  &  & \bullet\ar[rrdd]^{Fb}\\
 &  &  &  &  & \mapsto\\
\bullet\ar[rrrr]_{c}\ar[rruu]^{a} &  & \;\utwocell[0.4]{uu}{\alpha} &  & \bullet &  & \bullet\ar[rrrr]_{Fc}\ar[rruu]^{Fa} &  & \;\utwocell[0.4]{uu}{\overline{F\mathfrak{\alpha}}} &  & \bullet
}
\]
(where $\overline{F\mathfrak{\alpha}}$ is $F\alpha$ composed with
the appropriate oplax constraint cell) satisfies the condition that
\[
\xymatrix@=1em{\bullet\ar[rrrr] &  &  &  & \bullet\ar[dd] &  & \bullet\ar[rrrr]\ar[rrrrdd] &  &  & \; & \bullet\ar[dd]\\
 &  & \utwocell[0.4]{dr}{\mathfrak{\alpha}}\ultwocell{ull}{\mathfrak{\beta}} &  &  & = &  &  & \utwocell[0.4]{dll}{\mathfrak{\gamma}}\urtwocell[0.4]{urr}{\mathfrak{\delta}}\\
\bullet\ar[rrrr]\ar[uu]\ar[uurrrr] &  & \; & \; & \bullet &  & \bullet\ar[rrrr]\ar[uu] &  & \; &  & \bullet
}
\]
implies that 
\[
\xymatrix@=1em{\bullet\ar[rrrr] &  &  &  & \bullet\ar[dd] &  & \bullet\ar[rrrr]\ar[rrrrdd] &  &  & \; & \bullet\ar[dd]\\
 &  & \utwocell[0.4]{dr}{\overline{F\mathfrak{\alpha}}}\ultwocell{ull}{\overline{F\mathfrak{\beta}}} &  &  & = &  &  & \utwocell[0.4]{dll}{\overline{F\mathfrak{\gamma}}}\urtwocell[0.4]{urr}{\overline{F\mathfrak{\delta}}}\\
\bullet\ar[rrrr]\ar[uu]\ar[uurrrr] &  & \; &  & \bullet &  & \bullet\ar[rrrr]\ar[uu] &  & \; &  & \bullet
}
\]

Now consider the three spans
\[
\xymatrix@=1em{ & \bullet\ar[rd]^{f_{2}}\ar[ld]_{\textnormal{id}} &  &  &  & \bullet\ar[rd]^{f_{1}}\ar[ld]_{\textnormal{id}} &  &  &  & \bullet\ar[rd]^{\textnormal{id}}\ar[ld]_{g}\\
\bullet &  & \bullet &  & \bullet &  & \bullet &  & \bullet &  & \bullet
}
\]
which we denote by shorthand as $\left(1,f_{2}\right)$, $\left(1,f_{1}\right)$
and $\left(g,1\right)$ respectively (where $f_{1}$, $f_{2}$ and
$g$ are as in \eqref{pullbackcohhor}). Applying the above implication
to the equality below, where each of the four regions contains a canonical
isomorphism or equality of spans
\[
\xymatrix@=1em{\bullet\ar[rrrr]^{\left(1,f_{1}\right)} &  &  &  & \bullet\ar[dd]^{\left(g,1\right)} &  &  &  & \bullet\ar[rrrr]^{\left(1,f_{1}\right)}\ar[rrrrdd]|-{\left(g',f_{1}'\right)} &  &  & \; & \bullet\ar[dd]^{\left(g,1\right)}\\
 &  &  &  &  &  & =\\
\bullet\ar[rrrr]_{\left(g'',f_{1}'f_{2}'\right)}\ar[uu]^{\left(1,f_{2}\right)}\ar[uurrrr]|-{\left(1,f_{1}f_{2}\right)} &  & \; &  & \bullet &  &  &  & \bullet\ar[rrrr]_{\left(g'',f_{1}'f_{2}'\right)}\ar[uu]^{\left(1,f_{2}\right)} &  & \; &  & \bullet
}
\]
then gives the horizontal double pullback condition (after composing
with the appropriate pseudofunctoriality constraints of $F_{\Sigma}$
and constraints of the form \eqref{invconstraint}). The proof of
the vertical condition is similar. Finally, it is clear the canonical
assignation on morphisms is well defined, and the assignment given
by composing with the canonical embeddings is trivially functorial. 
\end{proof}
We can now state the universal property of $\mathbf{Span}_{\textnormal{iso}}\left(\mathcal{E}\right)$.
\begin{thm}
\label{unispanisothm} Given a category $\mathcal{E}$ with chosen
pullbacks (chosen such that identities pullback to identities), the
functor $\left(-\right)_{\otimes\Delta}$ of Proposition \ref{spanisoemb}
defines the equivalence of categories
\[
\mathbf{Greg}_{\otimes,\Delta}\left(\mathbf{Span}_{\textnormal{iso}}\left(\mathcal{E}\right),\mathscr{C}\right)\simeq\mathbf{LaxBeckPair}\left(\mathcal{E},\mathscr{C}\right)
\]
which restricts to the equivalence
\[
\mathbf{Icon}\left(\mathbf{Span}_{\textnormal{iso}}\left(\mathcal{E}\right),\mathscr{C}\right)\simeq\mathbf{BeckPair}\left(\mathcal{E},\mathscr{C}\right)
\]
for any bicategory $\mathscr{C}$.
\end{thm}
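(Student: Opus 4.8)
The plan is to follow the template of the proof of Theorem \ref{unispansthm}, but to account for the fact that $\mathbf{Span}_{\textnormal{iso}}\left(\mathcal{E}\right)$ carries essentially no nontrivial adjunctions (Remark following Proposition \ref{spanadjprop}): the role played there by the adjunction units and functoriality of mates is now taken over by the explicitly supplied Beck data $\mathfrak{b}$. Since the forward functor $\left(-\right)_{\otimes\Delta}$ is already constructed and shown well-defined in Proposition \ref{spanisoemb}, it remains only to establish essential surjectivity and full faithfulness, and then to identify the restricted equivalence.

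For essential surjectivity I would reconstruct, from a lax Beck pair $\left(F_{\otimes},F_{\Delta},{}^{F}\mathfrak{b}\right)$, a normal oplax functor $F\colon\mathbf{Span}_{\textnormal{iso}}\left(\mathcal{E}\right)\to\mathscr{C}$ lying in $\mathbf{Greg}_{\otimes,\Delta}$. On objects set $F\left(s,t\right):=F_{\otimes}t\cdot F_{\Delta}s$, and on invertible $2$-cells use the pseudofunctoriality constraints of $F_{\otimes}$ and $F_{\Delta}$. Every span factors up to canonical isomorphism as $\left(s,t\right)\cong\left(s,\textnormal{id}\right);\left(\textnormal{id},t\right)$ (using that identities pull back to identities), so the binary constraints are forced: the constraint \eqref{invconstraint} is the invertible cell built from the unit constraints of $F_{\otimes}$ and $F_{\Delta}$, while the general binary constraint on a composite of spans is obtained by decomposing both factors through their apices and inserting the Beck $2$-cell $\mathfrak{b}$ at the pullback, exactly as the constraint was identified in Lemma \ref{oplaxBeck}. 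To verify the normal oplax associativity and unit axioms I would argue via the geometric nerve as in Proposition \ref{spanisoemb}, translating the axioms into simplicial identities; the content is that the horizontal and vertical double pullback conditions on $\mathfrak{b}$ supply precisely the associativity coherence, while the two nullary pullback conditions supply the unit coherence. One then checks that $F$ is gregarious (adjoint spans have invertible legs, whence the corresponding Beck cells are invertible) and that $\left(-\right)_{\otimes\Delta}$ sends $F$ back to the original triple.

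For full faithfulness I would mirror the span argument. An icon $\alpha\colon F\Rightarrow G$ in $\mathbf{Greg}_{\otimes,\Delta}$ restricts to a pair of icons $\alpha_{\otimes}\colon F_{\otimes}\Rightarrow G_{\otimes}$ and $\alpha_{\Delta}\colon F_{\Delta}\Rightarrow G_{\Delta}$, and the binary icon coherence applied at the Beck constraint forces exactly the compatibility \eqref{conditionBeck} with the Beck data. Conversely, given a morphism $\left(\alpha_{\otimes},\alpha_{\Delta}\right)$ of lax Beck pairs, the decomposition $\left(s,t\right)\cong\left(s,\textnormal{id}\right);\left(\textnormal{id},t\right)$ together with invertibility of \eqref{invconstraint} determines a unique candidate component $\alpha_{\left(s,t\right)}$, and the binary and nullary icon axioms for it reduce, through the same nerve bookkeeping, to the pseudofunctor icon axioms for $\alpha_{\otimes}$ and $\alpha_{\Delta}$ together with \eqref{conditionBeck}. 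Hence $\alpha\mapsto\left(\alpha_{\otimes},\alpha_{\Delta}\right)$ is a bijection onto the morphisms of $\mathbf{LaxBeckPair}\left(\mathcal{E},\mathscr{C}\right)$.

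Finally, a functor in $\mathbf{Greg}_{\otimes,\Delta}$ is a pseudofunctor precisely when all of its oplax constraints are invertible; as these constraints are whiskered Beck $2$-cells by the construction above, this happens exactly when every Beck cell in $\mathfrak{b}$ is invertible, i.e. for the objects of $\mathbf{BeckPair}\left(\mathcal{E},\mathscr{C}\right)$. Thus the equivalence restricts to $\mathbf{Icon}\left(\mathbf{Span}_{\textnormal{iso}}\left(\mathcal{E}\right),\mathscr{C}\right)\simeq\mathbf{BeckPair}\left(\mathcal{E},\mathscr{C}\right)$, as required. I expect the main obstacle to be the associativity verification in the reconstruction: lacking adjunctions, the clean functoriality-of-mates calculation of Proposition \ref{spanLoplax} is unavailable, so the double pullback coherence conditions on $\mathfrak{b}$ must be matched against the iterated pullbacks occurring in span associativity directly, which is the most delicate part of the argument.
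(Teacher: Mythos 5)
Your overall skeleton---direct reconstruction of a normal oplax functor whose binary constraints are whiskered Beck 2-cells, full faithfulness via the decomposition $(s,t)\cong(s,\textnormal{id});(\textnormal{id},t)$ and condition \eqref{conditionBeck}, and the restricted equivalence via invertibility of $\mathfrak{b}$---matches the paper's. But there is a genuine gap at the very start of your reconstruction: you define the local functors on invertible 2-cells ``using the pseudofunctoriality constraints of $F_{\otimes}$ and $F_{\Delta}$'', and that step fails. Given a 2-cell of $\mathbf{Span}_{\textnormal{iso}}\left(\mathcal{E}\right)$, i.e.\ an invertible $f$ with $s=uf$ and $t=vf$, pseudofunctoriality only gets you to
\[
F_{\otimes}t\cdot F_{\Delta}s\;\cong\;F_{\otimes}v\cdot\left(F_{\otimes}f\cdot F_{\Delta}f\right)\cdot F_{\Delta}u,
\]
and to land in $F_{\otimes}v\cdot F_{\Delta}u$ you must cancel $F_{\otimes}f\cdot F_{\Delta}f$ against an identity. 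In a lax Beck pair the pseudofunctors $F_{\otimes}$ and $F_{\Delta}$ carry no relation to one another except the supplied Beck data (this is exactly what replaces the adjunctions $F_{\Sigma}f\dashv F_{\Delta}f$ available in the $\mathbf{Span}\left(\mathcal{E}\right)$ case), so no such cancellation can be manufactured from constraints alone. The paper's proof instead uses the Beck component $\mathfrak{b}_{1,1}^{f,f}$ of the degenerate square having $f$ on two sides and identities on the other two---a square which is a pullback precisely because $f$ is invertible---and functoriality of the resulting assignment is then a Beck-coherence computation. The same degenerate components are needed again in your fully faithful argument: checking that the candidate components $\alpha_{\left(s,t\right)}$ are locally natural requires knowing how the functors act on 2-cells, which the paper reduces to generating 2-cells and handles with $\mathfrak{b}_{1,1}^{f,f}$. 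So the Beck data enters not only at binary constraints but already in the definition of $L$ on 2-cells; as written, your $L$ is simply undefined there.

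A second, smaller point: you propose to verify the oplax associativity and unit axioms by running the geometric-nerve argument of Proposition \ref{spanisoemb} in reverse. That argument goes from a normal oplax functor to the coherence conditions on $\mathfrak{b}$; in the converse direction the paper does not invoke the nerve at all, but verifies associativity directly, by comparing the two pastings of Beck 2-cells attached to the two iterated-pullback decompositions of a triple composite (diagram \eqref{spanisocohdiag}) and invoking the double-pullback coherence conditions, together with the (suppressed) naturality of the pseudofunctoriality constraints. Your closing sentence correctly identifies this as the delicate step; just be aware that it is a genuine pasting computation rather than a formal simplicial translation, since the nerve correspondence by itself does not tell you which simplicial map your Beck pair determines.
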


\subsection{Proving the universal property}

We prove Theorem \ref{unispanisothm} directly, as the properties
of generic bicategories cannot be used here. Also, for simplicity
we assume without loss of generality that $\mathscr{C}$ is a 2-category
and that the gregarious functors in question strictly preserve identities.
This is justified since every bicategory is equivalent to a 2-category
and every normal oplax functor is isomorphic to one which preserves
identity 1-cells strictly.
\begin{proof}
[Proof of Theorem \ref{unispanisothm}] We start by proving the first
universal property. We must prove that the functor
\[
\left(-\right)_{\otimes\Delta}\colon\mathbf{Greg}_{\otimes,\Delta}\left(\mathbf{Span}_{\textnormal{iso}}\left(\mathcal{E}\right),\mathscr{C}\right)\to\mathbf{LaxBeckPair}\left(\mathcal{E},\mathscr{C}\right)
\]
defines an equivalence of categories.

\noun{Essentially Surjective}. Given such a pair $F_{\Sigma}$ and
$F_{\Delta}$ with Beck data $\mathfrak{b}$ we may define local functors
\[
L_{X,Y}:\mathbf{Span}_{\textnormal{iso}}\left(\mathcal{E}\right)_{X,Y}\to\mathscr{C}_{X,Y},\qquad X,Y\in\mathcal{E}
\]
by the assignment (suppressing pseudofunctoriality of $F_{\Sigma}$
and $F_{\Delta}$) 
\[
\xymatrix@=1.5em{ & E\ar[dl]_{s}\ar[dd]|-{f}\ar[dr]^{t} &  &  &  &  & FE\ar[dr]^{F_{\otimes}f}\\
X &  & Y & \mapsto & FX\ar[r]^{F_{\Delta}u} & FM\ar[ur]^{F_{\Delta}f}\ar[rd]_{F_{\otimes}1} & \;\ar@{}[]|-{\Downarrow\mathfrak{b}_{1,1}^{f,f}} & FM\ar[r]^{F_{\otimes}v} & FY\\
 & M\ar[ul]^{u}\ar[ur]_{v} &  &  &  &  & FM\ar[ru]_{F_{\Delta}1}
}
\]
which is functorial by the Beck coherence conditions. An oplax constraint
cell 
\[
L\left(\vcenter{\hbox{\xymatrix@=1em{ & \bullet\ar[dr]^{qv'}\ar[dl]_{up'}\\
\bullet &  & \bullet
}
}}\right)\to L\left(\vcenter{\hbox{\xymatrix@=1em{ & \bullet\ar[dr]^{v}\ar[dl]_{u}\\
\bullet &  & \bullet
}
}}\right);L\left(\vcenter{\hbox{\xymatrix@=1em{ & \bullet\ar[dr]^{q}\ar[dl]_{p}\\
\bullet &  & \bullet
}
}}\right)
\]
is given by (suppressing pseudofunctoriality of $F_{\otimes}$ and
$F_{\Delta}$) 
\[
\xymatrix@=1.5em{ &  & \bullet\ar[dr]^{F_{\otimes}v'}\\
\bullet\ar[r]^{F_{\Delta}u} & \bullet\ar[ur]^{F_{\Delta}p'}\ar[dr]_{F_{\Sigma}v} & \;\ar@{}[]|-{\Downarrow\mathfrak{b}_{v,p}^{v',p'}} & \bullet\ar[r]^{F_{\otimes}q} & \bullet\\
 &  & \bullet\ar[ur]_{F_{\Delta}p}
}
\]

That these constraints satisfy the identity conditions trivially follows
from the unit condition on the Beck 2-cells. For the associativity
condition, suppose we are given diagrams of chosen pullbacks as below,
with $p$ the induced isomorphism of generalized pullbacks, that is
the associator for the triple $\left(a,b\right),\left(c,d\right),\left(e,f\right)$),
\[
\xymatrix@=1.5em{ &  &  &  &  &  &  &  &  &  & X\ar@{..>}[d]^{p}\\
 &  &  & X\ar[ld]_{i}\ar[rdrd]^{j} &  &  &  &  &  &  & Y\ar[ldld]_{m}\ar[rd]^{n}\\
 &  & \bullet\ar[ld]_{g}\ar[rd]^{h} &  &  &  & \ar@{}[r]|-{=} & \; &  &  &  & \bullet\ar[ld]_{k}\ar[rd]^{\ell}\\
 & \bullet\ar[rd]^{b}\ar[ld]_{a} &  & \bullet\ar[rd]^{d}\ar[ld]_{c} &  & \bullet\ar[rd]^{f}\ar[ld]_{e} &  &  & \bullet\ar[rd]^{b}\ar[ld]_{a} &  & \bullet\ar[rd]^{d}\ar[ld]_{c} &  & \bullet\ar[rd]^{f}\ar[ld]_{e}\\
\bullet &  & \bullet &  & \bullet &  & \bullet & \bullet &  & \bullet &  & \bullet &  & \bullet
}
\]
Then we must check that
\[
\xymatrix{L\left(agi,fj\right)\ar[d]_{Lp}\myar{\varphi}{r} & L\left(ag,dh\right);L\left(e,f\right)\myar{\varphi}{r} & \left(L\left(a,b\right);L\left(c,d\right)\right);L\left(e,f\right)\ar@{=}[d]\\
L\left(am,f\ell n\right)\myard{\varphi}{r} & L\left(a,b\right);L\left(ck,f\ell\right)\myard{\varphi}{r} & L\left(a,b\right);\left(L\left(c,d\right);L\left(e,f\right)\right)
}
\]
 commutes. The top path is a pasting of Beck 2-cells corresponding\footnote{By ``corresponding'' we mean that one assigns each pullback square
in \eqref{spanisocohdiag} to the Beck data for that square, as in
Definition \ref{Beckpair}.} to the left diagram in \eqref{spanisocohdiag} below, and the bottom
path is the pasting of Beck 2-cells corresponding to the right diagram
in \eqref{spanisocohdiag} below (suppressing pseudofunctoriality
constraints of $F_{\Sigma}$ and $F_{\Delta}$) which are equal by
the Beck coherence conditions.
\begin{equation}
\xymatrix@=1.5em{ &  &  &  &  &  &  &  &  &  & \bullet\ar[ld]_{p}\ar[rd]^{p}\\
 &  &  & \bullet\ar[ld]_{i}\ar[rd]^{np} &  &  &  &  &  & \bullet\ar[dr]_{\textnormal{id}}\ar[ld]_{ip^{-1}} &  & \bullet\ar[dl]^{\textnormal{id}}\ar[rd]^{n}\\
 &  & \bullet\ar[ld]_{g}\ar[rd]^{h} &  & \bullet\ar[rd]^{\ell}\ar[ld]_{k} &  & \; & \;\ar@{}[l]|-{=} & \bullet\ar[ld]_{g}\ar[dr]_{\textnormal{id}} &  & \bullet\ar[ld]_{ip^{-1}}\ar[rd]^{n} &  & \bullet\ar[rd]^{\ell}\ar[dl]^{\textnormal{id}}\\
 & \bullet\ar[rd]^{b}\ar[ld]_{a} &  & \bullet\ar[rd]^{d}\ar[ld]_{c} &  & \bullet\ar[rd]^{f}\ar[ld]_{e} &  & \bullet\ar[dr]_{\textnormal{id}} &  & \bullet\ar[ld]_{g}\ar[rd]^{h} &  & \bullet\ar[ld]_{k}\ar[rd]^{\ell} &  & \bullet\ar[dl]^{\textnormal{id}}\\
\bullet &  & \bullet &  & \bullet &  & \bullet &  & \bullet\ar[rd]^{b}\ar[ld]_{a} &  & \bullet\ar[rd]^{d}\ar[ld]_{c} &  & \bullet\ar[rd]^{f}\ar[ld]_{e}\\
 &  &  &  &  &  &  & \bullet &  & \bullet &  & \bullet &  & \bullet
}
\label{spanisocohdiag}
\end{equation}
For checking the oplax constraint cells are natural, consider a pair
of morphisms of spans
\[
\xymatrix@=1.5em{ & \bullet\ar[dl]_{s}\ar[dd]|-{f}\ar[dr]^{t} &  &  &  &  & \bullet\ar[dl]_{m}\ar[dd]|-{g}\ar[dr]^{n}\\
\bullet &  & \bullet &  &  & \bullet &  & \bullet\\
 & \bullet\ar[ul]^{u}\ar[ur]_{v} &  &  &  &  & \bullet\ar[ul]^{p}\ar[ur]_{q}
}
\]
We must check the commutativity of
\[
\xymatrix{L\left(sm',nt'\right)\ar[d]_{L\left(f;g\right)}\myar{\varphi}{r} & L\left(s,t\right);L\left(m,n\right)\ar[d]^{Lf;Lg}\\
L\left(up',qv'\right)\myard{\varphi}{r} & L\left(u,v\right);L\left(p,q\right)
}
\]
The top path of this diagram corresponds to a pasting of Beck 2-cells
for the left diagram below, and the bottom path corresponds to the
pasting of Beck 2-cells for the right diagram below. Hence the commutativity
of this diagram amounts to applying the Beck coherence conditions
to the diagrams of pullbacks (which compose to the same pullback)
\[
\xymatrix@=0.5em{\bullet\ar[rrrr]^{t'}\ar[dddd]_{m'} &  &  &  & \bullet\ar[rr]^{g}\ar[dd]_{g} &  & \bullet\ar[dd]^{1}\\
 &  &  &  &  &  &  &  &  &  &  &  & \bullet\ar[rr]^{h}\ar[dd]_{h} &  & \bullet\ar[rr]^{v'}\ar[dd]^{1} &  & \bullet\ar[dd]^{1}\\
 &  &  &  & \bullet\ar[dd]_{p}\ar[rr]^{1} &  & \bullet\ar[dd]^{p}\\
 &  &  &  &  &  &  &  &  & = &  &  & \bullet\ar[rr]_{1}\ar[dd]_{p'} &  & \bullet\ar[rr]^{v'}\ar[dd]_{p'} &  & \bullet\ar[dd]^{p}\\
\bullet\ar[dd]_{f}\ar[rr]^{f} &  & \bullet\ar[rr]^{v}\ar[dd]_{1} &  & \bullet\ar[rr]^{1}\ar[dd]_{1} &  & \bullet\ar[dd]^{1}\\
 &  &  &  &  &  &  &  &  &  &  &  & \bullet\ar[rr]_{1} &  & \bullet\ar[rr]_{v} &  & \bullet\\
\bullet\ar[rr]_{1} &  & \bullet\ar[rr]_{v} &  & \bullet\ar[rr]_{1} &  & \bullet
}
\]
where $h$ is the morphism of spans arising from horizontally composing
the morphisms of spans $f$ and $g$.

Inherited from Proposition \ref{spanadjprop} is the fact that all
adjunctions in $\mathbf{Span}_{\textnormal{iso}}\left(\mathcal{E}\right)$
are of the form $\left(1,f\right)\dashv\left(f,1\right)$ up to isomorphism,
where $f$ must be invertible. To see $F$ is gregarious, meaning
that this gives an adjunction $F_{\Sigma}f\dashv F_{\Delta}f$ in
$\mathscr{C}$, note that we may construct the unit and counit as
the Beck 2-cells arising from the pullback squares
\[
\xymatrix{\bullet\ar[r]^{1}\ar[d]_{1} & \bullet\ar[d]^{f} &  &  & \bullet\ar[r]^{f}\ar[d]_{f} & \bullet\ar[d]^{1}\\
\bullet\ar[r]_{f} & \bullet &  &  & \bullet\ar[r]_{1} & \bullet
}
\]

\noun{Fully Faithful}. Suppose we are given two gregarious functors
$F,G\colon\mathbf{Span}_{\textnormal{iso}}\left(\mathcal{E}\right)\to\mathscr{C}$
along with their restrictions $F_{\otimes},G_{\otimes}$ and $F_{\Delta},G_{\Delta}$
and families of Beck 2-cells $^{F}\mathfrak{b}$ and $^{G}\mathfrak{b}$.

We first check the assignment of icons is surjective. Suppose we are
given icons $\alpha\colon F_{\otimes}\to G_{\otimes}$ and $\beta\colon F_{\Delta}\to G_{\Delta}$
such that \eqref{conditionBeck} holds. Then we may define an icon
$\gamma\colon F\to G$ on each span $\left(s,t\right)$ by
\begin{equation}
\xymatrix{F\left(s,t\right)\ar[rr]^{\gamma_{s,t}}\ar[d]_{\varphi} &  & G\left(s,t\right)\ar[d]^{\psi}\\
F_{\otimes}t\cdot F_{\Delta}s\ar[rr]_{\alpha_{t}\ast\beta_{s}} &  & G_{\otimes}t\cdot G_{\Delta}s
}
\label{applies}
\end{equation}
where $\varphi$ and $\psi$ are the appropriate oplax constraint
cells (necessarily invertible above). Now \eqref{conditionBeck} forces
$\gamma$ to be locally natural, as it suffices to check naturality
on generating 2-cells, that is diagrams such as 
\[
\xymatrix@=1em{ & \bullet\ar[rd]^{f}\ar[dd]^{f}\ar[dl]_{f}\\
\bullet &  & \bullet\\
 & \bullet\ar[ur]_{1}\ar[ul]^{1}
}
\]
with $f$ invertible (this only needs trivial pullbacks corresponding
to $\mathfrak{b}_{1,1}^{f,f}$). For checking $\gamma$ is an icon,
the identity condition on $\gamma$ is from that of $\alpha$ and
$\beta$. The composition condition is precisely \eqref{conditionBeck}. 

We now check that the assignment of icons is injective. Suppose two
given icons $\sigma,\delta$ both restrict to icons $\alpha$ and
$\beta$. Then since the icons $\sigma$ and $\delta$ respect the
composite of the spans
\[
\xymatrix@=1em{ & \bullet\ar[rd]^{1}\ar[ld]_{s} &  &  &  & \bullet\ar[rd]^{t}\ar[ld]_{1}\\
\bullet &  & \bullet &  & \bullet &  & \bullet
}
\]
both $\sigma$ and $\delta$ must satisfy \eqref{applies} (in place
of $\gamma$) and so are equal.

\noun{Restrictions. }It is clear from the above that the oplax constraints
are invertible precisely when the Beck data is invertible.
\end{proof}

\section{Universal properties of polynomials with cartesian 2-cells\label{unipolycart}}

In this section we prove the universal property of the bicategory
of polynomials with cartesian 2-cells, denoted $\mathbf{Poly}_{c}\left(\mathcal{E}\right)$.
We will keep the proof as analogous to the case of spans as possible,
though it still becomes somewhat more complicated. 

\subsection{Stating the universal property}

This universal property of $\mathbf{Poly}_{c}\left(\mathcal{E}\right)$
turns out to be an amalgamation of that of $\mathbf{Span}\left(\mathcal{E}\right)$
and $\mathbf{Span}_{\textnormal{iso}}\left(\mathcal{E}\right)$; in
particular to give a pseudofunctor $\mathbf{Poly}_{c}\left(\mathcal{E}\right)\to\mathscr{C}$
is to give a pair of pseudofunctors 
\[
\mathbf{Span}\left(\mathcal{E}\right)\to\mathscr{C},\qquad\mathbf{Span}_{\textnormal{iso}}\left(\mathcal{E}\right)\to\mathscr{C}
\]
which ``$\Delta$-agree'' , that is coincide on objects and on spans
of the form
\[
\xymatrix{Y & X\ar[r]^{1_{X}}\ar[l]_{f} & X}
\]
with an additional condition asking that certain ``distributivity
morphisms'' be invertible. For the purposes of the proof we will
give a slightly different but equivalent description, for which we
will need the following definitions.
\begin{defn}
\label{distpair} Given a category $\mathcal{E}$ with chosen pullbacks,
we may define the category of \emph{lax Beck triples} from $\mathcal{E}$
to a bicategory $\mathscr{C}$, denoted $\mathbf{LaxBeckTriple}\left(\mathcal{E},\mathscr{C}\right)$.
An object consists of a triple of pseudofunctors which agree on objects
\[
F_{\Sigma}\colon\mathcal{E}\to\mathscr{C},\qquad F_{\Delta}\colon\mathcal{E}^{\textnormal{op}}\to\mathscr{C},\qquad F_{\otimes}\colon\mathcal{E}\to\mathscr{C}
\]
such that $F_{\Sigma}f\dashv F_{\Delta}f$ for all morphisms $f$
in $\mathcal{E}$, along with ``Beck data'' denoted by $^{F}\mathfrak{b}$
and consisting of for each pullback square
\begin{equation}
\xymatrix{\bullet\ar[d]_{g'}\myar{f'}{r} & \bullet\ar[d]^{g} &  &  & \bullet\myar{F_{\otimes}f'}{r}\ar@{}[dr]|-{\Downarrow\mathfrak{b}_{f,g}^{f',g'}} & \bullet\\
\bullet\ar[r]_{f} & \bullet &  &  & \bullet\ar[r]_{F_{\otimes}f}\ar[u]^{F_{\Delta}g'} & \bullet\ar[u]_{F_{\Delta}g}
}
\label{Beckmorphismscart}
\end{equation}
in $\mathcal{E}$ as on the left, a 2-cell as on the right subject
to the binary and nullary Beck coherence conditions as in Definition
\ref{Beckpair}. 

A morphism $\left(F_{\Sigma},F_{\Delta},F_{\otimes},{}^{F}\mathfrak{b}\right)\to\left(G_{\Sigma},G_{\Delta},G_{\otimes},{}^{G}\mathfrak{b}\right)$
in this category consists of an invertible icon $\beta\colon F_{\Delta}\Rightarrow G_{\Delta}$
and icon $\gamma\colon F_{\otimes}\Rightarrow G_{\otimes}$ such that
for each pullback square in $\mathcal{E}$ as above, the diagram
\begin{equation}
\xymatrix{F_{\otimes}f'\cdot F_{\Delta}g'\ar[r]^{\gamma_{f'}\ast\beta_{g'}}\ar[d]_{^{F}\mathfrak{b}_{f,g}^{f',g'}} & G_{\otimes}f'\cdot G_{\Delta}g'\ar[d]^{^{G}\mathfrak{b}_{f,g}^{f',g'}}\\
F_{\Delta}g\cdot F_{\otimes}f\ar[r]_{\beta_{g}\ast\gamma_{f}} & G_{\Delta}g\cdot G_{\otimes}f
}
\label{polyBeckcoh}
\end{equation}
commutes.
\end{defn}

There are a number of conditions which may be imposed on a lax Beck
triple; these are defined as follows.
\begin{defn}
\label{distcondition} We say a\emph{ }lax Beck triple $\left(F_{\Sigma},F_{\Delta},F_{\otimes},{}^{F}\mathfrak{b}\right)$
from $\mathcal{E}$ to a bicategory $\mathscr{C}$ is a \emph{Beck
triple }if both:
\begin{enumerate}
\item the $\Delta\otimes$ condition holds; meaning each component of the
Beck data $\mathfrak{b}_{f,g}^{f',g'}$ is invertible;
\item the $\Sigma\Delta$ condition holds; meaning each component of the
$F_{\Sigma}$-$F_{\Delta}$ Beck data is invertible\footnote{This is equivalent to asking the gregarious functor $\mathbf{Span}\left(\mathcal{E}\right)\to\mathscr{C}$
resulting from $F_{\Sigma}$ be a pseudofunctor.};
\end{enumerate}
Furthermore, we say such a Beck triple is a \emph{distributive Beck
triple} if in addition:

\begin{enumerate}\setcounter{enumi}{2}\item{ the $\Sigma\otimes$
condition (distributivity condition) holds; meaning that for any distributivity
pullback in $\mathcal{E}$ as on the left below
\begin{equation}
\begin{aligned}\xymatrix@=3em{\bullet\ar[r]^{q}\ar[d]_{p} & \bullet\ar[dd]^{r} &  &  & \bullet\ar[r]^{F_{\otimes}q}\ar@{}[drd]|-{\Downarrow\mathfrak{b}_{f,r}^{q,up'}} & \bullet\ar[rd]^{F_{\Sigma}r}\\
\bullet\ar[d]_{u} &  &  & \bullet\ar[rd]_{F_{\Sigma}u}^{\!\!\!\!\Downarrow\eta_{Fu}}\ar[r]^{\textnormal{id}}\ar[ur]_{=}^{F_{\Delta}p} & \bullet\ar[u]|-{F_{\Delta p}} & \ar@{}[r]|-{\Downarrow\epsilon_{Fr}} & \bullet\\
\bullet\ar[r]_{f} & \bullet &  &  & \bullet\ar[r]_{F_{\otimes}f}\ar[u]|-{F_{\Delta}u} & \bullet\ar[uu]|-{F_{\Delta}r}\ar[ru]_{\textnormal{id}}
}
\end{aligned}
\label{distpbexample}
\end{equation}
the corresponding ``distributivity morphism'' (defined as the pasting
on the right above) is invertible. }\end{enumerate}.

In particular, we define a \emph{Beck triple }to be a lax Beck triple
such that both conditions $\left(1\right)$ and $\left(2\right)$
hold, and a \emph{DistBeck triple} to be a Beck triple also satisfying
$\left(3\right)$. We denote the corresponding subcategories of $\mathbf{LaxBeckTriple}\left(\mathcal{E},\mathscr{C}\right)$
as $\mathbf{BeckTriple}\left(\mathcal{E},\mathscr{C}\right)$ and
$\mathbf{DistBeckTriple}\left(\mathcal{E},\mathscr{C}\right)$ respectively.
\end{defn}

There are a number of canonical embeddings into $\mathbf{Poly}_{c}\left(\mathcal{E}\right)$
to mention; the most obvious being the embeddings
\[
\left(-\right)_{\Sigma}:\mathcal{E}\to\mathbf{Poly}_{c}\left(\mathcal{E}\right),\quad\left(-\right)_{\Delta}:\mathcal{E}^{\textnormal{op}}\to\mathbf{Poly}_{c}\left(\mathcal{E}\right),\quad\left(-\right)_{\otimes}:\mathcal{E}\to\mathbf{Poly}_{c}\left(\mathcal{E}\right)
\]
 which are defined on objects by sending an object of $\mathcal{E}$
to itself, and are defined on each morphism in $\mathcal{E}$ by the
assignments
\[
\xymatrix@R=1em{\left(-\right)_{\Sigma}\colon & X\ar[r]^{f} & Y & \mapsto & X & X\ar[l]_{1_{X}}\ar[r]^{1_{X}} & X\ar[r]^{f} & Y\\
\left(-\right)_{\Delta}\colon & X\ar[r]^{f} & Y & \mapsto & Y & X\ar[l]_{f}\ar[r]^{1_{X}} & X\ar[r]^{1_{X}} & X\\
\left(-\right)_{\otimes}\colon & X\ar[r]^{f} & Y & \mapsto & Y & X\ar[r]^{f}\ar[l]_{1_{X}} & Y\ar[r]^{1_{Y}} & Y
}
\]
We also have the inclusion $\left(-\right)_{\Sigma\Delta}\colon\mathbf{Span}\left(\mathcal{E}\right)\to\mathbf{Poly}_{c}\left(\mathcal{E}\right)$
of spans into polynomials given by the assignment 
\[
\xymatrix{ & \bullet\ar[rd]^{t}\ar[ld]_{s}\ar[dd]|-{f} &  &  &  & \bullet\ar[dl]_{s}\ar[r]^{\textnormal{id}}\ar[dd]_{f} & \bullet\ar[rd]^{t}\ar[dd]^{f}\\
\bullet &  & \bullet & \mapsto & \bullet &  &  & \bullet\\
 & \bullet\ar[ru]_{v}\ar[lu]^{u} &  &  &  & \bullet\ar[ul]^{u}\ar[r]_{\textnormal{id}} & \bullet\ar[ur]_{v}
}
\]
The less obvious embedding $\left(-\right)_{\Delta\otimes}\colon\mathbf{Span}_{\textnormal{iso}}\left(\mathcal{E}\right)\to\mathbf{Poly}_{c}\left(\mathcal{E}\right)$
is the canonical embedding of spans with invertible 2-cells into polynomials,
given by the assignment
\[
\xymatrix{ & \bullet\ar[rd]^{t}\ar[ld]_{s}\ar[dd]|-{f} &  &  &  & \bullet\ar[dl]_{s}\ar[r]^{t}\ar[dd]_{f} & \bullet\ar[rd]^{\textnormal{id}}\ar[dd]^{\textnormal{id}}\\
\bullet &  & \bullet & \mapsto & \bullet &  &  & \bullet\\
 & \bullet\ar[ru]_{v}\ar[lu]^{u} &  &  &  & \bullet\ar[ul]^{u}\ar[r]_{v} & \bullet\ar[ru]_{\textnormal{id}}
}
\]
where one must note the appropriate square is a pullback since $f$
is invertible.

We will need to consider gregarious functors which restrict to pseudofunctors
on the embeddings we have just defined, and so we make the following
definition.
\begin{defn}
Let $\mathcal{E}$ be a locally cartesian closed category, let $\mathscr{C}$
be a bicategory and form the category $\mathbf{Greg}\left(\mathbf{Poly}_{c}\left(\mathcal{E}\right),\mathscr{C}\right)$.
We define $\mathbf{Greg}_{\otimes}\left(\mathbf{Poly}_{c}\left(\mathcal{E}\right),\mathscr{C}\right)$
as the subcategory of gregarious functors $F\colon\mathbf{Poly}_{c}\left(\mathcal{E}\right)\to\mathscr{C}$
such that the restriction $F_{\otimes}\colon\mathcal{E}\to\mathscr{C}$
is pseudo. Define $\mathbf{Greg}_{\Sigma\Delta,\Delta\otimes}\left(\mathbf{Poly}_{c}\left(\mathcal{E}\right),\mathscr{C}\right)$
as the subcategory of gregarious functors for which both restrictions
$F_{\Sigma\Delta}\colon\mathbf{Span}\left(\mathcal{E}\right)\to\mathscr{C}$
and $F_{\Delta\otimes}\colon\mathbf{Span}_{\textnormal{iso}}\left(\mathcal{E}\right)\to\mathscr{C}$
are pseudo.
\end{defn}

\begin{rem}
Note that a gregarious functor $F\colon\mathbf{Poly}_{c}\left(\mathcal{E}\right)\to\mathscr{C}$
automatically restricts to pseudofunctors $F_{\Sigma}$ and $F_{\Delta}$.
This is why we have omitted these conditions. Also note that oplax
constraints of the form 
\[
F\left(\vcenter{\hbox{\xymatrix@=1em{ & \bullet\ar[ld]_{s}\ar[r]^{t} & \bullet\ar[rd]^{\textnormal{id}}\\
\bullet &  &  & \bullet
}
}}\right)\to F\left(\vcenter{\hbox{\xymatrix@=1em{ & \bullet\ar[ld]_{s}\ar[r]^{\textnormal{id}} & \bullet\ar[rd]^{\textnormal{id}}\\
\bullet &  &  & \bullet
}
}}\right);F\left(\vcenter{\hbox{\xymatrix@=1em{ & \bullet\ar[ld]_{\textnormal{id}}\ar[r]^{t} & \bullet\ar[rd]^{\textnormal{id}}\\
\bullet &  &  & \bullet
}
}}\right)
\]
are automatically invertible by gregariousness.
\end{rem}

\begin{defn}
\label{xidef} Given a category $\mathcal{E}$ with pullbacks, we
define 
\[
\mathbf{Greg}\left(\mathbf{Span}\left(\mathcal{E}\right),\mathscr{C}\right)\times_{\textnormal{\ensuremath{\Delta}}}\mathbf{Greg}_{\otimes,\Delta}\left(\mathbf{Span}_{\textnormal{iso}}\left(\mathcal{E}\right),\mathscr{C}\right)
\]
 to be the full subcategory of 
\[
\mathbf{Greg}\left(\mathbf{Span}\left(\mathcal{E}\right),\mathscr{C}\right)\times\mathbf{Greg}_{\otimes,\Delta}\left(\mathbf{Span}_{\textnormal{iso}}\left(\mathcal{E}\right),\mathscr{C}\right)
\]
consisting of pairs $H\colon\mathbf{Span}\left(\mathcal{E}\right)\to\mathscr{C}$
and $K\colon\mathbf{Span}_{\textnormal{iso}}\left(\mathcal{E}\right)\to\mathscr{C}$
which coincide on objects and on spans of the form 
\[
\xymatrix{Y & X\ar[r]^{1_{X}}\ar[l]_{f} & X}
\]
Noting that this forces $H_{\Sigma}f\dashv H_{\Delta}f=K_{\Delta}f$
for all morphisms $f$ in $\mathcal{E}$, we denote by $\Xi$ the
assignment of such a $H$ and $K$ to the lax Beck triple
\[
H_{\Sigma}\colon\mathcal{E}\to\mathscr{C},\qquad K_{\Delta}\colon\mathcal{E}^{\textnormal{op}}\to\mathscr{C},\qquad K_{\otimes}\colon\mathcal{E}\to\mathscr{C}
\]
with $K_{\Delta}$-$K_{\otimes}$ Beck data $^{K}\mathfrak{b}$ given
by Proposition \ref{spanisoemb}.
\end{defn}

We now have enough to state the universal property of polynomials.
\begin{thm}
[Universal Properties of Polynomials: Cartesian Setting] \label{unipolycartthm}
Given a locally cartesian closed category $\mathcal{E}$ with chosen
pullbacks and distributivity pullbacks, denote by $\varUpsilon$ the
composite operation
\[
\xymatrix@=0.8em{\mathbf{Greg}_{\otimes}\left(\mathbf{Poly}_{c}\left(\mathcal{E}\right),\mathscr{C}\right)\ar[d]\\
\mathbf{Greg}\left(\mathbf{Span}\left(\mathcal{E}\right),\mathscr{C}\right)\times_{\Delta}\mathbf{Greg}_{\otimes,\Delta}\left(\mathbf{Span}_{\textnormal{iso}}\left(\mathcal{E}\right),\mathscr{C}\right)\ar[d]\\
\mathbf{Lax}\mathbf{Beck}\mathbf{Triple}\left(\mathcal{E},\mathscr{C}\right)
}
\]
where the first operation is composition with the embeddings $\left(-\right)_{\Sigma\Delta}$
and $\left(-\right)_{\Delta\otimes}$, and the second operation is
$\Xi$ from Definition \ref{xidef}. Then $\varUpsilon$ defines the
equivalence of categories
\[
\mathbf{Greg}_{\otimes}\left(\mathbf{Poly}_{c}\left(\mathcal{E}\right),\mathscr{C}\right)\simeq\mathbf{Lax}\mathbf{Beck}\mathbf{Triple}\left(\mathcal{E},\mathscr{C}\right)
\]
which restricts to the equivalence
\[
\mathbf{Greg}_{\Sigma\Delta,\Delta\otimes}\left(\mathbf{Poly}_{c}\left(\mathcal{E}\right),\mathscr{C}\right)\simeq\mathbf{Beck}\mathbf{Triple}\left(\mathcal{E},\mathscr{C}\right)
\]
and further restricts to the equivalence
\[
\mathbf{Icon}\left(\mathbf{Poly}_{c}\left(\mathcal{E}\right),\mathscr{C}\right)\simeq\mathbf{Dist}\mathbf{Beck}\mathbf{Triple}\left(\mathcal{E},\mathscr{C}\right)
\]
for any bicategory $\mathscr{C}$.
\end{thm}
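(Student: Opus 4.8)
The plan is to prove all three equivalences at once by factoring the functor $\varUpsilon$ through its two constituent operations and showing each is an equivalence separately, with the genuinely new work concentrated in the first (composition with the span embeddings) and the generic structure of $\mathbf{Poly}_{c}\left(\mathcal{E}\right)$ doing the heavy lifting.

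First I would dispatch the second operation $\Xi$ of Definition \ref{xidef}. By Theorem \ref{unispansthm} the assignment $H\mapsto H_{\Sigma}$ is an equivalence $\mathbf{Greg}\left(\mathbf{Span}\left(\mathcal{E}\right),\mathscr{C}\right)\simeq\mathbf{Sin}\left(\mathcal{E},\mathscr{C}\right)$, and by Theorem \ref{unispanisothm} the assignment $K\mapsto\left(K_{\otimes},K_{\Delta},{}^{K}\mathfrak{b}\right)$ is an equivalence $\mathbf{Greg}_{\otimes,\Delta}\left(\mathbf{Span}_{\textnormal{iso}}\left(\mathcal{E}\right),\mathscr{C}\right)\simeq\mathbf{LaxBeckPair}\left(\mathcal{E},\mathscr{C}\right)$. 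A lax Beck triple is precisely a sinister pseudofunctor $F_{\Sigma}$ (carrying its chosen right adjoint $F_{\Delta}$) glued to a lax Beck pair $\left(F_{\otimes},F_{\Delta},\mathfrak{b}\right)$ along the shared $F_{\Delta}$. Since the fibre product $\times_{\Delta}$ in the intermediate category is exactly the condition $H_{\Delta}=K_{\Delta}$ that matches these two $F_{\Delta}$-data, $\Xi$ is the restriction of the product of the two span equivalences to the matching fibres, hence itself an equivalence. Thus $\varUpsilon$ is an equivalence if and only if the first operation is.

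For essential surjectivity of the first operation I would, given a compatible pair $\left(H,K\right)$, build a gregarious functor $L\colon\mathbf{Poly}_{c}\left(\mathcal{E}\right)\to\mathscr{C}$ sending a polynomial $I\xleftarrow{s}E\xrightarrow{p}B\xrightarrow{t}J$ to the composite $H_{\Sigma}t\cdot K_{\otimes}p\cdot K_{\Delta}s$ (suppressing pseudofunctoriality constraints), extending to cartesian $2$-cells by mates and Beck data exactly as in Lemma \ref{spanlocally}. To equip $L$ with its oplax structure I would invoke the generic reduction of Proposition \ref{polycoh} (the polynomial analogue of Proposition \ref{spancoh}): it suffices to specify comultiplication maps $\Phi_{\delta}$ indexed by diagonals of polynomial composition and counit maps $\Lambda_{\epsilon}$ indexed by augmentations, and to verify naturality, associativity and unitality \emph{only} for these. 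The comultiplication at a single diagonal is assembled from the two span-level contributions together with the distributivity morphism of \eqref{distpbexample}, while naturality and unitality reduce to functoriality of mates and the Beck coherence conditions carried by the triple. By construction $L_{\Sigma\Delta}=H$ and $L_{\Delta\otimes}=K$. For fully faithfulness I would argue, as in Lemmas \ref{mateinv} and \ref{spaniconinv}, that the diagonal decomposition factors every polynomial through images of the generating adjunctions of Proposition \ref{polyadjprop}, forcing any icon $L\Rightarrow L'$ to equal the mate of its value on right adjoints wherever a component sits at a left-adjoint $1$-cell; this pins the icon down by its two span restrictions (injectivity), and conversely any compatible pair of span-level icons assembles into a $\mathbf{Poly}_{c}$-icon by checking binary coherence on generating $2$-cells, which is exactly \eqref{polyBeckcoh}. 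The two restricted equivalences then follow formally: requiring the span restrictions to be pseudo is, by Theorems \ref{unispansthm} and \ref{unispanisothm}, precisely conditions $\left(1\right)$ and $\left(2\right)$, giving $\mathbf{BeckTriple}$; and $L$ is a genuine pseudofunctor exactly when every oplax constraint is invertible, which under the diagonal decomposition holds iff the two span contributions are invertible and, in addition, every distributivity morphism \eqref{distpbexample} is invertible, i.e.\ iff the triple is a DistBeck triple.

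The main obstacle I anticipate is the associativity of comultiplication for $L$. A priori this is a pentagon coherence that would demand comparing two ways of composing distributivity pullbacks — precisely the worst calculation flagged in the introduction. The decisive advantage of treating $\mathbf{Poly}_{c}\left(\mathcal{E}\right)$ as a generic bicategory is that Proposition \ref{polycoh} replaces this with associativity of the diagonal comultiplication maps $\Phi_{\delta}$ alone. Confirming that this reduced associativity holds — tracing the diagonal factorisation of a triple polynomial composite and verifying the two bracketings of $\Phi$ agree via functoriality of mates and the Beck\textendash Chevalley data, without ever unpacking a composite of distributivity pullbacks — is the delicate heart of the argument and where the method pays off.
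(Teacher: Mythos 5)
Your core construction matches the paper's: build $L$ from the triple via mates and Beck data (as in Lemma \ref{polyclocally}), equip it with an oplax structure through the generic reduction of Proposition \ref{polycoh}, and characterize the restricted equivalences through invertibility of constraint cells. But your opening reduction contains a genuine error. The intermediate category of Definition \ref{xidef} is a \emph{full} subcategory of the product, so its morphisms are arbitrary pairs of icons $\left(\sigma,\tau\right)$ with no compatibility imposed between $\sigma_{\Delta}$ and $\tau_{\Delta}$; whereas a morphism of lax Beck triples consists only of a pair $\left(\beta,\gamma\right)$ on the $\Delta$- and $\otimes$-parts, with no independent $\Sigma$-datum. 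Consequently $\Xi$ is not faithful: on morphisms it records $\left(\tau_{\Delta},\tau_{\otimes}\right)$ and forgets $\sigma$ entirely, so $\left(\sigma,\tau\right)$ and $\left(\sigma',\tau\right)$ with $\sigma\neq\sigma'$ have the same image. Dually, the first operation is not full: any icon $\alpha$ on $\mathbf{Poly}_{c}\left(\mathcal{E}\right)$ satisfies $\left(\alpha_{\Sigma\Delta}\right)_{\Delta}=\left(\alpha_{\Delta\otimes}\right)_{\Delta}$, so a pair such as $\left(\mathrm{id}_{H},\tau\right)$ with $\tau_{\Delta}\neq\mathrm{id}$ is not the restriction of any $\alpha$. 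Thus neither factor of $\varUpsilon$ is an equivalence, and your claim that ``$\varUpsilon$ is an equivalence if and only if the first operation is'' is false; the composite is an equivalence precisely because the morphisms collapsed by $\Xi$ are exactly those missed by restriction. Proving the theorem as stated requires what the paper does: a direct proof that $\alpha\mapsto\left(\alpha_{\Delta},\alpha_{\otimes}\right)$ is bijective on hom-sets, with injectivity coming from Lemma \ref{mateinv} (which forces $\alpha_{\Sigma_{f}}$ to be determined as a mate of $\alpha_{\Delta_{f}}$) together with the decomposition $\left(s,p,t\right)=\left(s,1,1\right);\left(1,p,1\right);\left(1,1,t\right)$ and gregariousness, and surjectivity by checking that any collection satisfying these constraints and \eqref{polyBeckcoh} assembles into an icon. (Your plan can be repaired by replacing the fibre product with the non-full subcategory whose morphisms are pairs of icons agreeing on $\Delta$-spans---then both factors are indeed equivalences---but that is not the category in the statement.)

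A lesser but real inaccuracy: the comultiplication maps $\Phi_{s,p_{1},h,p_{2},t}$ indexed by diagonals do \emph{not} involve the distributivity morphisms of \eqref{distpbexample}; they are pastings of the pseudofunctoriality constraint $F_{\otimes}p\cong F_{\otimes}p_{2}\cdot F_{\otimes}p_{1}$ with the unit $\eta_{Fh}$ only. This is exactly why associativity of comultiplication---which you flag as ``the delicate heart''---is in fact almost trivial in the paper, following from associativity of the constraints of $F_{\otimes}$; the genuinely delicate verifications are naturality of $\Phi$ with respect to cartesian morphisms (using the horizontal binary axiom on the Beck data) and the full-faithfulness argument above. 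Distributivity morphisms enter only at the final step, when the induced oplax constraint cells at composites of the form $\left(1,1,u\right);\left(1,f,1\right)$ are identified, via the decomposition of constraints into Beck composites and distributivity composites, in order to show that invertibility of all constraints is equivalent to the DistBeck condition.
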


\begin{rem}
There are five other equivalences of categories since each of the
three independent conditions $\Sigma\Delta$, $\Delta\otimes$ and
$\Sigma\otimes$ of Definition \ref{distcondition} may or may not
be enforced (giving a total of eight conditions). However, as the
three above appear to be the most useful, we will not mention the
others.
\end{rem}

\subsection{Proving the universal property}

Before proving Theorem \ref{unipolycartthm} we will need to show
that given a lax Beck triple $\mathcal{E}\to\mathscr{C}$ one may
reconstruct an oplax functor $\mathbf{Poly}_{c}\left(\mathcal{E}\right)\to\mathscr{C}$.
The following lemma and subsequent propositions describe this construction.
Also note that we are keeping the proof as similar as possible to
the case of spans, starting with the below lemma which is the analogue
of Lemma \ref{spanlocally}.
\begin{lem}
\label{polyclocally} Let $\mathcal{E}$ be a locally cartesian closed
category seen as a locally discrete 2-category, and let $\mathscr{C}$
be a bicategory. Suppose we are given a lax Beck triple consisting
of pseudofunctors
\[
F_{\Sigma}\colon\mathcal{E}\to\mathscr{C},\qquad F_{\Delta}\colon\mathcal{E}^{\textnormal{op}}\to\mathscr{C},\qquad F_{\otimes}\colon\mathcal{E}\to\mathscr{C}
\]
and Beck 2-cells $\mathfrak{b}$. We may then define local functors
\[
L_{X,Y}:\mathbf{Poly}_{c}\left(\mathcal{E}\right)_{X,Y}\to\mathscr{C}_{LX,LY},\qquad X,Y\in\mathcal{E}
\]
by the assignment $T\to FT$ on objects, and
\[
\xymatrix@=1.5em{ & E\ar[dl]_{s}\ar[r]^{p}\ar[dd]_{f}\ar@{}[rdd]|-{\textnormal{pb}} & B\ar[dr]^{t}\ar[dd]^{g} &  &  &  & E\ar[r]^{F_{\otimes}p}\ar@{}[ddr]|-{\Downarrow\mathfrak{b}} & B\ar[dr]^{F_{\Sigma}t}\\
X &  &  & Y & \mapsto & X\ar[ur]^{F_{\Delta}s}\ar[dr]_{F_{\Delta}u} & \;\ar@{}[l]|-{\Downarrow\alpha} & \;\ar@{}[r]|-{\Downarrow\gamma} & Y\\
 & M\ar[r]_{q}\ar[ul]^{u} & N\ar[ur]_{v} &  &  &  & M\ar[r]_{F_{\otimes}q}\ar[uu]|-{F_{\Delta}f} & N\ar[ur]_{F_{\Sigma}v}\ar[uu]|-{F_{\Delta}g}
}
\]
on morphisms, where $\alpha$ is the mate of the isomorphism on the
left below
\[
\xymatrix{FE\ar[r]^{1_{FE}}\ar[d]_{F_{\Sigma}s}\ar@{}[dr]|-{\cong} & FE\ar[d]^{F_{\Sigma}u\cdot F_{\Sigma}f} &  &  & FB\ar[r]^{F_{\Sigma}t}\ar[d]_{F_{\Sigma}g}\ar@{}[dr]|-{\cong} & FY\ar[d]^{1_{FY}}\\
FX\ar[r]_{1_{FX}} & FX &  &  & FN\ar[r]_{F_{\Sigma}v} & FY
}
\]
under the adjunctions $F_{\Sigma}s\dashv F_{\Delta}s$ and $F_{\Sigma}u\cdot F_{\Sigma}f\dashv F_{\Delta}f\cdot F_{\Delta}u$,
$\gamma$ is the mate of the isomorphism on the right above under
the adjunctions $F_{\Sigma}g\dashv F_{\Delta}g$ and $\textnormal{1}_{FY}\dashv\textnormal{1}_{FY}$,
and $\mathfrak{b}_{q,g}^{p,f}$ (simply denoted $\mathfrak{b}$ for
convenience) is the component of the Beck data at the given pullback.
\end{lem}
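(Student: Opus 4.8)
The plan is to show that the morphism assignment above is functorial, the object assignment being immediate; since the image of a cartesian 2-cell is the pasting of the three regions $\alpha$, $\mathfrak{b}$ and $\gamma$ (sitting respectively over the $F_{\Delta}$-leg, the middle $F_{\otimes}$-part, and the $F_{\Sigma}$-leg), it suffices to analyse the behaviour of these three regions under identities and under vertical composition of cartesian 2-cells, and then to check that the resulting pastings reassemble. I would keep the argument parallel to that of Lemma \ref{spanlocally}, the only genuinely new ingredient being the treatment of the middle region governed by the Beck data $\mathfrak{b}$.

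For identity 2-cells we have $f=\mathrm{id}$ and $g=\mathrm{id}$, so the middle square is a nullary pullback and $\mathfrak{b}$ is an identity by the horizontal and vertical nullary pullback conditions of Definition \ref{Beckpair}; simultaneously the isomorphisms of which $\alpha$ and $\gamma$ are the mates reduce to identity 2-cells, whence $\alpha$ and $\gamma$ are identities by the triangle identities, so $L_{X,Y}$ preserves identities. For composition, the key observation is that vertically composing two cartesian 2-cells composes their source legs via $s=uf$ and their target legs via $t=vg$, and stacks their middle pullback squares into exactly the vertical double pullback configuration \eqref{pullbackcohvert}. I would then argue region by region: the two $\alpha$-regions compose to the $\alpha$-region of the composite by horizontal functoriality of mates together with the pseudofunctoriality constraints of $F_{\Delta}$ (verbatim the span computation of Lemma \ref{spanlocally}); dually the two $\gamma$-regions compose by functoriality of mates and the pseudofunctoriality of $F_{\Sigma}$; and the two Beck 2-cells compose to the Beck 2-cell of the stacked pullback by the vertical double pullback condition of Definition \ref{Beckpair}, after absorbing the pseudofunctoriality constraints of $F_{\otimes}$ and $F_{\Delta}$.

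The hard part will be not any individual region but the bookkeeping at the interfaces between them: the regions are glued by whiskerings with $F_{\Delta}f$, $F_{\Delta}g$ and $F_{\Sigma}v$, and one must verify that these whiskerings are compatible, so that the three separate compositions genuinely paste into the single 2-cell assigned to the composite cartesian 2-cell. This I would handle by repeated use of functoriality of mates, which lets one slide a whiskered mate past an adjacent region, together with the interchange law in $\mathscr{C}$; once the pastings are fully drawn out the required equality is forced exactly as in the span case. In summary, functoriality follows from functoriality of mates, the Beck coherence conditions of Definition \ref{Beckpair}, and the pseudofunctoriality of $F_{\Sigma}$, $F_{\Delta}$ and $F_{\otimes}$.
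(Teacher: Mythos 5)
Your proposal is correct and follows essentially the same route as the paper: the paper likewise verifies functoriality by pasting the images of the two cartesian 2-cells, inserting pseudofunctoriality constraints of $F_{\Delta}$ and their inverses at the interfaces, and then invoking the vertical double pullback condition on the Beck data together with functoriality of mates for the $\alpha$- and $\gamma$-regions, with identity preservation handled as in the span case via a nullary Beck condition. The only (harmless) imprecisions are that for identity 2-cells just one nullary condition is actually relevant (the one for squares whose vertical sides are identities), and that $\alpha$, $\gamma$ and $\mathfrak{b}$ there are canonical isomorphisms built from pseudofunctoriality constraints rather than literal identities unless the pseudofunctors strictly preserve identities.
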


\begin{proof}
The local functor $L_{X,Y}$ sends the components of the composite
\[
\xymatrix{ & E\ar[dl]_{s}\ar[r]^{p}\ar[d]_{f} & B\ar[dr]^{t}\ar[d]^{g}\\
X & M\ar[r]|-{q}\ar[l]|-{u}\ar[d]_{h} & N\ar[r]|-{v}\ar[d]^{k} & Y\\
 & T\ar[r]_{r}\ar[ul]^{m} & S\ar[ru]_{n}
}
\]
to the top and bottom halves of the pasting diagram below:
\[
\xymatrix{ & FE\ar[r]^{F_{\otimes}p}\ar@{}[dr]|-{\Downarrow\mathfrak{b}} & FB\ar[rd]_{\Downarrow\gamma_{1}}^{F_{\Sigma}t}\\
FX\ar[ur]_{\Downarrow\alpha_{1}}^{F_{\Delta}s}\ar[r]|-{F_{\Delta}u}\ar[rd]_{F_{\Delta}m}^{\Downarrow\alpha_{2}} & FM\ar[u]|-{F_{\Delta}f}\ar[r]|-{F_{\otimes}q}\ar@{}[dr]|-{\Downarrow\mathfrak{b}} & FN\ar[u]|-{F_{\Delta}g}\ar[r]|-{F_{\Sigma}v} & FY\\
 & FT\ar[r]_{F_{\otimes}r}\ar[u]|-{F_{\Delta}h} & FS\ar[u]|-{F_{\Delta}k}\ar[ru]_{F_{\Sigma}n}^{\Downarrow\gamma_{2}}
}
\]
To see this is functorial, we insert an unlabeled constraint of $F_{\Delta}$
and its inverse on both the left and right side of the above diagram,
giving the pasting below
\[
\xymatrix{ & FE\ar[r]^{\textnormal{id}} & FE\ar[r]^{\textnormal{id}} & FE\ar[r]^{F_{\otimes}p}\ar@{}[dr]|-{\Downarrow\mathfrak{b}} & FB\ar[r]^{\textnormal{id}} & FB\ar[r]^{\textnormal{id}} & FB\ar[rd]_{\Downarrow\gamma_{1}}^{F_{\Sigma}t}\\
FX\ar[ur]_{\Downarrow\alpha_{1}}^{F_{\Delta}s}\ar[r]|-{F_{\Delta}u}\ar[rd]_{F_{\Delta}m}^{\Downarrow\alpha_{2}} & FM\ar[u]|-{F_{\Delta}f}\ar@{}[r]|-{\cong\;\;} & \;\ar@{}[r]|-{\;\;\cong} & FM\ar[u]|-{F_{\Delta}f}\ar[r]|-{F_{\otimes}q}\ar@{}[dr]|-{\Downarrow\mathfrak{b}} & FN\ar[u]|-{F_{\Delta}g}\ar@{}[r]|-{\cong\;\;} & \;\ar@{}[r]|-{\;\;\cong} & FN\ar[u]|-{F_{\Delta}g}\ar[r]|-{F_{\Sigma}v} & FY\\
 & FT\ar[r]_{\textnormal{id}}\ar[u]|-{F_{\Delta}h} & FT\ar[r]_{\textnormal{id}}\ar[uu]|-{F_{\Delta}hf} & FT\ar[r]_{F_{\otimes}r}\ar[u]|-{F_{\Delta}h} & FS\ar[u]|-{F_{\Delta}k}\ar[r]_{\textnormal{id}} & FS\ar[uu]|-{F_{\Delta}kg}\ar[r]_{\textnormal{id}} & FS\ar[u]|-{F_{\Delta}k}\ar[ru]_{F_{\Sigma}n}^{\Downarrow\gamma_{2}}
}
\]
and then apply the vertical double pullback condition on Beck data
and use functoriality of mates. This shows that the above diagram
is $L_{X,Y}$ applied to the composite. That the identity maps are
preserved is similar to the case of spans, but using the vertical
nullary pullback condition on Beck 2-cells $\mathfrak{b}$.
\end{proof}
As in the case of spans, it will be helpful to recall the reduced
description of an oplax structure on local functors out of the bicategory
of polynomials.
\begin{prop}
\cite{WalkerGeneric}\label{polycoh} Let $\mathcal{E}$ be a locally
cartesian closed category and denote by $\mathbf{Poly}_{c}\left(\mathcal{E}\right)$
the bicategory of polynomials in $\mathcal{E}$ with cartesian 2-cells.
Let $\mathscr{C}$ be a bicategory. Then to give an oplax functor
\[
L\colon\mathbf{Poly}_{c}\left(\mathcal{E}\right)\to\mathscr{C}
\]
 is to give a locally defined functor
\[
L_{X,Y}\colon\mathbf{Poly}_{c}\left(\mathcal{E}\right)_{X,Y}\to\mathscr{C}_{LX,LY},\qquad X,Y\in\mathcal{E}
\]
with comultiplication and counit maps 
\[
\Phi_{s,p_{1},h,p_{2},t}\colon L\left(s,p,t\right)\to L\left(s,p_{1},h\right);L\left(h,p_{2},t\right),\qquad\Lambda_{h}\colon L\left(h,1,h\right)\to1_{LX}
\]
for every respective diagram in $\mathcal{E}$ 
\[
\xymatrix@=1.5em{ &  & E\ar[ld]_{s}\ar[r]^{p_{1}} & T\ar[d]|-{h}\ar[r]^{p_{2}} & B\ar[rd]^{t} &  &  &  &  & T\ar[ld]_{h}\ar[r]^{\textnormal{id}} & T\ar[rd]^{h}\\
 & X &  & Y &  & Z &  &  & X &  &  & X
}
\]
where we assert $p=p_{1};p_{2}$ on the left, such that:
\begin{enumerate}
\item for any morphisms of polynomials as below
\[
\xymatrix@=1.5em{ & R\ar[dl]_{u}\ar[dd]|-{f}\ar[r]^{q} & I\ar[dr]^{v}\ar[dd]|-{g} &  &  & R\ar[dl]_{u}\ar[dd]|-{f}\ar[r]^{q_{1}} & S\ar[dr]^{k}\ar[dd]|-{c} &  &  & S\ar[dl]_{k}\ar[dd]|-{c}\ar[r]^{q_{2}} & I\ar[dr]^{v}\ar[dd]|-{g}\\
X &  &  & Z & X &  &  & Y & Y &  &  & Z\\
 & E\ar[ul]^{s}\ar[r]_{p} & B\ar[ru]_{t} &  &  & E\ar[ul]^{s}\ar[r]_{p_{1}} & T\ar[ru]_{h} &  &  & T\ar[ul]^{h}\ar[r]_{p_{2}} & B\ar[ru]_{t}
}
\]
we have the commuting diagram
\[
\xymatrix{L\left(u,q,v\right)\ar[d]_{L\left(f,g\right)}\myar{\Phi_{u,q_{1},k,q_{2},v}}{rr} &  & L\left(u,q_{1},k\right);L\left(k,q_{2},v\right)\ar[d]^{L\left(f,c\right);L\left(c,g\right)}\\
L\left(s,p,t\right)\myard{\Phi_{s,p_{1},h,p_{2},t}}{rr} &  & L\left(s,p_{1},h\right);L\left(h,p_{2},t\right)
}
\]
\item for any morphism of polynomials as on the left below
\[
\xymatrix@=1.5em{ & R\ar[dl]_{s}\ar[dd]_{f}\ar[r]^{\textnormal{id}}\ar@{}[rdd]|-{\textnormal{pb}} & R\ar[dr]^{s}\ar[dd]^{f} &  &  & L\left(s,1,s\right)\ar[rr]^{L\left(f,f\right)}\ar[rdd]_{\Lambda_{s}} &  & L\left(t,1,t\right)\ar[ldd]^{\Lambda_{t}}\\
X &  &  & Z\\
 & T\ar[ul]^{t}\ar[r]_{\textnormal{id}} & T\ar[ru]_{t} &  &  &  & 1_{LX}
}
\]
the diagram on the right above commutes;
\item for all diagrams of the form
\[
\xymatrix@=1.5em{ & O\ar[dl]_{s}\ar[r]^{a} & G\ar[r]^{b}\ar[d]_{h} & H\ar[r]^{c}\ar[d]^{k} & K\ar[rd]^{t}\\
W &  & X & Y &  & Z
}
\]
in $\mathcal{E}$, we have the commuting diagram
\[
\xymatrix{L\left(s,a;b;c,t\right)\ar[d]_{\Phi_{s,a,h,b;c,t}}\ar@{=}[rr] &  & L\left(s,a;b;c,t\right)\ar[d]^{\Phi_{s,a;b,k,c,t}}\\
L\left(s,a,h\right);L\left(h,b;c,t\right)\myard{L\left(s,a,h\right);\Phi_{h,b,k,c,t}}{d} &  & L\left(s,a;b,k\right);L\left(k,c,t\right)\ar[d]^{\Phi_{s,a,h,b,k};L\left(k,c,t\right)}\\
L\left(s,a,h\right);\left(L\left(h,b,k\right);L\left(k,c,t\right)\right)\myard{\textnormal{assoc}}{rr} &  & \left(L\left(s,a,h\right);L\left(h,b,k\right)\right);L\left(k,c,t\right)
}
\]
\item for all polynomials $\left(s,p,t\right)$ the diagrams
\[
\xymatrix@=0.5em{ & L\left(s,1,s\right);L\left(s,p,t\right)\ar[rdd]^{\Lambda_{s};L\left(s,p,t\right)}\\
\\
L\left(s,p,t\right)\myard{\textnormal{unitor}}{rr}\ar[uur]^{\Phi_{s,1,s,p,t}} &  & 1_{LX};L\left(s,p,t\right)\\
\\
 & L\left(s,p,t\right);L\left(t,1,t\right)\ar[rdd]^{L\left(s,p,t\right);\Lambda_{t}}\\
\\
L\left(s,p,t\right)\myard{\textnormal{unitor}}{rr}\ar[uur]^{\Phi_{s,p,t,1,t}} &  & L\left(s,p,t\right);1_{LY}
}
\]
commute.
\end{enumerate}
\end{prop}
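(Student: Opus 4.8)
The plan is to deduce Proposition~\ref{polycoh} as a direct instance of the general description of oplax functors out of a generic bicategory recalled in Subsection~\ref{generic} (from \cite{WalkerGeneric}). Since that theory already furnishes, for any generic bicategory $\mathscr{A}$, a bijection between coherent oplax constraints $(\varphi,\lambda)$ and coherent comultiplication/counit data $(\Phi,\Lambda)$ indexed over the diagonal 2-cells and the augmentations of $\mathscr{A}$, it suffices to exhibit $\mathbf{Poly}_c\left(\mathcal{E}\right)$ as generic and to identify its diagonals and augmentations explicitly; conditions (1)--(4) will then be precisely the instantiations of the abstract coherence axioms on $(\Phi,\Lambda)$.

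First I would identify the generic morphisms against the composition functor. The composite of two polynomials is characterized by the terminal/universal property of Proposition~\ref{unipolycomp}, and it is exactly this property that supplies the required generic factorisations: any cartesian 2-cell from a polynomial $\left(s,p,t\right)$ into a composite factors uniquely as a canonical diagonal followed by a cartesian pair into the two factors. Concretely, the diagonals are the cartesian 2-cells $\delta\colon\left(s,p,t\right)\to\left(s,p_1,h\right);\left(h,p_2,t\right)$, one for each factorisation $p=p_1;p_2$ of the middle map through an object $T$ together with a chosen leg $h\colon T\to Y$; these index the maps $\Phi_{s,p_1,h,p_2,t}$. The augmentations are the cartesian 2-cells from a polynomial $\left(h,1,h\right)$ into the identity polynomial $1_X$, one for each $h\colon T\to X$; these index the maps $\Lambda_h$. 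Genericity is then checked by verifying that pasting such a diagonal with the unique induced cartesian pair recovers the identity 2-cell, which is immediate from the terminal object property in Proposition~\ref{unipolycomp}.

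With the diagonals and augmentations pinned down, the bijection of Subsection~\ref{generic} applies verbatim, and reading off the abstract coherence conditions yields (1)--(4): naturality of the comultiplication family with respect to cartesian morphisms of polynomials gives condition~(1); naturality of the counit family gives condition~(2); coassociativity of the diagonals (a triple factorisation $a;b;c$ split in two compatible ways) gives the associativity condition~(3); and the compatibility of diagonals with augmentations at the two ends gives the unit conditions~(4).

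The main obstacle will be establishing genericity itself, i.e.\ verifying that the cartesian 2-cells $\delta$ above really are the generic morphisms against the polynomial composition functor; this is where the complicated composite \eqref{polycompdiagram} enters and must be reconciled with the universal property of Proposition~\ref{unipolycomp}. Once the indexing is matched correctly, the translation of (1)--(4) is purely formal. In particular, the associativity condition~(3)---which in a direct approach would force one to confront composites of distributivity pullbacks---becomes merely the coassociativity of diagonals and so requires no explicit coherence computation, which is exactly the simplification this method is designed to provide.
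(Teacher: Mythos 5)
Your proposal is correct and takes essentially the same route as the paper: Proposition~\ref{polycoh} is stated there as a citation to \cite{WalkerGeneric}, and its proof is exactly the specialization you describe of the generic-bicategory machinery recalled in Subsection~\ref{generic} to $\mathbf{Poly}_{c}\left(\mathcal{E}\right)$, with genericity supplied by the terminal-object property of Proposition~\ref{unipolycomp} and the diagonals $\delta\colon\left(s,p,t\right)\to\left(s,p_{1},h\right);\left(h,p_{2},t\right)$ and augmentations $\left(h,1,h\right)\to 1_{X}$ indexed precisely as you indicate. Your reading of conditions (1)--(4) as the naturality, coassociativity and counit axioms on $\left(\Phi,\Lambda\right)$ is likewise the intended one.
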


We now prove that the locally defined functor $L$ above may be endowed
with an oplax structure.
\begin{lem}
\label{polycartLoplax} Let $\mathcal{E}$ be a locally cartesian
closed category seen as a locally discrete 2-category, and let $\mathscr{C}$
be a bicategory. Suppose we are given a lax Beck triple 
\[
F_{\Sigma}\colon\mathcal{E}\to\mathscr{C},\qquad F_{\Delta}\colon\mathcal{E}^{\textnormal{op}}\to\mathscr{C},\qquad F_{\otimes}\colon\mathcal{E}\to\mathscr{C}
\]
with Beck 2-cells $\mathfrak{b}$. Then the locally defined functor
\[
L_{X,Y}\colon\mathbf{Poly}_{c}\left(\mathcal{E}\right)_{X,Y}\to\mathscr{C}_{LX,LY},\qquad X,Y\in\mathcal{E}
\]
as in Lemma \ref{polyclocally} canonically admits the structure of
an oplax functor.
\end{lem}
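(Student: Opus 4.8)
The plan is to invoke Proposition \ref{polycoh}, which reduces equipping the locally defined functor $L$ of Lemma \ref{polyclocally} with an oplax structure to supplying comultiplication maps $\Phi_{s,p_1,h,p_2,t}$ and counit maps $\Lambda_h$ subject only to the four listed conditions. This is exactly the device that lets us sidestep the full oplax associativity coherence \eqref{assocdiag}: in particular the coherences involving composites of distributivity pullbacks never appear, since the reduced associativity axiom (3) of Proposition \ref{polycoh} refers only to the splittings of a single path $a;b;c$.

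Following the span case of Proposition \ref{spanLoplax}, I define $\Phi_{s,p_1,h,p_2,t}$ to be the pasting which whiskers the unit $\eta_{Fh}$ of the adjunction $F_\Sigma h\dashv F_\Delta h$ in between the two halves of $L(s,p,t)=F_\Sigma t\cdot F_\otimes p\cdot F_\Delta s$, after first applying the pseudofunctoriality constraint of $F_\otimes$ at $p=p_1;p_2$ to split $F_\otimes p\cong F_\otimes p_2\cdot F_\otimes p_1$; this is the direct analogue of \eqref{spanlax}. I define $\Lambda_h\colon L(h,1,h)\to 1_{LX}$ as the composite $F_\Sigma h\cdot F_\otimes 1\cdot F_\Delta h\to F_\Sigma h\cdot F_\Delta h\to 1_{LX}$ using the nullary constraint of $F_\otimes$ followed by the counit $\epsilon_{Fh}$. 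With these definitions the associativity axiom (3) is immediate: inserting the units $\eta_{Fh}$ and $\eta_{Fk}$ at the two marked points of a path $a;b;c$ may be performed in either order to the same effect, while the two resulting splittings of $F_\otimes(a;b;c)$ agree by the associativity coherence of the pseudofunctor $F_\otimes$. The unitary axioms (4) follow from the triangle identities together with the unit coherence of $F_\otimes$, and the counit compatibility axiom (2) is a functoriality-of-mates calculation as in the span case.

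The substantive step is the naturality axiom (1): for a cartesian morphism of polynomials factored into pieces $(f,c)$ and $(c,g)$ one must show that $\Phi$ commutes with the action of $L$ on morphisms. Unlike in spans, the vertical legs of this square are the pastings of Lemma \ref{polyclocally}, which involve not only the mates $\alpha,\gamma$ but also the Beck $2$-cells $\mathfrak{b}$; so the verification is a functoriality-of-mates calculation in which the $F_\otimes$ and $F_\Delta$ portions of the diagram must be reconciled using the horizontal and vertical double pullback conditions on the Beck data from Definition \ref{Beckpair}. Concretely, I would expand both legs as pastings of Beck $2$-cells indexed by the relevant pullback squares, exactly as in the naturality argument carried out in the proof of Theorem \ref{unispanisothm}, and then the two pastings agree because the underlying pullbacks compose to the same pullback. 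I expect this reconciliation of the Beck data to be the main obstacle; the unit insertion governing the comultiplication interacts only through functoriality of mates and so contributes nothing beyond the span calculation of Proposition \ref{spanLoplax}.
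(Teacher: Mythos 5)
Your proposal is correct and follows essentially the same route as the paper: invoke Proposition \ref{polycoh}, define $\Phi_{s,p_1,h,p_2,t}$ by inserting the unit $\eta_{Fh}$ after splitting $F_\otimes p$ via the pseudofunctoriality constraint of $F_\otimes$, define $\Lambda_h$ from the nullary constraint of $F_\otimes$ and the counit $\epsilon_{Fh}$, and dispose of associativity and the unit axioms by coherence of $F_\otimes$ and the triangle identities, leaving naturality as the substantive check settled by functoriality of mates together with the Beck-data coherence conditions (the paper uses the horizontal binary axiom for the comultiplications and, a point you gloss over slightly, the horizontal nullary axiom for the counits).
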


\begin{proof}
By Proposition \ref{polycoh}, to equip the locally defined functor
$L$ with an oplax structure is to give comultiplication maps $\Phi_{s,p_{1},h,p_{2},t}\colon L\left(s,p,t\right)\to L\left(s,p_{1},h\right);L\left(h,p_{2},t\right)$
and counit maps $\Lambda_{h}\colon L\left(h,1,h\right)\to1_{LX}$
for all diagrams of the respective forms, where $p=p_{2}p_{1}$,
\[
\xymatrix@=1.5em{ & E\ar[ld]_{s}\ar[r]^{p_{1}} & T\ar[d]|-{h}\ar[r]^{p_{2}} & B\ar[rd]^{t} &  &  &  &  & T\ar[ld]_{h}\ar[r]^{\textnormal{id}} & T\ar[rd]^{h}\\
X &  & Y &  & Z &  &  & X &  &  & X
}
\]
satisfying naturality, associativity, and unitary conditions. To do
this, we take each $\Phi_{s,p_{1},h,p_{2},t}$ to be the pasting
\[
\xymatrix{ &  &  & \mbox{\;}\\
 &  &  & \;\ar@{}[u]|-{\cong}\\
FX\ar[r]^{F_{\Delta}s} & FE\ar[r]^{F_{\otimes}p_{1}}\ar@/^{4.8pc}/[rrrr]^{F_{\otimes}p} & FT\ar[r]^{F_{\Sigma}h}\ar@/^{2.9pc}/[rr]|-{1_{FT}} & FY\ar[r]^{F_{\Delta}h}\ar@{}[u]|-{\Downarrow\eta_{Fh}} & FT\ar[r]^{F_{\otimes}p_{2}} & FB\ar[r]^{F_{\Sigma}t} & FZ
}
\]
and each $\Lambda_{h}$ to be the pasting
\[
\xymatrix{ & FT\ar@/^{0.8pc}/[r]^{F_{\otimes}1_{T}}\ar@/_{0.8pc}/[r]|-{1_{FT}}\ar@{}[r]|-{\cong} & FT\ar[rd]^{F_{\Sigma}h} & \;\\
FX\ar[rrr]|-{1_{FX}}\ar[ur]^{F_{\Delta}h} & \ar@{}[ur]|-{\overset{\;}{\Downarrow\epsilon_{Fh}}} &  & FX
}
\]
Associativity of comultiplication is almost trivial; indeed, given
a diagram of the form
\[
\xymatrix@=1.5em{ & O\ar[dl]_{s}\ar[r]^{a} & G\ar[r]^{b}\ar[d]_{h} & H\ar[r]^{c}\ar[d]^{k} & K\ar[rd]^{t}\\
W &  & X & Y &  & Z
}
\]
both paths in the associativity of comultiplication condition compose
to
\[
\xymatrix@=1.5em{\\
 &  &  & \; & \ar@{}[r]|-{\cong} & \; & \;\\
FW\ar[r]^{F_{\Delta}s} & FO\ar[r]^{F_{\otimes}a}\ar@/^{4pc}/[rrrrrrr]^{F_{\otimes}cba} & FG\ar[r]^{F_{\Sigma}h}\ar@/^{2pc}/[rr]|-{1_{FG}} & FX\ar[r]^{F_{\Delta}h}\ar@{}[u]|-{\Downarrow\eta_{Fh}} & FG\ar[r]^{F_{\otimes}b} & FH\ar[r]^{F_{\Sigma}k}\ar@/^{2pc}/[rr]|-{1_{FH}} & FY\ar[r]^{F_{\Delta}k}\ar@{}[u]|-{\Downarrow\eta_{Fk}} & FH\ar[r]^{F_{\otimes}c} & FK\ar[r]^{F_{\Sigma}t} & FZ
}
\]
by associativity of the constraints of $F_{\otimes}$. The unitary
axioms are also almost trivial, a consequence of the triangle identities
for an adjunction and the unitary axioms on $F_{\otimes}$.

For the naturality condition, suppose we are given a triple of cartesian
morphisms of polynomials
\[
\xymatrix@=1.5em{ & R\ar[dl]_{u}\ar[dd]|-{f}\ar[r]^{q} & I\ar[dr]^{v}\ar[dd]|-{g} &  &  & R\ar[dl]_{u}\ar[dd]|-{f}\ar[r]^{q_{1}} & S\ar[dr]^{k}\ar[dd]|-{c} &  &  & S\ar[dl]_{k}\ar[dd]|-{c}\ar[r]^{q_{2}} & I\ar[dr]^{v}\ar[dd]|-{g}\\
X &  &  & Z & X &  &  & Y & Y &  &  & Z\\
 & E\ar[ul]^{s}\ar[r]_{p} & B\ar[ru]_{t} &  &  & E\ar[ul]^{s}\ar[r]_{p_{1}} & T\ar[ru]_{h} &  &  & T\ar[ul]^{h}\ar[r]_{p_{2}} & B\ar[ru]_{t}
}
\]
and consider the diagram
\[
\xymatrix{L\left(u,q,v\right)\ar[d]_{L\left(f,g\right)}\myar{\Phi_{u,q_{1},k,q_{2},v}}{rr} &  & L\left(u,q_{1},k\right);L\left(k,q_{2},v\right)\ar[d]^{L\left(f,c\right);L\left(c,g\right)}\\
L\left(s,p,t\right)\myard{\Phi_{s,p_{1},h,p_{2},t}}{rr} &  & L\left(s,p_{1},h\right);L\left(h,p_{2},t\right)
}
\]
Now the top composite is
\[
\xymatrix{ &  &  & \;\ar@{}[d]|-{\cong}\\
 & FR\ar[r]|-{F_{\otimes}q_{1}}\ar@/^{2.5pc}/[rrrr]|-{F_{\otimes}q} & FS\ar[rr]|-{1_{FR}}\ar[rd]|-{F_{\Sigma}k} & \;\ar@{}[d]|-{\Downarrow\eta_{Fk}} & FS\ar[r]|-{F_{\otimes}q_{2}} & FI\ar[rd]|-{F_{\Sigma}v}\\
FX\ar[ur]|-{F_{\Delta}u}\ar[rd]|-{F_{\Delta}s}\ar@{}[r]|-{\Downarrow} & \;\ar@{}[r]|-{\Downarrow\mathfrak{b}} & \ar@{}[r]|-{\Downarrow} & FY\ar[rd]|-{F_{\Delta}h}\ar@{}[r]|-{\Downarrow}\ar[ur]|-{F_{\Delta}k} & \;\ar@{}[r]|-{\Downarrow\mathfrak{b}} & \;\ar@{}[r]|-{\Downarrow} & FZ\\
 & FE\ar[r]|-{F_{\otimes}p_{1}}\ar[uu]|-{F_{\Delta}f} & FT\ar[ur]|-{F_{\Sigma}h}\ar[uu]|-{F_{\Delta}c} &  & FT\ar[uu]|-{F_{\Delta}c}\ar[r]|-{F_{\otimes}p_{2}} & FT\ar[ur]|-{F_{\Sigma}t}\ar[uu]|-{F_{\Delta}g}
}
\]
where the unlabeled 2-cells are as in Lemma \ref{polyclocally}, and
one may rewrite the pasting of the three middle triangles above as
an ``identity square'' and pasting with $\eta_{Fh}$. It follows
that this is equal to the bottom composite given by the pasting 

\[
\xymatrix@=1em{ &  & FR\ar[rrrr]|-{F_{\otimes}q} &  & \ar@{}[dddd]|-{\Downarrow\mathfrak{b}} &  & FI\ar[drdr]|-{F_{\Sigma}v}\\
\\
FX\ar[rdrd]|-{F_{\Delta}s}\ar[ruru]|-{F_{\Delta}u}\ar@{}[rr]|-{\Downarrow} &  &  &  &  &  & \;\ar@{}[rr]|-{\Downarrow} &  & FZ\\
\\
 &  & FE\ar[rd]|-{F_{\otimes}p_{1}}\ar[rrrr]|-{F_{\otimes}p}\ar[uuuu]|-{F_{\Delta}f} &  & \;\ar@{}[d]|-{\cong} &  & FB\ar[uuuu]|-{F_{\Delta}g}\ar[uurr]|-{F_{\Sigma}t}\\
 &  &  & FT\ar[rr]|-{1_{FT}}\ar[dr]|-{F_{\Sigma}h} & \ar@{}[d]|-{\Downarrow\eta_{Fh}} & FT\ar[ur]|-{F_{\otimes}p_{2}}\\
 &  &  &  & FY\ar[ur]|-{F_{\Delta}h}
}
\]
using the horizontal binary axiom on elements of $\mathfrak{b}$;
thus showing naturality of comultiplication. Naturality of counits
is similar to the case of spans (except that one must use the horizontal
nullary axiom on elements of $\mathfrak{b}$) and so will be omitted.
\end{proof}
It will be useful to have a description of the oplax constraint cells
$\varphi$ corresponding to our comultiplication maps $\Phi$. This
is described by the following lemma.
\begin{lem}
\label{oplaxBeckdist} Let the oplax functor $L\colon\mathbf{Poly}_{c}\left(\mathcal{E}\right)\to\mathscr{C}$
be constructed as in Proposition \ref{polycartLoplax}. Then the binary
oplax constraint cell on $L$ at a composite of polynomials constructed
as below
\begin{equation}
\xymatrix@=1.5em{ &  & H\ar[r]^{p_{1}}\ar[ld]_{w}\ar@{}[d]|-{\textnormal{pb}} & M\ar[ld]^{x}\ar[rd]_{y}\ar[r]^{p_{2}} & K\ar[rd]^{z}\ar@{}[d]|-{\textnormal{pb}}\\
 & A\ar[ld]_{a}\ar[r]^{m} & B\ar[rd]^{b} &  & C\ar[ld]_{c}\ar[r]^{n} & D\ar[rd]^{d}\\
X &  &  & Y &  &  & Z
}
\label{polycomp}
\end{equation}
is given by the pasting
\begin{equation}
\xymatrix{ &  &  & \;\ar@{}[d]|-{\cong}\\
 & FH\ar[r]|-{F_{\otimes}p_{1}}\ar@/^{2.5pc}/[rrrr]|-{F_{\otimes}p} & FM\ar[rr]|-{1_{FM}}\ar[rd]|-{F_{\Sigma}h} & \;\ar@{}[d]|-{\Downarrow\eta_{Fh}} & FM\ar[r]|-{F_{\otimes}p_{2}} & FK\ar[rd]|-{F_{\Sigma}dz}\\
FX\ar[ur]|-{F_{\Delta}aw}\ar[rd]|-{F_{\Delta}a}\ar@{}[r]|-{\Downarrow} & \;\ar@{}[r]|-{\Downarrow\mathfrak{b}} & \ar@{}[r]|-{\Downarrow} & FY\ar[rd]|-{F_{\Delta}c}\ar@{}[r]|-{\Downarrow}\ar[ur]|-{F_{\Delta}h} & \;\ar@{}[r]|-{\Downarrow\mathfrak{b}} & \;\ar@{}[r]|-{\Downarrow} & FZ\\
 & FA\ar[r]|-{F_{\otimes}m}\ar[uu]|-{F_{\Delta}w} & FB\ar[ur]|-{F_{\Sigma}b}\ar[uu]|-{F_{\Delta}x} &  & FC\ar[uu]|-{F_{\Delta}y}\ar[r]|-{F_{\otimes}n} & FD\ar[uu]|-{F_{\Delta}z}\ar[ur]|-{F_{\Sigma}d}
}
\label{polycompconstraint}
\end{equation}
where $p=p_{2}p_{1}$ and $h=bx=cy$.
\end{lem}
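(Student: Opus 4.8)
The plan is to apply the recipe of Subsection \ref{generic}, which recovers the binary oplax constraint $\varphi$ at a composite as the comultiplication at a diagonal, post-composed with the cartesian morphisms realising that diagonal. The composite polynomial \eqref{polycomp} arises, by Proposition \ref{unipolycomp}, as a terminal object, and its two pullback squares supply cartesian morphisms of polynomials
\[
s_1\colon\left(aw,p_1,h\right)\to\left(a,m,b\right),\qquad s_2\colon\left(h,p_2,dz\right)\to\left(c,n,d\right)
\]
together with a diagonal $\delta$ factoring the identity on the composite through $\left(aw,p_1,h\right);\left(h,p_2,dz\right)$, where $h=bx=cy$. By the recipe of Subsection \ref{generic} we then have $\varphi_{\left(a,m,b\right),\left(c,n,d\right)}=\left(Ls_1;Ls_2\right)\circ\Phi_{\delta}$, so it remains only to expand the right-hand side and identify it with \eqref{polycompconstraint}.

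First I would substitute the definition of $\Phi_{\delta}$ from Lemma \ref{polycartLoplax}: with source leg $aw$, target leg $dz$, and middle object mapping to $Y$ by $h$, this contributes exactly the unit $\eta_{Fh}$ inserted on the central $1_{FM}$ and the pseudofunctoriality isomorphism $F_{\otimes}p\cong F_{\otimes}p_2\cdot F_{\otimes}p_1$ appearing at the top of \eqref{polycompconstraint}. Next I would substitute $Ls_1$ and $Ls_2$ as defined in Lemma \ref{polyclocally}. Applied to $s_1$, whose middle square is the left pullback of \eqref{polycomp} with vertical legs $w$ and $x$, this yields the left Beck cell $\mathfrak{b}$ together with the mate $\alpha_1$ relating $F_{\Delta}\left(aw\right)$ to $F_{\Delta}w\cdot F_{\Delta}a$ and the mate $\gamma_1\colon F_{\Sigma}h\cdot F_{\Delta}x\to F_{\Sigma}b$; applied to $s_2$, whose middle square is the right pullback with vertical legs $y$ and $z$, it yields the right Beck cell $\mathfrak{b}$ together with the mate $\alpha_2\colon F_{\Delta}h\to F_{\Delta}y\cdot F_{\Delta}c$ and the mate $\gamma_2$ relating $F_{\Sigma}\left(dz\right)$ to $F_{\Sigma}d$.

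Pasting these together, the two Beck cells land at the two squares of \eqref{polycompconstraint}, the unit $\eta_{Fh}$ and the constraint $F_{\otimes}p\cong F_{\otimes}p_2\cdot F_{\otimes}p_1$ occupy the centre and top, and the four mates $\alpha_1,\gamma_1,\alpha_2,\gamma_2$ are precisely the four unlabelled $2$-cells of \eqref{polycompconstraint}; the two pastings therefore coincide. The only genuine work is this final matching, which — exactly as in the span case of Lemma \ref{oplaxBeck} — is a bookkeeping exercise in functoriality of mates. The main obstacle I anticipate is checking the compatibility of the inserted unit with the two surrounding mates: one must verify, via the triangle identities, that the $F_{\Sigma}h$ half of $\eta_{Fh}$ pairs correctly with $\gamma_1$ across $F_{\Delta}x$ and the $F_{\Delta}h$ half with $\alpha_2$ across $F_{\Delta}y$, so that no further constraint data survives and the expanded pasting is literally \eqref{polycompconstraint}. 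Everything else is forced by the definitions of $\Phi$ and $L$.
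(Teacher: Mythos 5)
Your proposal follows the paper's own proof essentially verbatim: identify the diagonal $\delta$ and the cartesian morphisms $\left(w,x\right)$, $\left(y,z\right)$ whose composite with $\delta$ is the identity on $\left(a,m,b\right);\left(c,n,d\right)$, invoke the recipe of Subsection \ref{generic} to write $\varphi_{\left(a,m,b\right),\left(c,n,d\right)}=\left(L\left(w,x\right);L\left(y,z\right)\right)\circ\Phi_{\delta}$, and expand via Lemmas \ref{polyclocally} and \ref{polycartLoplax}. The one point where you over-complicate is the anticipated "obstacle": no triangle-identity check is needed, since the expanded composite is \emph{literally} the pasting \eqref{polycompconstraint} — its regions are exactly the cells $\cong$, $\eta_{Fh}$, the two Beck cells $\mathfrak{b}$, and the four mates, composed as drawn — which is why the paper's proof ends immediately with "it follows that".
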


\begin{proof}
Given composable polynomials $\left(a,m,b\right)$ and $\left(c,n,d\right)$
the composite is given by the terminal diagram as in \eqref{polycomp}
. We then have an induced diagonal
\[
\delta_{ac',h,db'}\colon\left(aw,p,dz\right)\to\left(aw,p_{1},h\right);\left(h,p_{2},dz\right)
\]
and morphisms $\left(w,x\right)\colon\left(aw,p_{1},h\right)\to\left(a,m,b\right)$
and $\left(y,z\right)\colon\left(h,p_{2},dz\right)\to\left(c,n,d\right)$
for which
\[
\xymatrix{\left(a,m,b\right);\left(c,n,d\right)\myar{\delta_{ac',h,db'}}{r} & \left(aw,p_{1},h\right);\left(h,p_{2},dz\right)\myar{\left(w,x\right);\left(y,z\right)}{r} & \left(a,m,b\right);\left(c,n,d\right)}
\]
is the identity on $\left(a,m,b\right);\left(c,n,d\right)$. It follows
that the binary oplax constraint cell corresponding to the comultiplication
maps $\Phi$, namely
\[
\varphi_{\left(a,m,b\right),\left(c,n,d\right)}\colon L\left(\left(a,m,b\right);\left(c,n,d\right)\right)\to L\left(a,m,b\right);L\left(c,n,d\right)
\]
is given by \eqref{polycompconstraint}.
\end{proof}
We now know enough for a complete proof of the universal properties
of the polynomials with cartesian 2-cells.
\begin{proof}
[Proof of Theorem \ref{unipolycartthm}] We consider the assignment
\[
\varUpsilon\colon\mathbf{Greg}_{\otimes}\left(\mathbf{Poly}_{c}\left(\mathcal{E}\right),\mathscr{C}\right)\to\mathbf{LaxBeckTriple}\left(\mathcal{E},\mathscr{C}\right)
\]
of Theorem \ref{unipolycartthm}, i.e. given a gregarious functor
$F\colon\mathbf{Poly}_{c}\left(\mathcal{E}\right)\to\mathscr{C}$
which restricts to a pseudofunctor when composed with $\left(-\right)_{\otimes}$,
we extract, via Theorem \ref{unispanisothm}, the pseudofunctors $F_{\Sigma}$,
$F_{\Delta}$ and $F_{\otimes}$ equipped with the Beck data $\mathfrak{b}$.
This data defines a lax Beck triple $\mathcal{E}\to\mathscr{C}$.

\emph{\noun{Well defined.}} Given an icon $\alpha\colon F\Rightarrow G\colon\mathbf{Poly}_{c}\left(\mathcal{E}\right)\to\mathscr{C}$
we know $\alpha_{\Delta}\colon F_{\Delta}\Rightarrow G_{\Delta}$
is invertible, as it is a restriction of an icon $\alpha_{\Sigma\Delta}\colon F_{\Sigma\Delta}\Rightarrow G_{\Sigma\Delta}\colon\mathbf{Span}\left(\mathcal{E}\right)\to\mathscr{C}$
which is necessarily invertible by Lemma \ref{spaniconinv}.

We start by proving the first universal property.

\noun{Fully faithful.} That the assignment $\alpha\mapsto\left(\alpha_{\Delta},\alpha_{\otimes}\right)$
is bijective follows from the fact the assignment $\alpha_{\Sigma\Delta}\mapsto\alpha_{\Delta}$
is bijective, and the necessary commutativity of 
\begin{equation}
\xymatrix{F\left(s,p,t\right)\myar{\varphi}{r}\ar[d]_{\alpha_{\left(s,p,t\right)}} & F\left(s,1,1\right);F\left(1,p,1\right);F\left(1,1,t\right)\ar[d]^{\alpha_{\left(s,1,1\right)};\alpha_{\left(1,p,1\right)};\alpha_{\left(1,1,t\right)}}\\
G\left(s,p,t\right)\myard{\psi}{r} & G\left(s,1,1\right);G\left(1,p,1\right);G\left(1,1,t\right)
}
\label{polydiagram}
\end{equation}
where $\varphi$ and $\psi$ must be invertible constraints since
$F$ and $G$ are gregarious.

Again, that $\left(\alpha_{\Sigma_{f}}^{-1}\right)^{*}=\alpha_{\Delta_{f}}$
is forced by Lemma \ref{mateinv} and one need only check that any
collection
\[
\alpha_{s,p,t}\colon F\left(s,p,t\right)\to G\left(s,p,t\right)
\]
 satisfying this property and \eqref{polydiagram} necessarily defines
an icon. 

We omit the calculation showing $\alpha$ is locally natural. Indeed,
this calculation is almost the same as in the proof of Theorem \ref{unispansthm},
except we must interchange a Beck 2-cell with the components $\alpha_{\Delta}$
and $\alpha_{\otimes}$ using the condition \eqref{polyBeckcoh}.

To see that such an $\alpha$ then defines an icon, note that each
$\Phi_{s,p_{1},h,p_{2},t}$ may be decomposed as the commuting diagram
\[
\xymatrix{F\left(s,p,t\right)\myar{\Phi_{s,p_{1},h,p_{2},t}}{rrr}\ar[d]_{\Phi_{s,p_{1},1,p_{2},t}} &  &  & F\left(s,p_{1},h\right);F\left(h,p_{2},t\right)\\
F\left(s,p_{1},1\right);F\left(1,p_{2},t\right)\myard{F\left(s,p_{1},1\right);\Phi_{1,1,h,1,1};F\left(1,p_{2},t\right)}{rrr} &  &  & F\left(s,p_{1},1\right);F\left(1,1,h\right);F\left(h,1,1\right);F\left(1,p_{2},t\right)\ar[u]_{\Phi_{s,p_{1},1,1,h}^{-1};\Phi_{h,1,1,p_{2},t}^{-1}}
}
\]
and so the commutativity of the diagram
\[
\xymatrix{F\left(s,p,t\right)\myar{\Phi_{s,p_{1},h,p_{2},t}}{rr}\ar[d]_{\alpha_{s,p,t}} &  & F\left(s,p_{1},h\right);F\left(h,p_{2},t\right)\ar[d]^{\alpha_{s,p_{1},h};\alpha_{h,p_{2},t}}\\
G\left(s,p,t\right)\myard{\Psi_{s,p_{1},h,p_{2},t}}{rr} &  & G\left(s,p_{1},h\right);G\left(h,p_{2},t\right)
}
\]
amounts to checking that the pastings
\[
\xymatrix{ &  &  & \ar@{}[d]|-{\Downarrow\varphi}\\
 &  &  & \ar@{}[d]|-{\Downarrow\eta_{Gh}}\\
\bullet\ar@/_{0.5pc}/[r]_{G_{\Delta}s}\ar@/^{0.5pc}/[r]^{F_{\Delta}s}\ar@{}[r]|-{\Downarrow} & \bullet\ar@/_{0.5pc}/[r]_{G_{\otimes}p_{1}}\ar@/^{0.5pc}/[r]^{F_{\otimes}p_{1}}\ar@{}[r]|-{\Downarrow}\ar@/^{4.5pc}/[rrrr]^{F_{\otimes}p} & \bullet\ar@/_{0.5pc}/[r]_{G_{\Sigma}h}\ar@/^{0.5pc}/[r]^{F_{\Sigma}h}\ar@{}[r]|-{\Downarrow}\ar@/^{2.5pc}/[rr]^{\textnormal{id}} & \bullet\ar@/_{0.5pc}/[r]_{G_{\Delta}h}\ar@/^{0.5pc}/[r]^{F_{\Delta}h}\ar@{}[r]|-{\Downarrow} & \bullet\ar@/_{0.5pc}/[r]_{G_{\otimes}p_{2}}\ar@/^{0.5pc}/[r]^{F_{\otimes}p_{2}}\ar@{}[r]|-{\Downarrow} & \bullet\ar@/_{0.5pc}/[r]_{G_{\Sigma}t}\ar@/^{0.5pc}/[r]^{F_{\Sigma}t}\ar@{}[r]|-{\Downarrow} & \bullet
}
\]
and
\[
\xymatrix{ &  &  & \ar@{}[d]|-{\Downarrow\psi}\ar@{}[]|-{\Downarrow\alpha_{\otimes}}\\
 &  &  & \ar@{}[d]|-{\Downarrow\eta_{Gh}}\\
\bullet\ar@/_{0.5pc}/[r]_{G_{\Delta}s}\ar@/^{0.5pc}/[r]^{F_{\Delta}s}\ar@{}[r]|-{\Downarrow} & \bullet\ar[r]_{G_{\otimes}p_{1}}\ar@/^{4.5pc}/[rrrr]|-{G_{\otimes}p}\ar@/^{6pc}/[rrrr]|-{F_{\otimes}p} & \bullet\ar@/_{0pc}/[r]_{G_{\Sigma}h}\ar@/^{2.5pc}/[rr]^{\textnormal{id}} & \bullet\ar@/_{0pc}/[r]_{G_{\Delta}h} & \bullet\ar[r]_{G_{\otimes}p_{2}} & \bullet\ar@/_{0.5pc}/[r]_{G_{\Sigma}t}\ar@/^{0.5pc}/[r]^{F_{\Sigma}t}\ar@{}[r]|-{\Downarrow} & \bullet
}
\]
agree. This is almost the same calculation as in spans except here
we must use that $\alpha_{\otimes}$ is an icon.

\noun{Essentially surjective}. Suppose we are given a lax Beck triple
$\left(F_{\Sigma},F_{\Delta},F_{\otimes},\mathfrak{b}\right)$. Then
by Proposition \ref{polycartLoplax}, we get a normal oplax functor
$F\colon\mathbf{Poly}_{c}\left(\mathcal{E}\right)\to\mathscr{C}$
(which is gregarious as a consequence Proposition \ref{polyadjprop},
and clearly restricts to a pseudofunctor on $\otimes$), and this
constructed $F$ clearly restricts to the same lax Beck triple when
$\varUpsilon$ is applied.

We now prove the remaining two universal properties, seen as restrictions
of the first.

\noun{Restrictions}. It is clear that for any $\textnormal{Greg}_{\otimes}$
functor $F\colon\mathbf{Poly}_{c}\left(\mathcal{E}\right)\to\mathscr{C}$
we may write $F\cong\widetilde{F}$ where $\widetilde{F}$ is given
by sending $F$ to its lax Beck triple and recovering a map $\widetilde{F}\colon\mathbf{Poly}_{c}\left(\mathcal{E}\right)\to\mathscr{C}$
under the above equivalence.

Also it is clear that $F$ (or equivalently $\widetilde{F}$) restricts
to pseudofunctors on $\Sigma\Delta$ and $\Delta\otimes$ precisely
when this lax Beck triple is a Beck triple. This is seen by using
the general expression for an oplax constraint cell \eqref{polycompconstraint}
on composites of polynomials $\left(s,1,t\right);\left(u,1,v\right)$
and $\left(s,t,1\right);\left(u,v,1\right)$.

Now as each oplax constraint cell may be constructed from ``Beck
composites'' as above and ``distributivity composites'' of the
form $\left(1,1,u\right);\left(1,f,1\right)$ (by the proof of \cite[Prop. 1.12]{gambinokock}),
it follows that asking $F$ be pseudo corresponds to asking that,
in addition, the oplax constraint cells for composites $\left(1,1,u\right);\left(1,f,1\right)$
be invertible. But this is precisely the $\Sigma\otimes$ distributivity
condition.
\end{proof}

\section{Universal properties of polynomials with general 2-cells\label{unipoly}}

In this section we prove the universal property of the bicategory
of polynomials with general 2-cells, denoted $\mathbf{Poly}\left(\mathcal{E}\right)$.
As this bicategory is not generic, the methods of the previous section
do not directly apply. However, as composition in $\mathbf{Poly}_{c}\left(\mathcal{E}\right)$
and $\mathbf{Poly}\left(\mathcal{E}\right)$ is the same we can still
apply some results of the previous section to help prove this universal
property.

\subsection{Stating the universal property}

The universal property of $\mathbf{Poly}\left(\mathcal{E}\right)$
ends up being simpler to state than that of $\mathbf{Poly}_{c}\left(\mathcal{E}\right)$
due to the existence of more adjunctions. To state this property we
will first require a strengthening of the notions of ``sinister''
and ``Beck'' pseudofunctors as described in Definition \ref{defSinBeck}.
For the following definition, recall that the categories of such pseudofunctors
and invertible icons are denoted $\mathbf{Sin}\left(\mathcal{E},\mathscr{C}\right)$
and $\mathbf{Beck}\left(\mathcal{E},\mathscr{C}\right)$ respectively.
\begin{defn}
Let $\mathcal{E}$ be a category with pullbacks, and let $\mathscr{C}$
be a bicategory. We denote by $\mathbf{2Sin}\left(\mathcal{E},\mathscr{C}\right)$
the subcategory of $\mathbf{Sin}\left(\mathcal{E},\mathscr{C}\right)$
consisting of pseudofunctors $F\colon\mathcal{E}\to\mathscr{C}$ for
which $Ff$ has two successive right adjoints for every morphism $f\in\mathcal{E}$.
We denote by $\mathbf{2Beck}\left(\mathcal{E},\mathscr{C}\right)$
the subcategory of $\mathbf{2Sin}\left(\mathcal{E},\mathscr{C}\right)$
consisting of those pseudofunctors which in addition satisfy the Beck
condition. 
\end{defn}

\begin{rem}
The above Beck condition is on the pair $F_{\Sigma}$-$F_{\Delta}$,
but one could also ask a Beck condition on the pair $F_{\Delta}$-$F_{\Pi}$.
The reason for not using the latter is that the Beck 2-cells (arising
from adjunctions $F_{\Delta}f\dashv F_{\Pi}f$) are not in the direction
required for constructing a lax Beck triple, and are invertible if
and only if the former Beck 2-cells are invertible.
\end{rem}

The following lemma will be needed to describe a distributivity condition
which may be imposed on such pseudofunctors.
\begin{lem}
\label{recoverdistpair} Let $\mathcal{E}$ be a locally cartesian
closed category seen as a locally discrete 2-category, and let $\mathscr{C}$
be a bicategory. Suppose $F\colon\mathcal{E}\to\mathscr{C}$ is a
given 2-Beck pseudofunctor, and for each morphism $f\in\mathcal{E}$
define $F_{\Sigma}f:=Ff$, take $F_{\Delta}f$ to be a chosen right
adjoint of $Ff$ (choosing $F_{\Delta}$ to strictly preserve identities),
and take $F_{\Pi}f$ to be a chosen right adjoint of $F_{\Delta}f$
(again choosing $F_{\Pi}$ to strictly preserve identities). We may
then define a Beck triple with underlying pseudofunctors 
\[
F_{\Sigma}\colon\mathcal{E}\to\mathscr{C},\qquad\qquad F_{\Delta}\colon\mathcal{E}^{\textnormal{op}}\to\mathscr{C},\qquad\qquad F_{\Pi}\colon\mathcal{E}\to\mathscr{C}
\]
and for each pullback as on the left below, the mate of the middle
isomorphism below whose existence is asserted by the Beck condition
\begin{equation}
\xymatrix{\bullet\ar[d]_{g'}\myar{f'}{r} & \bullet\ar[d]^{g} &  & \bullet\ar[d]_{F_{\Sigma}g'}\ar@{}[dr]|-{\cong} & \bullet\ar[l]_{F_{\Delta}f'}\ar[d]^{F_{\Sigma}g} &  & \bullet\myar{F_{\Pi}f'}{r}\ar@{}[dr]|-{\Downarrow\mathfrak{b}_{f,g}^{f',g'}} & \bullet\\
\bullet\ar[r]_{f} & \bullet &  & \bullet & \bullet\ar[l]^{F_{\Delta}f} &  & \bullet\ar[r]_{F_{\Pi}f}\ar[u]^{F_{\Delta}g'} & \bullet\ar[u]_{F_{\Delta}g}
}
\label{Beckmates}
\end{equation}
defining the Beck data as on the right above.
\end{lem}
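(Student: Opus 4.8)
The plan is to check that the data $\left(F_{\Sigma},F_{\Delta},F_{\Pi},{}^{F}\mathfrak{b}\right)$ assembled in the statement genuinely satisfies Definition \ref{distpair} together with the two invertibility conditions of Definition \ref{distcondition}. First I would record that $F_{\Delta}$ and $F_{\Pi}$ are pseudofunctors: since $F=F_{\Sigma}$ is a pseudofunctor and each $F_{\Sigma}f$ carries a chosen right adjoint $F_{\Delta}f$, the family $F_{\Delta}$ inherits pseudofunctoriality constraints as the mates of those of $F_{\Sigma}$ (doctrinal adjunction \cite{doctrinal}), the choice $F_{\Delta}1=1$ making it normal; iterating this with the chosen adjoints $F_{\Delta}f\dashv F_{\Pi}f$ makes $F_{\Pi}\colon\mathcal{E}\to\mathscr{C}$ a pseudofunctor. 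The adjunctions $F_{\Sigma}f\dashv F_{\Delta}f$ hold by construction, so the only remaining datum of a lax Beck triple is the family ${}^{F}\mathfrak{b}$, which I take as prescribed: for a pullback square the middle $2$-cell $\mathfrak{m}$ is the $F_{\Sigma}$-$F_{\Delta}$ Beck $2$-cell of the transposed square (the mate of the pseudofunctoriality isomorphism of $F_{\Sigma}$), invertible by the Beck condition, and $\mathfrak{b}_{f,g}^{f',g'}\colon F_{\Pi}f'\cdot F_{\Delta}g'\to F_{\Delta}g\cdot F_{\Pi}f$ is its mate under the four adjunctions $F_{\Sigma}g'\dashv F_{\Delta}g'$, $F_{\Sigma}g\dashv F_{\Delta}g$, $F_{\Delta}f'\dashv F_{\Pi}f'$ and $F_{\Delta}f\dashv F_{\Pi}f$. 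This is exactly the Beck direction demanded by \eqref{Beckmorphismscart}, and (as flagged in the Remark preceding the statement) differs from the $2$-cell one reads off directly from $F_{\Delta}f\dashv F_{\Pi}f$, which points the wrong way.

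The next step is to verify that ${}^{F}\mathfrak{b}$ satisfies the horizontal and vertical double-pullback conditions and the two nullary conditions of Definition \ref{Beckpair}. I would deduce these from the corresponding coherences of the source data by functoriality of mates: the isomorphisms $\mathfrak{m}$ (equivalently the pseudofunctoriality constraints of $F_{\Delta}$, themselves the total mates of those of $F_{\Sigma}$) satisfy the analogous pasting equations because $F_{\Sigma}$ and $F_{\Delta}$ are pseudofunctors and the $F_{\Sigma}$-$F_{\Delta}$ Beck data is coherent; since each $\mathfrak{b}$ is obtained from this source data by a fixed mate operation, and taking mates respects horizontal and vertical pasting, every required equation for $\mathfrak{b}$ is the mate of an equation already known to hold. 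This is the same style of argument used for the oplax constraints in Lemma \ref{polyclocally} and Lemma \ref{polycartLoplax}, and it produces a lax Beck triple.

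It remains to promote this to a Beck triple. The $\Sigma\Delta$ condition is immediate: the $F_{\Sigma}$-$F_{\Delta}$ Beck $2$-cells are invertible precisely because $F$ is assumed $2$-Beck. The $\Delta\otimes$ condition, namely invertibility of every $\mathfrak{b}$, is the main obstacle, since a mate of an invertible $2$-cell need not be invertible in general (this is exactly the possible failure of Beck--Chevalley). Here invertibility must be extracted from the adjoint-triple structure: factoring the mate defining $\mathfrak{b}$ through the $g$-edges first exhibits $\mathfrak{b}$ as the level-two mate, under $F_{\Delta}f\dashv F_{\Pi}f$ and $F_{\Delta}f'\dashv F_{\Pi}f'$, of the pseudofunctoriality isomorphism of $F_{\Delta}$, so that invertibility of $\mathfrak{b}$ is precisely the $F_{\Delta}$-$F_{\Pi}$ Beck condition for the square. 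The key point, to be proven by a functoriality-of-mates calculation using both triangle identities of the chain $F_{\Sigma}f\dashv F_{\Delta}f\dashv F_{\Pi}f$, is that this $F_{\Delta}$-$F_{\Pi}$ Beck condition holds if and only if the $F_{\Sigma}$-$F_{\Delta}$ one does; concretely one produces the inverse of $\mathfrak{b}$ as the corresponding mate of $\mathfrak{m}^{-1}$ and checks that the two composites reduce to identities by sliding the relevant units and counits across the chain. Granting this equivalence (the one asserted in the Remark), the $2$-Beck hypothesis forces every $\mathfrak{b}$ invertible, and the triple is a Beck triple as claimed.
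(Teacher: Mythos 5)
Your proposal is correct, and its overall strategy is the paper's: define the $\Delta$-$\Pi$ Beck data as the mate of the (invertible, by the 2-Beck hypothesis) $\Sigma$-$\Delta$ Beck isomorphism, verify the coherence axioms of Definition \ref{Beckpair} by functoriality of mates, and extract invertibility from the mates correspondence. Where you diverge is the invertibility of $\mathfrak{b}$, which you call the main obstacle and attack in two steps: first exhibiting $\mathfrak{b}$ as the partial mate, along the $f$-edges only, of the pseudofunctoriality constraint of $F_{\Delta}$ (i.e.\ as a $\Delta$-$\Pi$ Beck 2-cell), and then proving that the $\Delta$-$\Pi$ Beck condition is equivalent to the $\Sigma$-$\Delta$ one. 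The paper disposes of this in one line, and your stated worry --- that a mate of an invertible 2-cell need not be invertible --- while true for partial mates, does not apply here: since all four edges of the square are dualized, $\mathfrak{b}$ is the \emph{total} mate (conjugate) of the middle isomorphism, that is, an isomorphism between the composite left adjoints $F_{\Sigma}g'\cdot F_{\Delta}f'$ and $F_{\Delta}f\cdot F_{\Sigma}g$ transported to their respective right adjoints $F_{\Pi}f'\cdot F_{\Delta}g'$ and $F_{\Delta}g\cdot F_{\Pi}f$, and a conjugate of an isomorphism of left adjoints is automatically an isomorphism of right adjoints, its inverse being the conjugate of the inverse (the composites collapse to identities by functoriality of mates --- exactly the computation you sketch at the end). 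So your detour is sound, and it has the side benefit of making explicit the content of the Remark preceding the statement (the equivalence of the two Beck conditions), but it buys nothing for the lemma itself: the paper reaches the same conclusion directly, without singling out the $\Delta$-$\Pi$ Beck condition or appealing to the triangle identities of the adjoint chain beyond what functoriality of mates already encodes.
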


\begin{proof}
One needs to check that the defined Beck data satisfies the necessary
coherence conditions, but this trivially follows from functoriality
of mates. Also, every component of the Beck data $\mathfrak{b}_{f,g}^{f',g'}$
defined as in the above lemma must be invertible. This is since an
isomorphism of left adjoints must correspond to an isomorphism of
right adjoints under the mates correspondence.
\end{proof}
It will be useful to give the Beck triples arising this way a name,
and so we make the following definition.
\begin{defn}
We call a Beck triple $\mathcal{E}\to\mathscr{C}$ \emph{cartesian
}if for every morphism $f\in\mathcal{E}$ there exists adjunctions
$F_{\Sigma}f\dashv F_{\Delta}f\dashv F_{\Pi}f$ and the $\Delta\Pi$
Beck data corresponds to the $\Sigma\Delta$ data via the mates correspondence
as in \eqref{Beckmates}.
\end{defn}

We may also ask that a cartesian Beck triple (or the corresponding
2-Beck functor) satisfies a distributivity condition.
\begin{defn}
Given the assumptions and data of Lemma \ref{recoverdistpair}, we
say a 2-Beck pseudofunctor $F\colon\mathcal{E}\to\mathscr{C}$ satisfies
the\emph{ distributivity condition} if the cartesian Beck triple recovered
from Lemma \ref{recoverdistpair} satisfies the distributivity condition
of Definition \ref{distcondition} (meaning this cartesian Beck triple
is a DistBeck triple).
\end{defn}

Similar to the case of $\mathbf{Poly}_{c}\left(\mathcal{E}\right)$,
we again have embeddings
\[
\left(-\right)_{\Sigma}:\mathcal{E}\to\mathbf{Poly}\left(\mathcal{E}\right),\quad\left(-\right)_{\Delta}:\mathcal{E}^{\textnormal{op}}\to\mathbf{Poly}\left(\mathcal{E}\right),\quad\left(-\right)_{\Pi}:\mathcal{E}\to\mathbf{Poly}\left(\mathcal{E}\right)
\]
The main difference here is that with these embeddings we have triples
of adjunctions $f_{\Sigma}\dashv f_{\Delta}\dashv f_{\Pi}$ for every
morphism $f\in\mathcal{E}$.

Trivially we have the inclusion $\left(-\right)_{\Sigma\Delta}\colon\mathbf{Span}\left(\mathcal{E}\right)\to\mathbf{Poly}\left(\mathcal{E}\right)$
of spans into polynomials given by the assignment 
\[
\xymatrix{ & \bullet\ar[rd]^{t}\ar[ld]_{s}\ar[dd]|-{f} &  &  &  & \bullet\ar[dl]_{s}\ar[r]^{\textnormal{id}} & \bullet\ar[rd]^{t}\ar@{=}[d]\\
\bullet &  & \bullet & \mapsto & \bullet & \bullet\ar[d]_{f}\ar[r]|-{\textnormal{id}}\ar[u]^{\textnormal{id}} & \bullet\ar[d]^{f} & \bullet\\
 & \bullet\ar[ru]_{v}\ar[lu]^{u} &  &  &  & \bullet\ar[ul]^{u}\ar[r]_{\textnormal{id}} & \bullet\ar[ur]_{v}
}
\]
The less obvious embedding $\left(-\right)_{\Delta\Pi}\colon\mathbf{Span}\left(\mathcal{E}\right)^{\textnormal{co}}\to\mathbf{Poly}\left(\mathcal{E}\right)$
is the canonical embedding of spans with reversed 2-cells into polynomials,
given by the assignment
\[
\xymatrix{ & \bullet\ar[rd]^{t}\ar[ld]_{s} &  &  &  & \bullet\ar[dl]_{s}\ar[r]^{t} & \bullet\ar[rd]^{\textnormal{id}}\ar@{=}[d]\\
\bullet &  & \bullet & \mapsto & \bullet & \bullet\ar[d]_{\textnormal{id}}\ar[r]|-{v}\ar[u]^{f} & \bullet\ar[d]^{\textnormal{id}} & \bullet\\
 & \bullet\ar[ru]_{v}\ar[lu]^{u}\ar[uu]|-{f} &  &  &  & \bullet\ar[ul]^{u}\ar[r]_{v} & \bullet\ar[ru]_{\textnormal{id}}
}
\]

We now have enough to state the universal property of polynomials.
\begin{thm}
[Universal Properties of Polynomials: General Setting] \label{unipolythm}
Given a locally cartesian closed category $\mathcal{E}$ with chosen
pullbacks and distributivity pullbacks, composition with the canonical
embedding $\left(-\right)_{\Sigma}\colon\mathcal{E}\to\mathbf{Poly}\left(\mathcal{E}\right)$
defines the equivalence of categories
\[
\mathbf{Greg}\left(\mathbf{Poly}\left(\mathcal{E}\right),\mathscr{C}\right)\simeq\mathbf{2Beck}\left(\mathcal{E},\mathscr{C}\right)
\]
which restricts to the equivalence
\[
\mathbf{Icon}\left(\mathbf{Poly}\left(\mathcal{E}\right),\mathscr{C}\right)\simeq\mathbf{DistBeck}\left(\mathcal{E},\mathscr{C}\right)
\]
for any bicategory $\mathscr{C}$. 
\end{thm}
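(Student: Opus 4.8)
The plan is to bootstrap from the cartesian result, Theorem \ref{unipolycartthm}, exploiting that $\mathbf{Poly}_{c}\left(\mathcal{E}\right)$ and $\mathbf{Poly}\left(\mathcal{E}\right)$ share the same objects, $1$-cells, composition, and hence the same oplax-constraint data; the only difference lies in the larger supply of $2$-cells. For essential surjectivity, given a $2$-Beck pseudofunctor $F$, Lemma \ref{recoverdistpair} produces a cartesian Beck triple $\left(F_{\Sigma},F_{\Delta},F_{\Pi}\right)$: the $\Sigma\Delta$ Beck data is invertible because $F$ satisfies the Beck condition, and the induced $\Delta\Pi$ Beck data is invertible by that lemma, so both conditions $\left(1\right)$ and $\left(2\right)$ of Definition \ref{distcondition} hold and we have a genuine Beck triple. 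Feeding it into Theorem \ref{unipolycartthm} yields a gregarious functor $L_{c}\colon\mathbf{Poly}_{c}\left(\mathcal{E}\right)\to\mathscr{C}$ lying in $\mathbf{Greg}_{\Sigma\Delta,\Delta\otimes}$ with $\left(L_{c}\right)_{\Sigma}=F$.

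I would then extend $L_{c}$ to a functor $L$ on $\mathbf{Poly}\left(\mathcal{E}\right)$ agreeing with $L_{c}$ on objects, $1$-cells, composition and oplax constraints, the remaining task being to define $L$ on the general $2$-cells and to verify coherence. By Proposition \ref{polyadjprop} every adjunction of $\mathbf{Poly}\left(\mathcal{E}\right)$ is assembled from the two families $f_{\Sigma}\dashv f_{\Delta}$ and $f_{\Delta}\dashv f_{\Pi}$; correspondingly each general $2$-cell factors as a cartesian $2$-cell pasted with a \emph{vertical} one generated by the units and counits of the $f_{\Delta}\dashv f_{\Pi}$ adjunctions, the latter being exactly the image of the embedding $\left(-\right)_{\Delta\Pi}\colon\mathbf{Span}\left(\mathcal{E}\right)^{\textnormal{co}}\to\mathbf{Poly}\left(\mathcal{E}\right)$. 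Since $F$ is $2$-Beck we have $F_{\Delta}f\dashv F_{\Pi}f$ in $\mathscr{C}$, so I define $L$ on these vertical generators using the corresponding units and counits in $\mathscr{C}$, forcing $L$ to preserve the extra adjunctions; one checks that the prescription is independent of the chosen factorisation and respects vertical composition, both reducing via functoriality of mates to the corresponding facts for $L_{c}$ and the triangle identities.

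For the forward direction, a gregarious $F\colon\mathbf{Poly}\left(\mathcal{E}\right)\to\mathscr{C}$ transports the chain $f_{\Sigma}\dashv f_{\Delta}\dashv f_{\Pi}$ to $F_{\Sigma}f\dashv F_{\Delta}f\dashv F_{\Pi}f$, so $F_{\Sigma}$ is $2$-sinister. The Beck condition now comes for free: for a pullback square the $\Sigma\Delta$ Beck $2$-cell is the canonical isomorphism of spans $f'_{\Sigma}\cdot g'_{\Delta}\cong g_{\Delta}\cdot f_{\Sigma}$ transported through the two oplax constraints $\varphi_{f'_{\Sigma},g'_{\Delta}}$ and $\varphi_{g_{\Delta},f_{\Sigma}}$, and both are invertible by gregariousness because the outer factors $f'_{\Sigma}$ and $g_{\Delta}$ possess right adjoints $f'_{\Delta}$ and $g_{\Pi}$ in $\mathbf{Poly}\left(\mathcal{E}\right)$. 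This is precisely the feature absent in $\mathbf{Span}\left(\mathcal{E}\right)$, and it explains why the general setting lands in $\mathbf{2Beck}$ rather than merely $\mathbf{2Sin}$. For fully faithfulness I argue as in Lemma \ref{spaniconinv} and Theorem \ref{unispansthm}: an icon is determined by its restriction $\alpha_{\Sigma}$, since by Lemma \ref{mateinv} and its $\textnormal{co}$-dual the components at $\Delta$- and $\Pi$-images are the mates of $\alpha_{\Sigma}$ (hence invertible) and every $1$-cell is a composite of such images; conversely any invertible icon $\alpha_{\Sigma}\colon F_{\Sigma}\Rightarrow G_{\Sigma}$ extends uniquely, the icon coherences and the compatibility with both families of Beck data following automatically from functoriality of mates.

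The restriction to the second equivalence is then immediate: as $L$ and $L_{c}$ carry identical oplax constraints, $L$ is a pseudofunctor exactly when $L_{c}$ is, which by Theorem \ref{unipolycartthm} happens precisely when the recovered Beck triple satisfies the $\Sigma\otimes$ distributivity condition, i.e. when $F\in\mathbf{DistBeck}\left(\mathcal{E},\mathscr{C}\right)$. The hard part of the whole argument is the coherence deferred in the second paragraph: verifying that the binary oplax constraints---which encode polynomial composition through the distributivity pullbacks and the Beck $2$-cells, as displayed in \eqref{polycompconstraint}---remain natural with respect to the \emph{non-cartesian} generators. Since $\mathbf{Poly}\left(\mathcal{E}\right)$ is not generic, the simplifying machinery of Subsection \ref{generic} is unavailable, so each of these naturality squares must be checked directly; each reduces to a functoriality-of-mates computation in which a unit or counit of an $F_{\Delta}f\dashv F_{\Pi}f$ adjunction is slid past the Beck $2$-cells comprising the constraint, invoking the horizontal and vertical double-pullback conditions on $\mathfrak{b}$.
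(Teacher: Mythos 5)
Your proposal follows essentially the same route as the paper: the paper's Proposition \ref{polyLoplax} performs exactly your bootstrap (Lemma \ref{recoverdistpair}, then Theorem \ref{unipolycartthm} to get $\overline{L}$ on $\mathbf{Poly}_{c}\left(\mathcal{E}\right)$, then extension to general 2-cells via the triangle/cartesian decomposition coming from $\left(-\right)_{\Delta\Pi}$, with the bulk of the work being naturality of the shared oplax constraints against whiskerings of triangle morphisms, carried out by the mates-and-Beck-coherence computations you describe), and your fully-faithfulness and restriction arguments are the ones in the paper's proof of Theorem \ref{unipolythm}. Your explicit check that the restriction $F_{\Sigma}$ of a gregarious functor satisfies the Beck condition---invertibility of $\varphi_{f'_{\Sigma},g'_{\Delta}}$ and $\varphi_{g_{\Delta},f_{\Sigma}}$ by gregariousness, using the extra right adjoints $f'_{\Delta}$ and $g_{\Pi}$ available in $\mathbf{Poly}\left(\mathcal{E}\right)$---is left implicit in the paper and is a correct account of why the general setting lands in $\mathbf{2Beck}\left(\mathcal{E},\mathscr{C}\right)$ rather than merely $\mathbf{2Sin}\left(\mathcal{E},\mathscr{C}\right)$.
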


\begin{rem}
One might ask if there is a universal property without the Beck condition
being required. The problem is that if the restrictions to $\mathbf{Span}\left(\mathcal{E}\right)$
and $\mathbf{Span}\left(\mathcal{E}\right)^{\textnormal{co}}$ are
only required gregarious, but not pseudo, we do not have a canonical
way to construct the necessary $\Delta\Pi$ Beck data $\mathfrak{b}$,
and so such a universal property would be unnatural.
\end{rem}

\subsection{Proving the universal property}

Before proving Theorem \ref{unipolythm} we will need to show how
to reconstruct a gregarious functor $\mathbf{Poly}\left(\mathcal{E}\right)\to\mathscr{C}$
from a 2-Beck pseudofunctor $\mathcal{E}\to\mathscr{C}$. The following
proposition describes this construction.
\begin{prop}
\label{polyLoplax} Let $\mathcal{E}$ be a locally cartesian closed
category seen as a locally discrete 2-category, and let $\mathscr{C}$
be a bicategory. Suppose $F\colon\mathcal{E}\to\mathscr{C}$ is a
given 2-Beck pseudofunctor, and for each morphism $f\in\mathcal{E}$
define $F_{\Sigma}f:=Ff$, take $F_{\Delta}f$ to be a chosen right
adjoint of $Ff$ (choosing $F_{\Delta}$ to strictly preserve identities),
and take $F_{\Pi}f$ to be a chosen right adjoint of $F_{\Delta}f$
(again choosing $F_{\Pi}$ to strictly preserve identities). We may
then:
\begin{enumerate}
\item define a lax Beck triple as in Lemma \ref{recoverdistpair};
\item define a gregarious functor $\overline{L}\colon\mathbf{Poly}_{c}\left(\mathcal{E}\right)\to\mathscr{C}$
satisfying the $\Sigma\Delta$ and $\Delta\Pi$ Beck conditions;
\item define local functors
\[
L\colon\mathbf{Poly}\left(\mathcal{E}\right)_{X,Y}\to\mathscr{C}_{LX,LY},\qquad X,Y\in\mathcal{E}
\]
assigning each general morphism of polynomials
\[
\xymatrix{ & E\ar[dl]_{s}\ar[r]^{p} & B\ar[dr]^{t}\ar@{=}[d]\\
X & S\ar[d]_{f}\ar[r]|-{pe}\ar[u]^{e} & B\ar[d]^{g} & Y\\
 & M\ar[r]_{q}\ar[ul]^{u} & N\ar[ur]_{v}
}
\]
to the pasting
\[
\xymatrix@=1em{ &  & FE\ar[rr]^{F_{\Pi}p}\ar[dd]|-{F_{\Delta}e}\ar@{}[rddr]|-{\Downarrow\mathfrak{m}} &  & FB\ar[ddrr]^{F_{\Sigma}t}\ar@{=}[dd]\\
\\
FX\ar[urur]^{F_{\Delta}s}\ar[rrdd]_{F_{\Delta}u} & \ar@{}[]|-{\Downarrow\alpha} & FS\ar[rr]|-{F_{\Pi}pe}\ar@{}[rddr]|-{\Downarrow\mathfrak{b}} &  & FB & \ar@{}[]|-{\Downarrow\gamma\;\;} & FY\\
\\
 &  & FM\ar[uu]|-{F_{\Delta}f}\ar[rr]_{F_{\Pi}q} &  & FN\ar[uu]|-{F_{\Delta}g}\ar[rruu]_{F_{\Sigma}v}
}
\]
where of the diagrams
\[
\xymatrix{FS\ar[r]^{1_{FS}}\ar[d]|-{F_{\Sigma}s\cdot F_{\Sigma}e}\ar@{}[dr]|-{\cong} & FS\ar[d]|-{F_{\Sigma}u\cdot F_{\Sigma}f} & FE\ar[r]^{F_{\Pi}p}\ar@{}[rd]|-{\cong} & FB & FB\ar[r]^{F_{\Sigma}t}\ar[d]|-{F_{\Sigma}g}\ar@{}[dr]|-{\cong} & FY\ar[d]|-{1_{FY}}\\
FX\ar[r]_{1_{FX}} & FX & FS\ar[r]|-{F_{\Pi}pe}\ar[u]|-{F_{\Pi}e} & FB\ar[u]|-{1_{FB}} & FN\ar[r]_{F_{\Sigma}v} & FY
}
\]
\begin{enumerate}
\item $\alpha$ is constructed as the mate of the left diagram under the
adjunctions $F_{\Sigma}s\cdot F_{\Sigma}e\dashv F_{\Delta}e\cdot F_{\Delta}s$
and $F_{\Sigma}u\cdot F_{\Sigma}f\dashv F_{\Delta}f\cdot F_{\Delta}u$;
\item $\mathfrak{m}$ is constructed as the mate of the middle diagram under
the adjunctions $F_{\Delta}e\dashv F_{\Pi}e$ and $1_{FB}\dashv1_{FB}$;
\item $\mathfrak{b}$ is the component of the Beck data at the given pullback;
\item $\gamma$ is the mate of the isomorphism on the right above under
the adjunctions $F_{\Sigma}g\dashv F_{\Delta}g$ and $\textnormal{1}_{FY}\dashv\textnormal{1}_{FY}$.
\end{enumerate}
\item define a gregarious functor $L\colon\mathbf{Poly}\left(\mathcal{E}\right)\to\mathscr{C}$.
\end{enumerate}
\end{prop}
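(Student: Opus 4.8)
The plan is to build the four pieces in the stated order, leaning on the cartesian result Theorem \ref{unipolycartthm} wherever possible and isolating the passage to general 2-cells as the only genuinely new work. Part (1) is immediate from Lemma \ref{recoverdistpair}. For part (2), I would observe that the lax Beck triple produced there is in fact a \emph{cartesian} Beck triple: its $\Sigma\Delta$ Beck data is invertible because $F$ is 2-Beck, and its $\Delta\Pi$ Beck data is invertible because it was defined in \eqref{Beckmates} as the mate of an invertible 2-cell (an isomorphism of left adjoints corresponds under mates to an isomorphism of right adjoints). Hence both conditions $(1)$ and $(2)$ of Definition \ref{distcondition} hold, and applying the essential-surjectivity half of Theorem \ref{unipolycartthm} to this Beck triple yields a gregarious functor $\overline{L}\colon\mathbf{Poly}_{c}\left(\mathcal{E}\right)\to\mathscr{C}$ which, by the ``Restrictions'' clause of that theorem, lies in $\mathbf{Greg}_{\Sigma\Delta,\Delta\otimes}$, i.e. satisfies the $\Sigma\Delta$ and $\Delta\Pi$ Beck conditions (recall that here $F_{\otimes}=F_{\Pi}$).

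For part (3), I would first check that the stated pasting agrees with the cartesian assignment of Lemma \ref{polyclocally} whenever the general morphism is cartesian: in that case $e$ is invertible, the adjunction $F_{\Delta}e\dashv F_{\Pi}e$ is up to isomorphism $1\dashv1$, so $\mathfrak{m}$ collapses to an identity and the four-piece pasting reduces to the cartesian one. Thus the local functors $L$ genuinely extend those underlying $\overline{L}$. Well-definedness on equivalence classes is routine: two representatives with both squares pullbacks induce a canonical isomorphism of pullbacks over $E$, and pseudofunctoriality of $F_{\Pi}$ together with functoriality of mates render the pasting invariant. Functoriality then splits into identities and composition; preservation of identities is the observation just made that $\alpha,\mathfrak{m},\mathfrak{b},\gamma$ are all identities on an identity 2-cell.

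For the composition clause of part (3) I would separate a general morphism into its cartesian component and its ``vertical'' $e$-direction component (the data recorded by $e$, in the spirit of the embedding $\left(-\right)_{\Delta\Pi}$), reducing the check to two calculations: composition of the cartesian components, handled exactly as in Lemma \ref{polyclocally} using the vertical double pullback condition on $\mathfrak{b}$ and functoriality of mates for $\alpha$ and $\gamma$; and composition of the $\mathfrak{m}$-pieces, which is a functoriality-of-mates calculation for the successive adjunctions $F_{\Delta}e\dashv F_{\Pi}e$ together with pseudofunctoriality of $F_{\Pi}$. Finally, for part (4) I would equip $L$ with the very oplax constraints $\varphi,\lambda$ already carried by $\overline{L}$; since the objects, 1-cells, and 1-cell composition of $\mathbf{Poly}\left(\mathcal{E}\right)$ coincide with those of $\mathbf{Poly}_{c}\left(\mathcal{E}\right)$, these are literally the same 2-cells of $\mathscr{C}$, the associativity and unit axioms hold verbatim, $\lambda$ stays invertible, and $L$ is normal oplax. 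Gregariousness follows from Proposition \ref{polyadjprop}: every adjunction in $\mathbf{Poly}\left(\mathcal{E}\right)$ is a composite of the two basic ones, whose $L$-images are the adjunctions $F_{\Sigma}f\dashv F_{\Delta}f$ and $F_{\Delta}f\dashv F_{\Pi}f$, so $L$ preserves adjunctions.

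The single remaining, and principal, obstacle is local naturality of the binary constraint $\varphi$ with respect to \emph{general} 2-cells; for cartesian 2-cells it is inherited from $\overline{L}$, so via the decomposition of part (3) it suffices to verify it on the purely vertical $e$-direction 2-cells. There the naturality square unwinds, using the explicit form \eqref{polycompconstraint} of $\varphi$ and the pasting defining $L$ on general morphisms, into an identity of pastings to be established by functoriality of mates for the $F_{\Delta}\dashv F_{\Pi}$ adjunctions together with the horizontal Beck coherence conditions on $\mathfrak{b}$. This is precisely the ``additional coherence conditions concerning naturality with respect to these more general 2-cells'' flagged in the section introduction, and is where the bulk of the remaining effort lies.
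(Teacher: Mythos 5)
Your architecture matches the paper's exactly (parts 1 and 2 identical; part 3 extends the local functors to general 2-cells; part 4 reuses $\overline{L}$'s constraint data and checks naturality against the larger class of 2-cells), but there is a genuine gap at both of the places where the real work happens. In part 3, your claim that functoriality of the local functors ``reduces to two calculations'' --- composing the cartesian parts and composing the $\mathfrak{m}$-parts --- fails as stated. When you vertically compose two general 2-cells, each canonically written as a triangle morphism followed by a cartesian morphism, the middle of the composite is a \emph{cartesian morphism followed by a triangle morphism}, and re-normalizing this back into triangle-then-cartesian form is precisely the nontrivial step; your decomposition never confronts it. The paper isolates this as the only nontrivial case of functoriality and proves it via the interchange identity \eqref{mbinterchange}, which is not a pure functoriality-of-mates statement: it requires pasting with $\mathfrak{b}^{-1}$, realizing $\mathfrak{m}$ as a whiskered unit of $F_{\Delta}e\dashv F_{\Pi}e$, transferring that unit across the mates correspondence, and reassembling it against the Beck data. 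Without this lemma the two kinds of composition do not decouple, so part 3 cannot be completed as you describe.

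The same omission recurs, amplified, in part 4. You correctly identify naturality of $\varphi$ at triangle 2-cells as the principal obstacle, but ``functoriality of mates together with the horizontal Beck coherence conditions on $\mathfrak{b}$'' does not suffice, and the reduction you invoke is incomplete: naturality must be verified separately for \emph{right} and \emph{left} whiskerings of triangle morphisms by polynomials. The right whiskering is indeed an \eqref{mbinterchange}-type computation, but the left whiskering is where polynomial composition via distributivity pullbacks enters essentially: one must construct the two composite polynomials explicitly (each involving a distributivity pullback), reduce to simple triangle morphisms of the form $x\colon\left(u_{1},1,1\right)\to\left(u_{2},x,1\right)$ and justify that this reduction suffices (the paper does this by re-expressing a general left whiskering $P;\theta_{t}$ as $P;\left(\theta_{x};R\right)$ and appealing to associativity of the constraints plus the already-established right-whiskering case), produce induced cartesian comparison morphisms from factorizations of pullbacks, and then verify a long chain of pasting equalities using \eqref{mbinterchange} repeatedly. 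This is the longest calculation in the paper's proof, and it is exactly the content that your phrase ``this is where the bulk of the remaining effort lies'' defers rather than discharges. As it stands, your proposal is the paper's skeleton with its two load-bearing computations --- the cartesian/triangle interchange and the left-whiskering naturality --- missing.
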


\begin{proof}
We prove the different parts of the statement separately.

\noun{Part 1. }See\noun{ }Lemma \ref{recoverdistpair}.

\noun{Part 2. }It then follows from Theorem \ref{unipolycartthm}
that this cartesian Beck triple gives rise to a gregarious functor
$\overline{L}\colon\mathbf{Poly}_{c}\left(\mathcal{E}\right)\to\mathscr{C}$.
The $\Sigma\Delta$ invertibility condition translates to an $\Delta\Pi$
invertibility condition via the mates correspondence; an isomorphism
of left adjoints must correspond to an isomorphism of right adjoints.
Therefore each component of the Beck data $\mathfrak{b}$ is invertible.

\noun{Part 3.} The goal here is to show that we have local functors
\[
L\colon\mathbf{Poly}\left(\mathcal{E}\right)_{X,Y}\to\mathscr{C}_{LX,LY},\qquad X,Y\in\mathcal{E}
\]
We first note, for well definedness, that given two general morphisms
of polynomials as below
\[
\xymatrix{ & E\ar[dl]_{s}\ar[r]^{p} & B\ar[dr]^{t}\ar@{=}[d] &  &  &  & E\ar[dl]_{s}\ar[r]^{p} & B\ar[dr]^{t}\ar@{=}[d]\\
X & S_{1}\ar[d]_{f_{1}}\ar[r]|-{pe_{1}}\ar[u]^{e_{1}} & B\ar[d]^{g} & Y & \sim & X & S_{2}\ar[d]_{f_{2}}\ar[r]|-{pe_{2}}\ar[u]^{e_{2}} & B\ar[d]^{g} & Y\\
 & M\ar[r]_{q}\ar[ul]^{u} & N\ar[ur]_{v} &  &  &  & M\ar[r]_{q}\ar[ul]^{u} & N\ar[ur]_{v}
}
\]
equivalent in that there exists an isomorphism $\nu\colon S_{1}\to S_{2}$
such that $f_{2}\nu=f_{1}$ and $e_{2}\nu=e_{1}$, it follows from
a straightforward functoriality of mates calculation that $L_{X,Y}$
assigns both morphisms of polynomials to equal pastings.

As local functoriality with respect to cartesian morphisms was shown
in Lemma \ref{polyclocally}, local functoriality with respect to
``triangle morphisms'' is a straightforward functoriality of mates
calculation, and the case of a triangle morphism followed by a cartesian
morphism is almost by definition, it suffices to consider the case
of a cartesian morphism followed by a triangle morphism (the only
non trivial case to consider). 

Suppose we are given a composite of polynomial morphisms as on the
left below
\[
\xymatrix{ & E\ar[dl]_{s}\ar[r]^{p}\ar[d]_{f}\ar@{}[rd]|-{\textnormal{pb}} & B\ar[dr]^{t}\ar[d]^{g} &  &  & E\ar[dl]_{s}\ar[rr]^{p} &  & B\ar[dr]^{t}\ar@{=}[d]\\
X & M\ar[r]|-{q}\ar[l]|-{u} & N\ar@{=}[d]\ar[r]|-{v} & Y\ar@{}[r]|-{=} & X & P\ar[d]_{f'}\ar[r]^{e'}\ar[u]^{e'}\ar@{}[rd]|-{\textnormal{pb}} & E\ar[d]|-{f}\ar[r]^{p}\ar@{}[rd]|-{\textnormal{pb}} & B\ar[d]^{g} & Y\\
 & J\ar[r]_{r}\ar[ul]^{a}\ar[u]^{e} & N\ar[ru]_{b} &  &  & J\ar[r]_{e}\ar[ul]^{a} & M\ar[r]_{q} & N\ar[ur]_{b}
}
\]
evaluated as the diagram on the right above. We must check that
\begin{equation}
\xymatrix{ & FE\ar[r]^{F_{\Pi}p}\ar@{}[rd]|-{\Downarrow\mathfrak{b}} & FB &  &  & FE\ar[r]^{F_{\Pi}p}\ar@{}[rd]|-{\Downarrow\mathfrak{m}}\ar[d]|-{F_{\Delta}e'} & FB\\
FX\ar@/^{0.7pc}/[ur]_{\;\;\cong}^{F_{\Delta}s}\ar@/_{0.7pc}/[rd]_{F_{\Delta}a}^{\;\;\cong}\ar[r]|-{F_{\Delta}u} & FM\ar[r]|-{F_{\Pi}q}\ar[u]|-{F_{\Delta}f}\ar@{}[rd]|-{\Downarrow\mathfrak{m}}\ar[d]|-{F_{\Delta}e} & FN\ar[u]_{F_{\Delta}g} & = & FX\ar@/^{0.7pc}/[ur]^{F_{\Delta}s}\ar@/_{0.7pc}/[rd]_{F_{\Delta}a}\ar@{}[r]|-{\Downarrow\alpha} & FP\ar@{}[rd]|-{\Downarrow\mathfrak{b}}\ar[r]|-{F_{\Pi}pe'} & FB\ar@{=}[u]\\
 & FJ\ar[r]_{F_{\Pi}r} & FN\ar@{=}[u] &  &  & FJ\ar[r]_{F_{\Pi}r}\ar[u]|-{F_{\Delta}f'} & FN\ar[u]_{F_{\Delta}g}
}
\label{mbinterchange}
\end{equation}
To see this, we paste both sides with the inverse of the $\mathfrak{b}$
appearing on the right above, and check that have an equality. Starting
with the observation that the left side pasted with this inverse is
the left diagram below, we see
\[
\xymatrix@=1em{ &  & FE\ar[rrrr]^{F_{\Pi}p}\ar@{}[rrd]|-{\Downarrow\mathfrak{b}} &  &  &  & FB &  &  &  & FE\ar[rrr]^{F_{\Pi}p}\ar@{}[rrdrd]|-{=} &  &  & FB\\
 & \ar@{}[]|-{\;\;\cong} &  &  & FN\ar[rru]|-{F_{\Delta}g}\ar@{}[rdr]|-{\Downarrow\mathfrak{b}^{-1}} &  &  &  &  & \ar@{}[]|-{\;\;\cong}\\
FX\ar@/^{0.7pc}/[urur]^{F_{\Delta}s}\ar@/_{0.7pc}/[rrdd]_{F_{\Delta}a}\ar[rr]|-{F_{\Delta}u} &  & FM\ar[rru]|-{F_{\Pi}q}\ar[uu]|-{F_{\Delta}f}\ar[dd]|-{F_{\Delta}e}\ar@{}[rdr]|-{\Downarrow\mathfrak{m}} &  &  &  & FE\ar[uu]_{F_{\Pi}p} & = & FX\ar@/^{0.7pc}/[urur]^{F_{\Delta}s}\ar@/_{0.7pc}/[rrdd]_{F_{\Delta}a}\ar[rr]|-{F_{\Delta}u} &  & FM\ar[uu]|-{F_{\Delta}f}\ar[dd]|-{F_{\Delta}e}\ar[r]_{\Downarrow\eta\;\;}^{\textnormal{id}} & FM\ar[rr]_{F_{\Delta}f}\ar@{}[rdrd]|-{\Downarrow\mathfrak{b}^{-1}} &  & FE\ar[uu]_{F_{\Pi}p}\\
 & \ar@{}[]|-{\;\;\cong} &  &  & FM\ar[uu]|-{F_{\Pi}q}\ar[urr]|-{F_{\Delta}f}\ar@{}[rdr]|-{\Downarrow\mathfrak{b}^{-1}} &  &  &  &  & \ar@{}[]|-{\;\;\cong}\\
 &  & FJ\ar[rrrr]_{F_{\Delta}f'}\ar[urr]|-{F_{\Pi}e} &  &  &  & FP\ar[uu]_{F_{\Pi}e'} &  &  &  & FJ\ar[rrr]_{F_{\Delta}f'}\ar[uur]|-{F_{\Pi}e} &  &  & FP\ar[uu]_{F_{\Pi}e'}
}
\]
upon realizing $\mathfrak{m}$ as a whiskering of a unit and canceling
the $\mathfrak{b}$. Transferring the unit along the mates correspondence
gives the left diagram below
\[
\xymatrix@=1em{ &  & FE\ar[rr]^{F_{\Pi}p}\ar@{}[rrdd]|-{=} &  & FB &  &  &  &  &  & FE\ar[rr]^{F_{\Pi}p}\ar@{}[rrdd]|-{=} &  & FB\\
 & \ar@{}[]|-{\;\;\cong} &  &  & \; & \ar@{}[l]|-{\;\;\;\;=} &  &  &  & \ar@{}[]|-{\;\;\cong} &  &  & \;\\
FX\ar@/^{0.7pc}/[urur]^{F_{\Delta}s}\ar@/_{0.7pc}/[rrdd]_{F_{\Delta}a}\ar[rr]|-{F_{\Delta}u} &  & FM\ar[uu]|-{F_{\Delta}f}\ar[dd]|-{F_{\Delta}e}\ar[rr]|-{F_{\Delta}f}\ar@{}[rrdd]|-{\cong} &  & FE\ar[dd]|-{F_{\Delta}e'}\ar[rr]|-{\textnormal{id}}\ar[uu]|-{F_{\Pi}p} &  & FE\ar@/_{0.7pc}/[uull]_{F_{\Pi}p} & = & FX\ar@/^{0.7pc}/[urur]^{F_{\Delta}s}\ar@/_{0.7pc}/[rrdd]_{F_{\Delta}a}\ar[rr]|-{F_{\Delta}u} &  & FM\ar[uu]|-{F_{\Delta}f}\ar[dd]|-{F_{\Delta}e}\ar[rr]|-{F_{\Delta}f}\ar@{}[rrdd]|-{\cong} &  & FE\ar[dd]|-{F_{\Delta}e'}\ar[uu]|-{F_{\Pi}p}\ar@{}[rr]|-{\Downarrow\mathfrak{m}\;\;\;\;} &  & \;\\
 & \ar@{}[]|-{\;\;\cong} &  &  & \; & \ar@{}[l]|-{\;\;\;\;\Downarrow\eta} &  &  &  & \ar@{}[]|-{\;\;\cong} &  &  & \;\\
 &  & FJ\ar[rr]_{F_{\Delta}f'} &  & FP\ar@/_{0.7pc}/[uurr]_{F_{\Pi}e'} &  &  &  &  &  & FJ\ar[rr]_{F_{\Delta}f'} &  & FP\ar@/_{3pc}/[uuuu]_{F_{\Pi}pe'}
}
\]
which is seen as the right diagram after writing the whiskering of
the unit back in terms of $\mathfrak{m}$. This is clearly the right
side of \eqref{mbinterchange} with the pasting of the 2-cell $\mathfrak{b}$
having been undone.

\noun{Part 4. }The goal here is to show that this now defines a gregarious
functor 
\[
L\colon\mathbf{Poly}\left(\mathcal{E}\right)\to\mathscr{C}
\]

Now, as we already have a gregarious functor $\overline{L}\colon\mathbf{Poly}_{c}\left(\mathcal{E}\right)\to\mathscr{C}$,
given by the restriction of $L$ to the cartesian setting, and composition
of 1-cells $\mathbf{Poly}_{c}\left(\mathcal{E}\right)$ and $\mathbf{Poly}\left(\mathcal{E}\right)$
is defined the same way, it suffices to check that the oplax constraint
data $\varphi,\lambda$ of $\overline{L}$ defines oplax constraint
data on $L$. Indeed, $\varphi$ and $\lambda$ are already known
to satisfy the nullary and associativity axioms and so we need only
check naturality of the constraint data with respect to our larger
class of 2-cells. 

Taking $\theta\colon P\to P''$ and $\phi\colon Q\to Q''$ to be general
morphisms of polynomials, canonically decomposed into triangle parts
$\theta_{t},\phi_{t}$ and cartesian parts $\theta_{c},\phi_{c}$,
we note that to see that the left diagram commutes below
\[
\xymatrix@R=1.5em{L\left(P;Q\right)\ar[r]^{\varphi_{P,Q}}\ar[dd]_{L\left(\theta;\phi\right)} & LP;LQ\ar[dd]^{L\theta;L\phi} &  & L\left(P;Q\right)\ar[r]^{\varphi_{P,Q}}\ar[d]_{L\left(\theta_{t};\phi_{t}\right)} & LP;LQ\ar[d]^{L\theta_{t};L\phi_{t}}\\
 &  &  & L\left(P';Q'\right)\ar[d]_{L\left(\theta_{c};\phi_{c}\right)}\ar[r]_{\varphi_{P',Q'}} & LP';LQ'\ar[d]^{L\theta_{c};L\phi_{c}}\\
L\left(P'';Q'\right)\ar[r]_{\varphi_{P'',Q''}} & LP'';LQ'' &  & L\left(P'';Q'\right)\ar[r]_{\varphi_{P'',Q''}} & LP'';LQ''
}
\]
it suffices to check that the top square in the right diagram commutes.
This is since the bottom square on the right commutes by naturality
of the constraint data $\varphi$ with respect to $\overline{L}$.
To prove the commutativity of this square, it will be helpful to decompose
further into
\[
\xymatrix@R=1.5em{L\left(P;Q\right)\ar[d]_{L\left(\theta_{t};Q\right)}\ar[rr]^{\varphi_{P,Q}} &  & LP;LQ\ar[d]^{L\theta_{t};LQ}\\
L\left(P';Q\right)\ar[d]_{L\left(P';\phi_{t}\right)}\ar[rr]|-{\varphi_{P',Q}} &  & LP';LQ\ar[d]^{LP';L\phi_{t}}\\
L\left(P';Q'\right)\ar[rr]_{\varphi_{P',Q'}} &  & LP';LQ'
}
\]
and so we need only prove naturality for whiskerings of triangle morphisms.

We first check the naturality condition for right whiskerings of triangle
morphisms. The whiskering of such an $x$ is constructed as the induced
map $x'$ into the pullback as in the diagram below
\[
\xymatrix@=1em{\bullet\ar[dd]_{\ell'}\ar@/^{0.7pc}/[drrr]^{p_{1}'}\\
 & \bullet\ar@{-}[d]_{\ell}\ar[dd]\ar[rr]|-{p_{2}'}\ar@{..>}[ul]|-{x'} &  & \bullet\ar[d]_{e}\ar[rr] &  & \bullet\ar[dd]\\
\bullet\ar[dd]_{s_{1}}\ar@/^{0.7pc}/[drr]^{p_{1}} &  & \ar@{}[]|-{\textnormal{pb}\;\;} & \bullet\ar[rd]\ar[ld]_{u'} & \ar@{}[]|-{\textnormal{dpb}}\\
 & E\ar[r]^{p_{2}}\ar[ld]_{s_{2}}\ar[ul]|-{x} & B\ar[rd]^{t} & \ar@{}[]|-{\textnormal{pb}} & M\ar[r]^{q}\ar[ld]_{u} & N\ar[rd]^{v}\\
I &  &  & J &  &  & K
}
\]
The naturality condition then amounts to checking that
\[
\xymatrix{ & \bullet\ar[r]^{F_{\Pi}p_{1}'}\ar@{}[rd]|-{\Downarrow\mathfrak{b}} & \bullet &  &  & \bullet\ar[r]^{F_{\Pi}p_{1}'}\ar@{}[rd]|-{\Downarrow\mathfrak{m}}\ar[d]|-{F_{\Delta}x'} & \bullet\\
\bullet\ar@/^{0.7pc}/[ur]^{F_{\Delta}s_{1}\ell'}\ar@/_{0.7pc}/[rd]_{F_{\Delta}s_{2}}\ar[r]|-{F_{\Delta}s_{1}} & \bullet\ar[r]|-{F_{\Pi}p_{1}}\ar[u]|-{F_{\Delta}\ell'}\ar@{}[rd]|-{\Downarrow\mathfrak{m}}\ar[d]|-{F_{\Delta}x} & \bullet\ar[u]_{F_{\Delta}u'e} & = & \bullet\ar@/^{0.7pc}/[ur]^{F_{\Delta}s_{1}\ell'}\ar@/_{0.7pc}/[rd]_{F_{\Delta}s_{2}}\ar@{}[r]|-{\Downarrow\alpha} & \bullet\ar@{}[rd]|-{\Downarrow\mathfrak{b}}\ar[r]|-{F_{\Pi}p_{2}'} & \bullet\ar@{=}[u]\\
 & \bullet\ar[r]_{F_{\Pi}p_{2}} & \bullet\ar@{=}[u] &  &  & \bullet\ar[r]_{F_{\Pi}p_{2}}\ar[u]|-{F_{\Delta}\ell} & \bullet\ar[u]_{F_{\Delta}u'e}
}
\]
which is similar to the calculation in \eqref{mbinterchange}.

We now check left whiskerings of triangle morphisms, which is significantly
more complicated than the above situation. To simplify this calculation,
we consider only simpler triangle morphisms of the form $x\colon\left(u_{1},1,1\right)\to\left(u_{2},x,1\right)$.
It will turn out that it suffices to consider only these simpler triangle
morphisms.

To construct the left whiskering of the triangle morphism $x$ by
a polynomial we first construct the two relevant composites of polynomials
as below
\[
\xymatrix@=1.5em{\bullet\ar[rrr]|-{p_{1}'}\ar@/_{0.5pc}/[dddr]|-{\ell'_{1}} &  &  & \bullet\ar@/^{0.2pc}/[rdd]|-{t_{1}'}\ar@/_{0.2pc}/[lddd]|-{u_{1}'}\ar[rrr]|-{\textnormal{id}} &  &  & \bullet\ar@/^{0.7pc}/[dddl]|-{\;t_{1}'}\\
 & \bullet\ar[dd]|-{\ell'_{2}}\ar[rr]|-{p_{2}'} &  & \bullet\ar[d]|-{e}\ar[rr]|-{k} &  & \bullet\ar[dd]|-{r_{2}'}\\
 &  &  & \bullet\ar[rd]|-{t_{2}'}\ar[ld]|-{u_{2}'} & \bullet\ar@/_{0.5pc}/[ldd]|-{u_{1}}\ar@/^{0.3pc}/[rd]|-{\textnormal{id}}\\
 & E\ar[r]|-{p}\ar[ld]|-{s} & B\ar[rd]|-{t} &  & M\ar[r]|-{x}\ar[ld]|-{u_{2}}\ar[u]|-{x} & V\ar[rd]|-{\textnormal{id}}\\
I &  &  & J &  &  & V
}
\]
Now since we have a factorization of pullbacks as below
\[
\xymatrix{\bullet\ar[r]|-{y}\ar[d]|-{t_{2}'} & \bullet\ar[d]|-{t_{1}'}\ar[r]|-{u_{1}'} & \bullet\ar[d]|-{t}\ar@{}[rrd]|-{=} &  & \bullet\ar[rr]|-{u_{2}'}\ar[d]|-{t_{2}'} &  & \bullet\ar[d]|-{t}\\
\bullet\ar[r]|-{x} & \bullet\ar[r]|-{u_{1}} & \bullet &  & \bullet\ar[rr]|-{u_{2}} &  & \bullet
}
\]
it follows that we have an induced cartesian morphism of polynomials
\[
\xymatrix@=1.5em{\bullet\ar[rrr]|-{h}\ar@/_{0.5pc}/[dddr]|-{j}\ar@{..>}[rd]|-{f} &  &  & \bullet\ar@/^{0.2pc}/[rddd]|-{t_{2}'}\ar@/_{0.2pc}/[lddd]|-{u_{2}'}\ar[rrr]|-{y}\ar@{..>}[d]|-{m} &  &  & \bullet\ar@{..>}[ld]|-{g}\ar@/^{0.7pc}/[dddl]|-{\;t_{1}'}\\
 & \bullet\ar[dd]|-{\ell'_{2}}\ar[rr]|-{p_{2}'} &  & \bullet\ar[d]|-{e}\ar[rr]|-{k} &  & \bullet\ar[dd]|-{r_{2}'}\\
 &  &  & \bullet\ar[rd]|-{t_{2}'}\ar[ld]|-{u_{2}'}\\
 & E\ar[r]|-{p}\ar[ld]|-{s} & B\ar[rd]|-{t} &  & M\ar[r]|-{x}\ar[ld]|-{u_{2}} & V\ar[rd]|-{\textnormal{id}}\\
I &  &  & J &  &  & V
}
\]
where $h$ and $j$ are the pullback of $p$ with $u_{2}'$. We then
give the factorization of pullbacks
\[
\xymatrix{\bullet\ar[d]|-{h}\ar[r]|-{\alpha} & \bullet\ar[d]|-{p_{1}'}\ar[r]|-{\ell_{1}'} & \bullet\ar@{}[rrd]|-{=}\ar[d]|-{p} &  & \bullet\ar[d]|-{h}\ar[rr]|-{j} &  & \bullet\ar[d]|-{p}\\
\bullet\ar[r]|-{y} & \bullet\ar[r]|-{u_{1}'} & \bullet &  & \bullet\ar[rr]|-{u_{2}'} &  & \bullet
}
\]
and see the morphism of polynomials resulting from this whiskering
is given by
\[
\xymatrix{ & \bullet\ar[ld]_{s\ell_{1}'}\ar[r]^{p_{1}'} & \bullet\ar[rd]^{t_{1}'}\\
\bullet & \bullet\ar[d]_{f}\ar[r]|-{yh}\ar[u]^{\alpha} & \bullet\ar[d]^{g}\ar@{=}[u] & \bullet\\
 & \bullet\ar[r]_{kp_{2}'}\ar[ul]^{s\ell_{2}'} & \bullet\ar[ru]_{r_{2}'}
}
\]
The naturality condition then follows from seeing that, where $z=u_{1}t_{1}'=tu_{1}'$,
\[
\xymatrix{ &  &  & \;\ar@{}[d]|-{\cong}\\
 & \bullet\ar[r]|-{F_{\Pi}p_{1}'}\ar@/^{2.5pc}/[rrrr]|-{F_{\Pi}p_{1}'} & \bullet\ar[rr]|-{\textnormal{id}}\ar[rd]|-{F_{\Sigma}z} & \;\ar@{}[d]|-{\Downarrow\eta_{Fz}} & \bullet\ar[r]|-{F_{\Pi}\textnormal{id}} & \bullet\ar[rd]|-{F_{\Sigma}t_{1}'}\\
\bullet\ar[ur]|-{F_{\Delta}s\ell_{1}'}\ar[rd]|-{F_{\Delta}s}\ar@{}[r]|-{\Downarrow} & \;\ar@{}[r]|-{\Downarrow\mathfrak{b}} & \ar@{}[r]|-{\Downarrow} & \bullet\ar[rd]|-{F_{\Delta}u_{1}}\ar@{}[r]|-{\Downarrow}\ar[ur]|-{F_{\Delta}z}\ar@/_{1pc}/[ddr]|-{F_{\Delta}u_{2}} & \;\ar@{}[r]|-{\Downarrow\mathfrak{b}} & \;\ar@{}[r]|-{\Downarrow} & \bullet\\
 & \bullet\ar[r]|-{F_{\Pi}p}\ar[uu]|-{F_{\Delta}\ell_{1}'} & \bullet\ar[ur]|-{F_{\Sigma}t}\ar[uu]|-{F_{\Delta}u_{1}'} & \ar@{}[r]|-{\;\;\;\Downarrow} & \bullet\ar[uu]|-{F_{\Delta}t_{1}'}\ar[r]|-{F_{\Pi}\textnormal{id}}\ar[d]|-{F_{\Delta}x}\ar@{}[rd]|-{\Downarrow\mathfrak{m}} & \bullet\ar[uu]|-{F_{\Delta}t_{1}'}\ar[ur]|-{F_{\Sigma}\textnormal{id}}\ar@{=}[d] & \ar@{}[l]|-{\Downarrow\;\;\;}\\
 &  &  &  & \bullet\ar[r]_{F_{\Pi}x} & \bullet\ar@/_{1pc}/[uur]|-{F_{\Sigma}\textnormal{id}}
}
\]
is equal to the pasting, using an analogue of \eqref{mbinterchange},
\[
\xymatrix{ &  &  & \;\ar@{}[d]|-{\cong}\\
 & \bullet\ar[r]|-{F_{\Pi}p_{1}'}\ar@/^{2.5pc}/[rrrr]|-{F_{\Pi}p_{1}'} & \bullet\ar[rr]|-{\textnormal{id}}\ar[rd]|-{F_{\Sigma}z} & \;\ar@{}[d]|-{\Downarrow\eta_{Fz}} & \bullet\ar[r]|-{F_{\Pi}\textnormal{id}}\ar[d]|-{F_{\Delta}y}\ar@{}[rd]|-{\Downarrow\mathfrak{m}} & \bullet\ar[rd]|-{F_{\Sigma}t_{1}'}\\
\bullet\ar[ur]|-{F_{\Delta}s\ell_{1}'}\ar[rd]|-{F_{\Delta}s}\ar@{}[r]|-{\Downarrow} & \;\ar@{}[r]|-{\Downarrow\mathfrak{b}} & \ar@{}[r]|-{\Downarrow} & \bullet\ar[ur]|-{F_{\Delta}z}\ar@/_{0pc}/[dr]|-{F_{\Delta}u_{2}}\ar@{}[r]|-{\;\Downarrow} & \bullet\ar[r]|-{F_{\Pi}y}\ar@{}[dr]|-{\Downarrow\mathfrak{b}} & \bullet\ar@{=}[u]\ar@{}[r]|-{\Downarrow} & \bullet\\
 & \bullet\ar[r]|-{F_{\Pi}p}\ar[uu]|-{F_{\Delta}\ell_{1}'} & \bullet\ar[ur]|-{F_{\Sigma}t}\ar[uu]|-{F_{\Delta}u_{1}'} &  & \bullet\ar[r]_{F_{\Pi}x}\ar[u]|-{F_{\Delta}t_{2}'} & \bullet\ar@/_{0pc}/[ur]|-{F_{\Sigma}\textnormal{id}}\ar[u]|-{F_{\Delta}t_{1}'}
}
\]
which is equal to, where $d=u_{2}t_{2}'e=tu_{2}'e$, noting $dm=zy$
and $u_{2}'=u_{1}'y$,
\[
\xymatrix{ &  &  & \ar@{}[d]|-{\cong}\\
 & \bullet\ar@/^{2.5pc}/[rrrr]|-{F_{\Pi}p_{1}'}\ar[d]|-{F_{\Delta}\alpha}\ar[r]|-{F_{\Pi}p_{1}'}\ar@{}[rd]|-{\Downarrow\mathfrak{b}^{-1}} & \bullet\ar[rr]|-{\textnormal{id}}\ar[d]|-{F_{\Delta}y}\ar@{}[rdr]|-{=} &  & \bullet\ar[r]|-{F_{\Pi}\textnormal{id}}\ar[d]|-{F_{\Delta}y}\ar@{}[rd]|-{\Downarrow\mathfrak{m}} & \bullet\ar@{=}[d]\ar@/^{0.8pc}/[rdd]|-{F_{\Sigma}t_{1}'}\\
 & \bullet\ar[r]|-{F_{\Pi}h}\ar@{}[rd]|-{\Downarrow\mathfrak{b}} & \bullet\ar[rr]|-{\textnormal{id}}\ar@{}[rdr]|-{=} & \; & \bullet\ar[r]|-{F_{\Pi}y}\ar@{}[rd]|-{\Downarrow\mathfrak{b}} & \bullet & \ar@{}[dl]|-{\Downarrow}\\
\bullet\ar[r]|-{F_{\Delta}s\ell_{2}'}\ar@/_{0.8pc}/[rdd]|-{F_{\Delta}s}\ar@{}[rd]|-{\Downarrow}\ar@/^{0.8pc}/[ruu]|-{F_{\Delta}s\ell_{1}'}\ar@{}[ru]|-{\Downarrow} & \bullet\ar[r]|-{F_{\Pi}p_{2}'}\ar[u]|-{F_{\Delta}f} & \bullet\ar[rr]|-{\textnormal{id}}\ar[rd]|-{F_{\Sigma}d}\ar[u]|-{F_{\Delta}m} & \;\ar@{}[d]|-{\Downarrow\eta_{Fd}} & \bullet\ar[r]|-{F_{\Pi}k}\ar[u]|-{F_{\Delta}m} & \bullet\ar[r]|-{F_{\Sigma}r_{2}'}\ar[u]|-{F_{\Delta}g} & \bullet\\
 & \;\ar@{}[r]|-{\Downarrow\mathfrak{b}} & \ar@{}[r]|-{\Downarrow} & \bullet\ar[rd]|-{F_{\Delta}u_{2}}\ar@{}[r]|-{\Downarrow}\ar[ur]|-{F_{\Delta}d} & \;\ar@{}[r]|-{\Downarrow\mathfrak{b}} & \;\ar@{}[ur]|-{\Downarrow}\\
 & \bullet\ar[r]|-{F_{\Pi}p}\ar[uu]|-{F_{\Delta}\ell_{2}'} & \bullet\ar[ur]|-{F_{\Sigma}t}\ar[uu]|-{F_{\Delta}u_{2}'e} &  & \bullet\ar[uu]|-{F_{\Delta}t_{2}'e}\ar[r]|-{F_{\Pi}x} & \bullet\ar[uu]|-{F_{\Delta}r_{2}'}\ar@/_{0.8pc}/[uru]|-{F_{\Sigma}\textnormal{id}}
}
\]
finally resulting in 
\[
\xymatrix{ &  &  & \ar@{}[d]|-{\Downarrow\mathfrak{m}}\\
 & \bullet\ar@/^{2.5pc}/[rrrr]|-{F_{\Pi}p_{1}'}\ar[d]|-{F_{\Delta}\alpha} &  & \ar@{}[d]|-{\Downarrow\mathfrak{b}} &  & \bullet\ar@{=}[d]\ar@/^{0.8pc}/[rdd]|-{F_{\Sigma}t_{1}'}\\
 & \bullet\ar@/^{2.5pc}/[rrrr]|-{F_{\Pi}yh} &  & \;\ar@{}[d]|-{\cong} &  & \bullet & \ar@{}[dl]|-{\Downarrow}\\
\bullet\ar[r]|-{F_{\Delta}s\ell_{2}'}\ar@/_{0.8pc}/[rdd]|-{F_{\Delta}s}\ar@{}[rd]|-{\Downarrow}\ar@/^{0.8pc}/[ruu]|-{F_{\Delta}s\ell_{1}'}\ar@{}[ru]|-{\Downarrow} & \bullet\ar[r]|-{F_{\Pi}p_{2}'}\ar@/^{2.5pc}/[rrrr]|-{F_{\Pi}kp_{2}'}\ar[u]|-{F_{\Delta}f} & \bullet\ar[rr]|-{\textnormal{id}}\ar[rd]|-{F_{\Sigma}d} & \;\ar@{}[d]|-{\Downarrow\eta_{Fd}} & \bullet\ar[r]|-{F_{\Pi}k} & \bullet\ar[r]|-{F_{\Sigma}r_{2}'}\ar[u]|-{F_{\Delta}g} & \bullet\\
 & \;\ar@{}[r]|-{\Downarrow\mathfrak{b}} & \ar@{}[r]|-{\Downarrow} & \bullet\ar[rd]|-{F_{\Delta}u_{2}}\ar@{}[r]|-{\Downarrow}\ar[ur]|-{F_{\Delta}d} & \;\ar@{}[r]|-{\Downarrow\mathfrak{b}} & \;\ar@{}[ur]|-{\Downarrow}\\
 & \bullet\ar[r]|-{F_{\Pi}p}\ar[uu]|-{F_{\Delta}\ell_{2}'} & \bullet\ar[ur]|-{F_{\Sigma}t}\ar[uu]|-{F_{\Delta}u_{2}'e} &  & \bullet\ar[uu]|-{F_{\Delta}t_{2}'e}\ar[r]|-{F_{\Pi}x} & \bullet\ar[uu]|-{F_{\Delta}r_{2}'}\ar@/_{0.8pc}/[uru]|-{F_{\Sigma}\textnormal{id}}
}
\]
Now, we wish to prove the naturality condition for any left whiskering
of a general triangle morphism, written $P;\theta_{t}$. This can
be written as $P;\left(\theta_{x};R\right)$ for a simpler triangle
morphism $x$ as above, and so we are trying to show region (1) commutes
below (suppressing associators in $\mathscr{C}$)
\[
\xymatrix{L\left(P;\left(\left(u_{1},1,1\right);R\right)\right)\myar{\varphi}{r}\ar[d]_{L\left(P;\left(\theta_{x};R\right)\right)}\ar@{}[rd]|-{\left(1\right)} & LP;L\left(\left(u_{1},1,1\right);R\right)\myar{\varphi}{r}\ar[d]|-{LP;L\left(\theta_{x};R\right)}\ar@{}[rd]|-{\left(2\right)} & LP;L\left(u_{1},1,1\right);LR\ar[d]^{LP;L\theta_{x};LR}\\
L\left(P;\left(\left(u_{2},x,1\right);R\right)\right)\myard{\varphi}{r} & LP;L\left(\left(u_{2},x,1\right);R\right)\myard{\varphi}{r} & LP;L\left(u_{2},x,1\right);LR
}
\]
We now note that for the commutativity of (1) it suffices to prove
the outside diagram above commutes, as both constraints $\varphi$
are invertible in region (2) by gregariousness, and region (2) is
known to commute as naturality for right whiskerings has been shown.

As associativity of the constraints has been verified, this is the
same as showing that the outside of 
\[
\xymatrix{L\left(\left(P;\left(u_{1},1,1\right)\right);R\right)\myar{\varphi}{r}\ar[d]_{L\left(\left(P;\theta_{x}\right);R\right)} & L\left(P;\left(u_{1},1,1\right)\right);LR\myar{\varphi}{r}\ar[d]|-{L\left(P;\theta_{x}\right);LR} & LP;L\left(u_{1},1,1\right);LR\ar[d]^{LP;L\theta_{x};LR}\\
L\left(\left(P;\left(u_{2},x,1\right)\right);R\right)\myard{\varphi}{r} & L\left(P;\left(u_{2},x,1\right)\right);LR\myard{\varphi}{r} & LP;L\left(u_{2},x,1\right);LR
}
\]
commutes. But the left square commutes as naturality with respect
to right whiskerings is known for both triangle and cartesian morphisms,
and the right square above commutes by naturality of left whiskerings
of $\theta_{x}$. This gives the result.
\end{proof}
We now have enough to complete the proof of Theorem \ref{unipolythm}.
\begin{proof}
[Proof of Theorem \ref{unipolythm}] We consider the assignment of
Theorem \ref{unipolythm}, i.e. composition with the embedding $\left(-\right)_{\Sigma}\colon\mathcal{E}\to\mathbf{Poly}\left(\mathcal{E}\right)$
written as the assignment
\[
\xymatrix{\mathbf{Poly}\left(\mathcal{E}\right)\ar@/_{1pc}/[rr]_{G}\ar@/^{1pc}/[rr]^{F} & \Downarrow\alpha & \mathscr{C} & \mapsto & \mathcal{E}\ar@/_{1pc}/[rr]_{G_{\Sigma}}\ar@/^{1pc}/[rr]^{F_{\Sigma}} & \Downarrow\alpha_{\Sigma} & \mathscr{C}}
\]

We start by proving the first universal property.

\emph{\noun{Well defined.}} That each icon is invertible is seen by
restricting to spans and applying Corollary \ref{spaniconinv}.

\noun{Fully faithful.} That the assignment $\alpha\mapsto\alpha_{\Sigma}$
is injective follows from the necessary commutativity of
\[
\xymatrix{F\left(s,p,t\right)\myar{\varphi}{r}\ar[d]_{\alpha_{\left(s,p,t\right)}} & F\left(s,1,1\right);F\left(1,p,1\right);F\left(1,1,t\right)\ar[d]^{\alpha_{\left(s,1,1\right)};\alpha_{\left(1,p,1\right)};\alpha_{\left(1,1,t\right)}}\\
G\left(s,p,t\right)\myard{\psi}{r} & G\left(s,1,1\right);G\left(1,p,1\right);G\left(1,1,t\right)
}
\]
where $\varphi$ and $\psi$ are invertible by gregariousness, and
the identities $\alpha_{\Delta_{f}}=\left(\alpha_{\Sigma_{f}}^{-1}\right)^{*}$
and $\alpha_{\Pi_{f}}=\left(\alpha_{\Delta_{f}}^{-1}\right)^{*}$
forced by Lemma \ref{mateinv}. For surjectivity, one need only check
any collection $\alpha$ consisting of 2-cells
\[
\alpha_{s,p,t}\colon F\left(s,p,t\right)\to G\left(s,p,t\right)
\]
satisfying these properties defines an icon. As composition is the
same in $\mathbf{Poly}_{c}\left(\mathcal{E}\right)$, the compatibility
of the collection $\alpha$ with the oplax constraint cells is the
same calculation as in Section \ref{unipolycart}. Thus one need only
check local naturality of $\alpha$. As local naturality with respect
to the cartesian morphisms is already known, one need only consider
triangle morphisms. But local naturality with respect to triangle
morphism is almost the same calculation as in the case of spans; this
is expected as the triangle morphisms arise from the canonical embedding
$\left(-\right)_{\Delta\Pi}\colon\mathbf{Span}^{\textnormal{co}}\left(\mathcal{E}\right)\to\mathbf{Poly}\left(\mathcal{E}\right)$.

\noun{Essentially surjective}. Given any 2-Beck pseudofunctor $F\colon\mathcal{E}\to\mathscr{C}$
we take the gregarious functor $L\colon\mathbf{Poly}\left(\mathcal{E}\right)\to\mathscr{C}$
from Proposition \ref{polyLoplax} and note that $L_{\Sigma}=F$. 

We now deduce the second universal property.

\noun{Restrictions}. The second property is a restriction of the first.
Indeed, given a pseudofunctor $L\colon\mathbf{Poly}\left(\mathcal{E}\right)\to\mathscr{C}$
the corresponding pseudofunctor $L_{\Sigma}\colon\mathcal{E}\to\mathscr{C}$
satisfies the distributivity condition since $L_{\Sigma}$ is also
the restriction of the pseudofunctor $\overline{L}\colon\mathbf{Poly}_{c}\left(\mathcal{E}\right)\to\mathscr{C}$.
Moreover, given a 2-Beck pseudofunctor $F\colon\mathcal{E}\to\mathscr{C}$
which satisfies the distributivity condition, the corresponding map
$\mathbf{Poly}\left(\mathcal{E}\right)\to\mathscr{C}$ is pseudo since
the map $\mathbf{Poly}_{c}\left(\mathcal{E}\right)\to\mathscr{C}$
(with the same constraint data) arising from the cartesian Beck triple
is pseudo.
\end{proof}

\section{Acknowledgments}

The author is grateful to his supervisor Richard Garner for feedback
on earlier versions of this paper, as well as the anonymous referee
for their helpful suggestions. In addition, the author gratefully
acknowledges the support of an Australian Government Research Training
Program Scholarship. 

\bibliographystyle{siam}
\bibliography{references}

\end{document}